\newlist{myEnumerate}{enumerate}{5}
\setlist[myEnumerate,1]{label=(\arabic*)}
\setlist[myEnumerate,2]{label=(\alph*),
ref=\themyEnumeratei(\alph*)}
\setlist[myEnumerate,3]{label=(\roman*),
ref=\themyEnumerateii(\roman*)}
\setlist[myEnumerate,4]{label=(\Alph*),
ref=\themyEnumerateiii(\Alph*)}
\setlist[myEnumerate,5]{label=(\Roman*),
ref=\themyEnumerateiv(\Roman*)}
\DeclareMathAlphabet{\mathbbb}{U}{bbold}{m}{n}
\tikzset{%
	>={Latex[width=2mm,length=2mm]},
	base/.style = {rectangle, rounded corners, draw=black,
		minimum width=2cm, minimum height=1cm,
		text centered, font=\sffamily},
	activityStarts/.style = {base, fill=blue!30},
	startstop/.style = {base, fill=red!30},
	activityRuns/.style = {base, fill=green!30},
	process/.style = {base, minimum width=2.5cm, fill=orange!15,
		font=\ttfamily},
}
\newtheorem{defi}{\textbf{Definition}}
\newtheorem{thom}{\textbf{Theorem}}
\newtheorem{rek}{\textbf{Remark}}
\newtheorem{lema}{\textbf{Lemma}}
\newtheorem{example}{\textbf{Example}}
\DeclareMathOperator*{\Supp}{\textup{Supp}}
\DeclareMathOperator*{\inft}{\textup{inft}}
\newcommand*{\I}{\textup{I}}
\newcommand*{\p}{\textup{p}}
\newcommand*{\init}{\textup{init}}
\newcommand*{\SA}{\textup{SA}}
\newcommand*{\ISA}{\textup{ISA}}
\newcommand*{\D}{\textup{D}}
\newcommand*{\g}{\textup{g}}
\begin{document}

\title{On the Detection of Markov Decision Processes}
\author{Xiaoming Duan, Yagiz Savas, Rui Yan, Zhe Xu, and Ufuk Topcu
	\thanks{Xiaoming Duan is with the Oden Institute for Computational Engineering and Sciences, The University of Texas at Austin, Austin, TX, 78712, USA. email: {\tt\small xiaomingduan.zju@gmail.com}.}
	\thanks{Yagiz Savas and Ufuk Topcu are with the Department of Aerospace Engineering and Engineering Mechanics, The University of Texas at Austin, Austin, TX, 78712, USA. email: {\tt\small\{yagiz.savas, utopcu\}@utexas.edu}.}
	\thanks{Rui Yan is with the Department of Computer Science, University of Oxford, Oxford OX1 3QD, UK. email: {\tt\small rui.yan@cs.ox.ac.uk}.}
	\thanks{Zhe Xu is with the School for Engineering of Matter, Transport, and Energy, Arizona State University, Tempe, AZ, 85287, USA. email: {\tt\small xzhe1@asu.edu}.
	}
}

\maketitle

\IEEEpeerreviewmaketitle
\begin{abstract}
We study the detection problem for a finite set of Markov decision processes (MDPs) where the MDPs have the same state and action spaces but possibly different probabilistic transition functions. Any one of these MDPs could be the model for some underlying controlled stochastic process, but it is unknown a priori which MDP is the ground truth. We investigate whether it is possible to asymptotically detect the ground truth MDP model perfectly based on a single observed history (state-action sequence). Since the generation of histories depends on the policy adopted to control the MDPs, we discuss the existence and synthesis of policies that allow for perfect detection. We start with the case of two MDPs and establish a necessary and sufficient condition for the existence of policies that lead to perfect detection. Based on this condition, we then develop an algorithm that efficiently (in time polynomial in the size of the MDPs) determines the existence of policies and synthesizes one when they exist. We further extend the results to the more general case where there are more than two MDPs in the candidate set, and we develop a policy synthesis algorithm based on the breadth-first search and recursion. We demonstrate the effectiveness of our algorithms through numerical examples.
\end{abstract}

\begin{IEEEkeywords}
Markov decision processes, decision making, asymptotic detection, policy synthesis, algorithm design
\end{IEEEkeywords}

\section{Introduction}
\paragraph*{Problem description and motivation}
We consider a finite set of Markov decision processes (MDPs) with the same state and action spaces but potentially different transition functions. These MDPs are candidate models for some controlled stochastic process of interest, but it is unknown which MDP is the ground truth model a priori. We study the detection problem where the goal is to identify the ground truth MDP model through the observed state-action sequence under some policy. The detectability of the MDP model depends crucially on the differences among the transition functions of candidate MDPs and the policy applied in the generation of the state-action sequence. We focus on the scenario where the candidate MDP models are fixed and given, and we can fully observe the states and actions. Our aim is to synthesize a policy or decide that such a policy does not exist, using which we can successfully detect the ground truth MDP model no matter which one it is in the candidate set. 

MDPs are a widely adopted formalism to model sequential decision-making processes under uncertainties~\cite{MLP:14}. The detection problem for MDPs studied in this paper is relevant in applications such as medical decision-making~\cite{LNS-DLK-BTD:21}, active intrusion detection~\cite{BW-MA-SB-UT:19}, and recommendation systems~\cite{KC-MC-DK-PN-AR:20}. In an MDP-based recommendation system~\cite{GS-DH-RIB:05, KC-MC-DK-PN-AR:20}, the MDPs model different types of customer behavior depending on their characteristics (e.g., gender or age). The states encode the customers' past purchase histories of finite length, and the actions are the items to be selected for recommendation. The recommendation system may help provide recommendations tailored to customers by 
first identifying the customer type based on the customers' purchase history and their reactions to the recommendations.

\paragraph*{Literature review} Our work has close connections with a few different topics in various areas.

{\bf{Multi-model MDPs}}: In the literature, there are several names for the model considered in this paper: hidden model MDPs \cite{IC-JC-TGM-SN-RS-OB:12}, multi-task reinforcement learning \cite{EB-LL:13},  multiple-environment MDPs~\cite{JR-OS:14}, contextual MDPs~\cite{AH-DDC-SM:15}, multi-scenario MDPs and concurrent MDPs~\cite{PB-DS:19}, latent MDPs~\cite{JK-YE-CC-SM:21}, and multi-model MDPs~\cite{LNS-DLK-BTD:21}. The authors in \cite{IC-JC-TGM-SN-RS-OB:12} model the adaptive management problems in conservation biology and natural resources management using a hidden model MDP. The authors first show that the planning problem for a finite-horizon hidden model MDP is PSPACE-complete. Then, they develop tailored, efficient algorithms for hidden model MDPs based on general-purpose algorithms for partially observable MDPs (POMDPs). Multiple-environment MDPs first appear in~\cite{JR-OS:14}, where the authors study the strategy synthesis problems for achieving reachability, safety, or parity objectives in all the MDPs that constitute the multiple-environment MDP.  Although multiple-environment MDPs can be reformulated as general POMDPs, which are computationally intractable~\cite{KC-MC-MT:16}, the authors show that many qualitative strategy synthesis problems for them can be solved efficiently, at least in the binary case where there are two MDPs in the multiple-environment MDP. In a recent study~\cite{KC-MC-DK-PN-AR:20}, the authors consider multiple-environment MDPs as a particular case of POMDPs and mixed-observability MDPs. They exploit the structure of multiple-environment MDPs to improve the computational efficiency of general-purpose algorithms for POMDPs. We note here that one key difference between multiple-environment MDPs and POMDPs is that the unobservable state in multiple-environment MDPs, which corresponds to the identity of the ground truth MDP model in the candidate set, does not change with time. Control of multi-model MDPs has been studied in~\cite{PB-DS:19} and \cite{LNS-DLK-BTD:21}, where the authors develop algorithms to construct a single policy that maximizes a weighted sum of discounted rewards for the candidate MDPs in the finite and infinite horizon, respectively. In the finite-horizon case~ \cite{LNS-DLK-BTD:21}, the authors study both history-dependent and Markovian policies and show that deterministic policies are sufficient. In the infinite-horizon case~\cite{PB-DS:19}, the authors focus on the stationary Markovian policies and show that randomization can be strictly more beneficial. Both problems are shown to be NP-hard and solved via mixed-integer programming. Finally, the works~\cite{EB-LL:13, AH-DDC-SM:15, JK-YE-CC-SM:21} consider the learning problem for latent MDPs, where various algorithms are designed to minimize the regret against a learner that knows the ground truth MDP model in episodic settings. 

In this paper, we synthesize policies for detecting the ground truth MDP model asymptotically for multi-model MDPs (MMDPs). The authors in~\cite{BW-MA-SB-UT:19} formulate a similar problem as a general POMDP and study cost-bounded policies. However, the intrinsic detectability issue has not been addressed. 

{\bf{Detection of Markov chains}}: MDPs are closely related to Markov chains (MCs) in that they turn into (possibly time-varying) MCs once a policy is fixed. Thus, our problem also connects with the detection problems for MCs \cite[Part III]{BCL:08}.

Classical results on the testing and estimation of MCs appear in~\cite{MSB:51, TWA-LAG:57}, where the goodness of fit test and estimation of transition probabilities are developed. The authors in~\cite{DK:78} establish necessary and sufficient conditions on the transition matrices of two MCs that guarantee the asymptotic perfect detection of the MCs based on the generated history. More recently, identity testing of MCs has received considerable attention in the computer science community. The problem is to decide the length of the observation needed to correctly determine whether the observed history comes from a given MC or a different MC that is a certain distance away from the given one with high probability. The authors in~\cite{CD-ND-NG:18} and \cite{YC-PLB:19} study the identity testing of a symmetric MC. The latter paper improves upon the former by making the sample complexity bound independent of the hitting times of MCs. Results on the testing of ergodic MCs recently appear in~\cite{GW-AK:20}.

The work on the detection, estimation and testing of MCs do not directly apply to MDPs since policies of MDPs play essential roles in the detection task. Moreover, there are in general infinitely many MCs that can be induced from an MDP.

{\bf{Uncertain MDPs}}: MMDPs encode a particular class of uncertainty for MDPs by introducing a finite number of transition models. A more general class of uncertainty models for MDPs considers continuous sets of possible transition probabilities. The decision-maker then seeks a policy that optimizes against the worst-case scenario. The authors in the early reference~\cite{JKS-REL:73} study the maxmin and maxmax policies for uncertain MDPs and devise policy-iteration algorithms to solve the problem. The authors in~\cite{CCW-HKE_94} propose efficient numerical algorithms based on successive approximations for uncertainties of transition probabilities described by a finite set of linear inequalities. On the theoretical side, the authors in~\cite{GNI:05} and \cite{AN-LEIG:05} show that if the uncertainties have a particular structure, i.e., satisfying the ``rectangularity" property, then the results on standard MDPs extend to the robust formulation. More recently, the authors in~\cite{WW-DK-BR:13} introduced a relaxed notion of rectangularity and show that the solution methods remain tractable under such a condition.


\paragraph*{Contributions} In this paper, we study the detection problem for MMDPs. Compared with the classical detection problems with passive observations, our formulation features an active policy synthesis component. In fact, the statistical properties of the underlying hypotheses in the MMDP detection problem depends critically on the employed policies, and we need to simultaneously resolve the detectability issue and perform the detection task through the policy design. The main contributions of this paper are as follows.
\begin{enumerate}
\item We formulate an \emph{asymptotic perfect detection} problem for MMDPs and propose to use the so-called \emph{Bhattacharyya coefficient} \cite{AB:46} as a separation measure for MDPs under a policy. We show that the Bhattacharyya coefficient has a monotonicity property with respect to the length of the observation, which provides insights for the policy synthesis problem.
\item We establish a necessary and sufficient condition for the detectability of binary MMDPs. Based on this condition, we develop a polynomial-time algorithm to decide the existence of a policy that achieves asymptotic perfect detection and synthesize a policy when one exists.
\item We extend the binary detection problem results to the general case of more than two MDPs and develop a similar algorithm for policy synthesis based on the breadth-first search and recursion.  
\end{enumerate}

\paragraph*{Organization}
We organize the rest of the paper as follows. Section~\ref{sec:preliminaries} reviews necessary terminologies for MDPs and introduces the notion of asymptotic perfect detection. We then solve the binary detection problem for MMDPs in Section~\ref{sec:binary}. The results are extended to the general case in Section~\ref{sec:general}. We demonstrate the effectiveness of our algorithms through two numerical examples in Section~\ref{sec:numerical}. Section~\ref{sec:conclusion} finally concludes the paper.

\paragraph*{Notation} Let $\mathbb{R}$, $\mathbb{R}^n$ and $\mathbb{R}^{m\times n}$ be the set of real numbers, real vectors of dimension $n$, and real matrices of size $m$ by $n$, respectively. We denote the set of non-negative integers by $\mathbb{N}_{\geq0}$. For $m,n\in\mathbb{N}_{\geq0}$, $\mathbb{N}_{m}^n$ denotes the set of integers $\{m,m+1,\cdots,n\}$ when $ m\leq n$,  and $\mathbb{N}_{m}^n=\emptyset$ when $m>n$. The probability simplex in dimension $n$ is denoted by $\Delta_n$, i.e., $\Delta_n=\{\mathbf{x}\in\mathbb{R}^n\,|\,\sum_{i=1}^n\mathbf{x}_i=1,\mathbf{x}_i\geq0\textup{ for } i\in\mathbb{N}_{1}^n\}$. The vector of $1$'s in dimension $n$ is denoted by $\mathbbb{1}_n$. For a probability mass function $p:\mathbb{N}_{1}^n\to[0,1]$, the support $\textup{Supp}(p)$ of $p$ is defined by $\textup{Supp}(p)=\{i\in\mathbb{N}_{1}^n\,|\,p(i)>0\}$. We denote the cardinality of a finite set $S$ by $|S|$. For two sets $S_1$ and $S_2$, the set difference $S_1\setminus S_2$ contains elements that are in $S_1$ but not in $S_2$. The complement $\overline{S}$ of a subset $S$ of the whole set $\Omega$ is $\overline{S}=\Omega\setminus S$.

\section{Preliminaries}\label{sec:preliminaries}
\subsection{MDP and MMDP}
We first formally define Markov decision processes (MDPs) with finite state and action spaces.
\begin{defi}[MDP]
An MDP $M$ is a tuple $M=(\mathcal{S},\mathcal{A},\delta,s_{\init})$\footnote{The reward function is omitted in the definition as it is irrelevant in our current problem. Moreover, we consider a specific initial state rather than an initial distribution over the state space for ease of exposition.}, where
\begin{enumerate}
	\item $\mathcal{S}$ is a finite set of states;
	\item $\mathcal{A}=\cup_{s\in\mathcal{S}}\mathcal{A}_s$ is the union of the finite sets of actions $\mathcal{A}_s$ available at the state $s\in\mathcal{S}$; 
	\item $\delta:\mathcal{S}\times\mathcal{A}\times\mathcal{S}\to[0,1]$ is the transition kernel defined for all $s\in\mathcal{S}$ and $a\in\mathcal{A}_s$ satisfying
	\begin{equation*}
		\sum_{s'\in\mathcal{S}}\delta(s'\,|\,s,a)=1;
	\end{equation*}
	\item $s_{\init}\in\mathcal{S}$ is the initial state.
\end{enumerate}
\end{defi} 

We will use $\delta(\cdot\,|\, s,a)$ to denote the probability distribution over the next states when taking an action $a\in\mathcal{A}_s$ at a state $s\in\mathcal{S}$. A history $h_t=(s_0,a_0,s_1,a_1,\cdots,s_t)$ of an MDP at step $t\in\mathbb{N}_{\geq0}$ is a sequence of states and actions, where $s_0=s_{\init}$ and for all $\tau\in\mathbb{N}_{0}^{t-1}$, $a_\tau\in\mathcal{A}_{s_\tau}$, $s_{\tau+1}\in\mathcal{S}$, and  $\delta(s_{\tau+1}\,|\,s_{\tau},a_{\tau})>0$. We denote the set of histories at step $t\in\mathbb{N}_{\geq0}$ by $\mathcal{H}_t$. For a history $h_t\in \mathcal{H}_t$ and $\tau\in\mathbb{N}_{0}^{t-1}$, $h_t(\tau)$ and $h_t[\tau]$ denote the state and the action at step $\tau$ in $h_t$, respectively. In particular, $h_t(t)$ is the last state of the history $h_t$. A prefix $h_\tau$ of a history $h_t$ for $\tau\in\mathbb{N}_{0}^t$ is a history that is composed of the first $2\tau+1$ elements of $h_t$.

A history-dependent randomized policy $\bm{\pi}=(\pi_0,\pi_1,\cdots)$ is a sequence of mappings where each $\pi_t$ for $t\in\mathbb{N}_{\geq0}$ is a mapping from the set of histories $\mathcal{H}_t$ to a distribution over actions, i.e., for any $h_t\in \mathcal{H}_t$, we have $\pi_t(h_t)\in\Delta_{|\mathcal{A}_{h_t(t)}|}$. We denote the probability of choosing an action $a\in\mathcal{A}_{h_t(t)}$ at the state $h_t(t)$ by $\pi_t(a\,|\,h_t)$ and the probability distribution over the actions by $\pi_t(\cdot\,|\,h_t)$. A policy $\bm{\pi}$  is deterministic if for any $t\in\mathbb{N}_{\geq0}$ and $h_t\in \mathcal{H}_t$, the mapping $\pi_t$ specifies a distribution over actions whose support contains exactly one element, i.e., $|\textup{Supp}(\pi_t(\cdot\,|\,h_t))|=1$. A policy $\bm{\pi}$ is Markovian (memoryless) if for any $t\in\mathbb{N}_{\geq0}$, the mapping $\pi_t$ depends only on the current state, i.e., for any $h_t\in \mathcal{H}_t$, we have $\pi_t(\cdot\,|\,h_t)=\pi_t(\cdot\,|\,h_t(t))$. A stationary policy $\bm{\pi}$ is a Markovian policy that is time-independent, i.e., $\bm{\pi}=(\pi,\pi,\cdots)$. A history $h_t=(s_0,a_0,s_1,a_1,\cdots,s_t)$ is compatible with a policy $\bm{\pi}$ if for any $\tau\in\mathbb{N}_0^{t-1}$ and any prefix $h_\tau$ of $h_t$, we have $a_\tau\in\textup{Supp}(\pi_{\tau}(\cdot\,|\,h_{\tau}))$. 

We next introduce the maximal end component (MEC) of an MDP, which will be used later in the paper. 
\begin{defi}[MEC {\cite[Section 10.6.3]{CB-JK:08}}]\label{def:MEC}
An end component $C$ of an MDP $M=(\mathcal{S},\mathcal{A},\delta,s_{\init})$ is a tuple $C=(\mathcal{X},\mathcal{U})$ where
\begin{enumerate}[label=(\roman*)]
\item the set of states $\emptyset\neq\mathcal{X}\subset\mathcal{S}$;
\item the set of actions $\mathcal{U}=\cup_{s\in\mathcal{X}}\mathcal{U}_{s}$ with $\mathcal{U}_s\subset\mathcal{A}_{s}$ for all $s\in\mathcal{X}$;
\item\label{itm:nextstates} for all $s\in\mathcal{X}$ and $u\in\mathcal{U}_s$, $\Supp(\delta(\cdot\,|\,s,u))\subset\mathcal{X}$;
\item\label{itm:strongconnect} for every pair of states $s, s'\in\mathcal{X}$ and $s\neq s'$, there exists a sequence of states and actions $(s_0,u_0\cdots,s_{t})$ with $t\geq1$ such that $s_0=s$, $s_t=s'$, and for all $\tau\in\mathbb{N}_0^{t-1}$, $u_\tau\in\mathcal{U}_{s_\tau}$ and $\delta(s_{\tau+1}\,|\,s_\tau,u_\tau)>0$.  
\end{enumerate}
An end component $C=(\mathcal{X},\mathcal{U})$ is maximal in $M$ if there does not exist another end component $C'=(\mathcal{X}',\mathcal{U}')$ such that $C'\neq C$, $\mathcal{X}\subset\mathcal{X}'$ and $\mathcal{U}_{s}\subset\mathcal{U}'_{s}$ for all $s\in\mathcal{X}$.
\end{defi}

For an MDP $M=(\mathcal{S},\mathcal{A},\delta,s_{\init})$, a state-action pair $(s,a)$ for $s\in\mathcal{S}$ and $a\in\mathcal{A}_{s}$ belongs to an end component $C=(\mathcal{X},\mathcal{U})$ of $M$, denoted by $(s,a)\in C$, if $s\in\mathcal{X}$ and $a\in\mathcal{U}_{s}$. A state $s\in\mathcal{S}$ is in the end component $C=(\mathcal{X},\mathcal{U})$, denoted by $s\in C$, if $s\in\mathcal{X}$. With a slight abuse of notation, we sometimes refer to the set of states in an end component $C$ simply by $C$. We denote the set of MECs of an MDP $M$ by $\mathcal{C}(M)$, which is unique and can be computed efficiently, e.g., see~\cite[Algorithm 47]{CB-JK:08} and improved algorithms in~\cite{KC-MH:11}. With all the terminologies for MDPs in place, we are now ready to define a multi-model MDP (MMDP).
\begin{defi}[MMDP]\label{def:MMDP}
	An MMDP $\mathcal{M}$ is a set of MDPs $\mathcal{M}=\{M_i\}_{i\in\mathbb{N}_{1}^N}$, where all the MDPs in $\mathcal{M}$ have the same state space, action space and initial condition, but possibly different transition kernels, i.e., for all $i\in\mathbb{N}_{1}^N$, $M_i=(\mathcal{S},\mathcal{A}, \delta_i,s_{\init})$.
\end{defi}

When controlling an MMDP $\mathcal{M}$, we do not know which MDP in $\mathcal{M}$ governs the transition dynamics a priori. Our task is to synthesize policies for $\mathcal{M}$ so that we can perfectly detect the ground truth MDP based on a single observed history.

\subsection{Asymptotically perfect detection}\label{subsec:APD}
In order to formalize the detection problem for MMDPs, we adopt the framework of Bayesian detection. In particular, in this section,  
we follow and adapt the development of asymptotic perfect detection (APD) in~\cite[Section II]{DK:78} and \cite{TTK-LAS:67}. Let $o_t=(y_0,y_1,\cdots,y_t)$ be a discrete-time observation sequence up to time $t\in\mathbb{N}_{\geq0}$ where $y_{\tau}\in\mathbb{R}^n$ for $\tau\in\mathbb{N}_{0}^t$, and $f_t(o_t)$ and $g_t(o_t)$ be the probability density functions (PDFs) of $o_t$ under hypotheses $H_1$ and $H_2$, respectively. Suppose $q$ and $1-q$ for $q\in(0,1)$ are the estimated prior probabilities for $H_1$ and $H_2$, then the maximum a posteriori (MAP) detection rule gives that 
\begin{equation}\label{eq:MAP}
\begin{cases}
\textup{decide~}H_1,\quad \textup{if~} \frac{f_t(o_t)}{g_t(o_t)}\geq\frac{1-q}{q},\\
\textup{decide~}H_2,\quad \textup{if~} \frac{f_t(o_t)}{g_t(o_t)}<\frac{1-q}{q}.
\end{cases}	
\end{equation}
If the true prior probabilities for ${H}_1$ and ${H}_2$ are $\theta$ and $1-\theta$ for $\theta\in(0,1)$, respectively, then the probability of error $P_{\textup{error}}(t,q,\theta)$ for the MAP rule~\eqref{eq:MAP} is given by
\begin{multline}\label{eq:proberror}
P_{\textup{error}}(t,q,\theta)=\theta \int f_t(o_t)\mathbf{1}_{\{\frac{f_t(o_t)}{g_t(o_t)}\leq\frac{1-q}{q}\}}(o_t)do_t
	\\+ (1-\theta) \int g_t(o_t)\mathbf{1}_{\{\frac{f_t(o_t)}{g_t(o_t)}\geq\frac{1-q}{q}\}}(o_t)do_t,
\end{multline}
where $\mathbf{1}_{\{\cdot\}}(\cdot)$ is the indicator function. We say that APD is achieved for $H_1$ and $H_2$ when the probability of error $P_{\textup{error}}(t,q,\theta)$ approaches zero for any $q\in(0,1)$ and $\theta\in(0,1)$ as $t$ approaches infinity. Let the Bhattacharyya coefficient (BC) $B(t)$ between the PDFs $f_t(o_t)$ and $g_t(o_t)$ be
\begin{equation*}
B(t)=\int\sqrt{f_t(o_t)\cdot g_t(o_t)}do_t.
\end{equation*}
We present the bounds on the probability of error and a necessary and sufficient condition for APD in terms of the BC in the following lemma.

\begin{lema}[Bounds on the probability of error and necessary and sufficient condition for APD {\cite[Eq. (3)]{TTK-LAS:67}},{\cite[Eq. (4)]{ DK:78}}]\label{lema:APDgeneral}
	Let $o_t=(y_0,y_1,\cdots,y_t)$ be the observation sequence, and $f_t(o_t)$ and $g_t(o_t)$ be the PDFs of $o_t$ under hypotheses ${{H}}_1$ and ${{H}}_2$, respectively. Then, the probability of error $P_{\textup{error}}(t,q,\theta)$ defined in~\eqref{eq:proberror} for the MAP rule~\eqref{eq:MAP} with $q\in(0,1)$ and $\theta\in(0,1)$ satisfies\footnote{The upper bound of the probability of error is slightly different from the ones in \cite{TTK-LAS:67} and \cite{ DK:78} since we consider here the  case where the estimated prior $q$ and the true prior $\theta$ are not necessarily equal to each other.}
	\begin{multline}\label{eq:boundsonB}
\frac{1}{2}\min\{\theta,1-\theta\}B(t)^2\leq P_{\textup{error}}(t,q,\theta)\\\leq\max\{\sqrt{\frac{1-q}{q}}\theta,\sqrt{\frac{q}{1-q}}(1-\theta)\}B(t).
	\end{multline}
	Moreover, the probability of error		
	 $\lim_{t\rightarrow\infty}P_{\textup{error}}(t,q,\theta)=0$ if and only if 
	\begin{equation}\label{eq:bd}
		\lim_{t\rightarrow\infty} B(t)=0.
	\end{equation}
\end{lema}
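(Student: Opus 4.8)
The plan is to establish the two-sided bound~\eqref{eq:boundsonB} first, after which the equivalence~\eqref{eq:bd} follows immediately by a squeeze argument. I would treat the upper and lower bounds separately, since they rely on different elementary inequalities applied pointwise to the likelihood ratio $f_t(o_t)/g_t(o_t)$.

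For the upper bound, let $A=\{o_t : f_t(o_t)/g_t(o_t)\le (1-q)/q\}$ be the region on which the MAP rule~\eqref{eq:MAP} errs when $H_1$ holds, and let $B=\overline{A}$ be the complementary region on which it errs when $H_2$ holds. On $A$ the pointwise inequality $f_t\le\sqrt{(1-q)/q}\,\sqrt{f_t g_t}$ holds, since there $\sqrt{f_t}\le\sqrt{(1-q)/q}\,\sqrt{g_t}$; symmetrically, on $B$ one has $g_t\le\sqrt{q/(1-q)}\,\sqrt{f_t g_t}$. Substituting these into the two integrals defining $P_{\textup{error}}$ in~\eqref{eq:proberror}, factoring out $\theta\sqrt{(1-q)/q}$ and $(1-\theta)\sqrt{q/(1-q)}$ and bounding each by their maximum, and finally using that $A$ and $B$ partition the sample space so that $\int_A\sqrt{f_t g_t}\,do_t+\int_B\sqrt{f_t g_t}\,do_t=B(t)$, yields the claimed upper bound. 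Tracking $q$ and $\theta$ as distinct quantities here is precisely the modification over~\cite{TTK-LAS:67,DK:78} flagged in the footnote.

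For the lower bound, the key observation is that the MAP rule using the estimated prior $q$ can only perform worse than the Bayes-optimal rule using the true prior $\theta$; hence $P_{\textup{error}}(t,q,\theta)\ge\int\min\{\theta f_t(o_t),(1-\theta)g_t(o_t)\}\,do_t\ge\min\{\theta,1-\theta\}\int\min\{f_t,g_t\}\,do_t$. It then remains to prove $\int\min\{f_t,g_t\}\,do_t\ge\tfrac12 B(t)^2$, which I would obtain from Cauchy--Schwarz: writing $\sqrt{f_t g_t}=\sqrt{\min\{f_t,g_t\}}\cdot\sqrt{\max\{f_t,g_t\}}$ gives $B(t)\le(\int\min\{f_t,g_t\}\,do_t)^{1/2}(\int\max\{f_t,g_t\}\,do_t)^{1/2}$, and since $\int\min\{f_t,g_t\}\,do_t+\int\max\{f_t,g_t\}\,do_t=\int(f_t+g_t)\,do_t=2$, the second factor is at most $\sqrt 2$, so $B(t)^2\le 2\int\min\{f_t,g_t\}\,do_t$. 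Chaining these inequalities gives the lower bound.

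Finally, the equivalence~\eqref{eq:bd} is immediate from~\eqref{eq:boundsonB}: since $\theta\in(0,1)$ forces $\min\{\theta,1-\theta\}>0$, the lower bound shows that $P_{\textup{error}}(t,q,\theta)\to0$ implies $B(t)^2\to0$, while the upper bound, whose prefactor is a constant independent of $t$, shows that $B(t)\to0$ implies $P_{\textup{error}}(t,q,\theta)\to0$. I expect the main obstacle to be the lower bound --- specifically, justifying the reduction to the Bayes-optimal error and the Cauchy--Schwarz step relating $\int\min\{f_t,g_t\}\,do_t$ to $B(t)^2$ --- whereas the upper bound is a direct pointwise estimate on each error region.
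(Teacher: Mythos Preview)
Your argument is correct on all three parts. The paper, however, does \emph{not} supply its own proof of this lemma: it is stated with citations to \cite{TTK-LAS:67} and \cite{DK:78} and treated as a known result, so there is no in-paper proof to compare against. As corroboration, the paper's proof of the multi-hypothesis generalisation (Lemma~\ref{lema:BCgeneral}, Appendix~\ref{appendix:BCgeneral}) uses essentially the same ingredients you propose --- a pointwise bound over the decision regions for the upper bound, and a Cauchy--Schwarz argument for the lower bound --- though there Cauchy--Schwarz is applied region-wise to $\int_R\sqrt{f_if_j}$ rather than via your $\sqrt{\min}\cdot\sqrt{\max}$ factorisation; the two routes are equivalent in strength for the binary case.
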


\begin{rek}[Immunity to biased estimated priors]
From~\eqref{eq:boundsonB}, we notice that even if there is a  mismatch between the estimated prior $q$ and the true prior $\theta$, the probability of error vanishes as long as the BC goes to zero. In other words, condition~\eqref{eq:bd} for APD is immune to biased estimated priors.
\end{rek}

Note that the BC is related to the perhaps more popular and well-known Hellinger distance for probability distributions~\cite{TTK-LAS:67}. We will use the BC to derive conditions for policies that achieve APD for MMDPs, where the observation in the case of MMDPs is the state-action sequence under the employed policy.

\subsection{Problem of interest}
We are interested in the APD of MMDPs. Specifically, given an MMDP $\mathcal{M}$, we develop algorithms that decide the existence of a policy that allows us to asymptotically perfectly detect the ground truth MDP in $\mathcal{M}$ based on the generated state-action sequence. Moreover, the algorithms compute such a policy when one exists. We will mainly use condition~\eqref{eq:bd} for APD in our later analysis and design.


\section{Detection of Binary MMDPs}\label{sec:binary}
In contrast to passively collecting the observation sequence generated according to candidate distributions in classical hypothesis testing tasks, in the case of MMDPs, we have the flexibility of actively taking actions at each state and observing the consequent transitions. Therefore, APD for an MMDP $\mathcal{M}$ depends crucially on the structural properties of the MDPs in $\mathcal{M}$ as well as the applied policy. In this section, we focus on the binary case where the MMDP $\mathcal{M}$ consists of two MDPs.

\subsection{Properties of the BC for binary MMDPs}\label{sec:propertiesofBC}
In MMDPs, we observe the history generated by one of the candidate MDPs, i.e., the observation sequence $o_t$ in Section~\ref{subsec:APD} becomes $h_t=(s_0,a_0,\cdots,s_t)$. Given a binary MMDP $\mathcal{M}=\{M_1,M_2\}$ and a policy $\bm{\pi}$, the BC $B(t,\bm{\pi})$ for $\mathcal{M}$ at step $t\in\mathbb{N}_{\geq0}$ under the policy $\bm{\pi}$ is then defined by 
\begin{equation}\label{eq:Bdistance}
B(t,\bm{\pi}) = \sum_{h_t\in \mathcal{H}_t}\sqrt{\mathbb{P}_1^{\bm{\pi}}(h_t)\mathbb{P}_2^{\bm{\pi}}(h_t)},
\end{equation}
where $\mathcal{H}_t$ is the union of the sets of histories of $M_1$ and $M_2$,  and $\mathbb{P}_1^{\bm{\pi}}(h_t)$ and $\mathbb{P}_2^{\bm{\pi}}(h_t)$ are the probabilities that $h_t$ occurs in $M_1$ and $M_2$ under the policy $\bm{\pi}$, respectively. We first establish useful properties of $B(t,\bm{\pi})$ for any given policy $\bm{\pi}$ in the following lemma.

\begin{lema}[Monotonicity and convergence property]\label{lema:monotone}
Given a binary MMDP $\mathcal{M}=\{M_1,M_2\}$ and a policy $\bm{\pi}$, let $B(t,\bm{\pi})$ be the BC defined in~\eqref{eq:Bdistance}. Then the following statements hold:
\begin{enumerate}[label=(\roman*)]
\item\label{itm:monotone} $B(t,\bm{\pi})$ is monotonically non-increasing, i.e.,  for all $t\in\mathbb{N}_{\geq0}$,	$B(t+1,\bm{\pi})\leq B(t,\bm{\pi})$;
\item\label{itm:convergence}  the limit $\lim_{t\rightarrow\infty}B(t,\bm{\pi})$ exists.
\end{enumerate}
\end{lema}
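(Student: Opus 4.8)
The plan is to establish part \ref{itm:monotone} by factoring $B(t+1,\bm{\pi})$ through $B(t,\bm{\pi})$, and then to obtain part \ref{itm:convergence} immediately from the monotone convergence theorem for real sequences.

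For monotonicity, the starting point is that every history $h_{t+1}\in\mathcal{H}_{t+1}$ arises from its length-$t$ prefix $h_t$ by appending one action $a_t\in\mathcal{A}_{h_t(t)}$ and one successor state $s_{t+1}\in\mathcal{S}$. Because the policy $\bm{\pi}$ is chosen independently of which MDP governs the transitions, the one-step update factors as
\begin{equation*}
\mathbb{P}_i^{\bm{\pi}}(h_{t+1})=\mathbb{P}_i^{\bm{\pi}}(h_t)\,\pi_t(a_t\,|\,h_t)\,\delta_i(s_{t+1}\,|\,h_t(t),a_t)
\end{equation*}
for $i\in\{1,2\}$. Taking the geometric mean of these two identities, the common policy factor $\pi_t(a_t\,|\,h_t)$ survives unsquared, and summing over $h_{t+1}$ --- equivalently, over all prefixes $h_t\in\mathcal{H}_t$, actions $a_t$, and successors $s_{t+1}$ (combinations invalid in both MDPs contribute zero and may be appended freely) --- expresses $B(t+1,\bm{\pi})$ as $\sum_{h_t}\sqrt{\mathbb{P}_1^{\bm{\pi}}(h_t)\mathbb{P}_2^{\bm{\pi}}(h_t)}$ weighted by the inner factor $\sum_{a_t}\pi_t(a_t\,|\,h_t)\sum_{s_{t+1}}\sqrt{\delta_1(s_{t+1}\,|\,h_t(t),a_t)\,\delta_2(s_{t+1}\,|\,h_t(t),a_t)}$.

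The crux --- and the only nontrivial estimate --- is bounding this inner factor by $1$. By the Cauchy--Schwarz inequality together with the normalization $\sum_{s'}\delta_i(s'\,|\,s,a)=1$, one has $\sum_{s'}\sqrt{\delta_1(s'\,|\,s,a)\,\delta_2(s'\,|\,s,a)}\leq1$ for every state--action pair $(s,a)$; summing this against the distribution $\pi_t(\cdot\,|\,h_t)$, which also has total mass $1$, keeps the inner factor at most $1$. Substituting the bound yields $B(t+1,\bm{\pi})\leq\sum_{h_t}\sqrt{\mathbb{P}_1^{\bm{\pi}}(h_t)\mathbb{P}_2^{\bm{\pi}}(h_t)}=B(t,\bm{\pi})$, which is \ref{itm:monotone}.

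Part \ref{itm:convergence} is then immediate: each summand in \eqref{eq:Bdistance} is a square root of a product of non-negative probabilities, so $B(t,\bm{\pi})\geq0$ for all $t\in\mathbb{N}_{\geq0}$. A non-increasing sequence that is bounded below converges, hence $\lim_{t\to\infty}B(t,\bm{\pi})$ exists. I anticipate no genuine obstacle: the argument rests entirely on setting up the prefix factorization correctly --- in particular, on observing that the policy factor is shared across the two MDPs and therefore enters linearly rather than under the square root --- and on recognizing the Cauchy--Schwarz bound on the one-step transition overlap, after which the remaining manipulations are routine bookkeeping.
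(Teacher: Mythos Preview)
Your proposal is correct and follows essentially the same approach as the paper: both factor $B(t+1,\bm{\pi})$ through the length-$t$ prefix, pull out the common policy weight linearly, bound the one-step overlap $\sum_{s'}\sqrt{\delta_1\delta_2}\leq 1$ via Cauchy--Schwarz, and then invoke the monotone convergence of a non-increasing sequence bounded below by zero.
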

\begin{proof}
	Regarding~\ref{itm:monotone}, for $t\in\mathbb{N}_{\geq0}$, we expand $B(t+1,\bm{\pi})$ and obtain
\begin{align*}
	\begin{split}
B(t+1,\bm{\pi})&=\sum_{h_{t+1}\in \mathcal{H}_{t+1}}\sqrt{\mathbb{P}_1^{\bm{\pi}}(h_{t+1})\mathbb{P}_2^{\bm{\pi}}(h_{t+1})}\\
&=\sum_{h_{t}\in \mathcal{H}_{t}}\sqrt{\mathbb{P}_1^{\bm{\pi}}(h_{t})\mathbb{P}_2^{\bm{\pi}}(h_{t})}\big(\sum_{a\in\mathcal{A}_{h_t(t)}}\pi_t(a\,|\,h_t)\\
&\quad\cdot\sum_{s\in\mathcal{S}}\sqrt{\delta_1(s\,|\,h_t(t),a)\delta_2(s\,|\,h_t(t),a)}\big)\\
&\leq\sum_{h_{t}\in \mathcal{H}_{t}}\sqrt{\mathbb{P}_1^{\bm{\pi}}(h_{t})\mathbb{P}_2^{\bm{\pi}}(h_{t})}\big(\sum_{a\in\mathcal{A}_{h_t(t)}}\pi_t(a\,|\,h_t)\big)\\
&=\sum_{h_{t}\in \mathcal{H}_{t}}\sqrt{\mathbb{P}_1^{\bm{\pi}}(h_{t})\mathbb{P}_2^{\bm{\pi}}(h_{t})}=B(t,\bm{\pi}),
	\end{split}
\end{align*}
where the inequality follows from the Cauchy-Schwarz inequality and the fact that $\delta_1(\cdot\,|\,h_t(t),a)$ and $\delta_2(\cdot\,|\,h_t(t),a)$ are probability distributions, and the second to the last equality follows from the fact that $\pi_t(\cdot\,|\,h_t)$ is a probability distribution.

Regarding~\ref{itm:convergence}, note that $B(t,\bm{\pi})$ is lower bounded by zero. Then, the convergence of $B(t,\bm{\pi})$ follows from~\ref{itm:monotone} and the monotone convergence theorem \cite[Theorem 2.4.2]{SA:15}.
\end{proof}

From the proof of Lemma~\ref{lema:monotone}, we notice that $B(t+1,\bm{\pi})$ strictly decreases compared to $B(t,\bm{\pi})$ if and only if there exists at least one history $h_{t}\in \mathcal{H}_{t}$ with $\sqrt{\mathbb{P}_1^{\bm{\pi}}(h_{t})\mathbb{P}_2^{\bm{\pi}}(h_{t})}>0$ and an action $a\in\Supp(\pi_t(\cdot\,|\,h_t))$ such that the transition functions $\delta_1(\cdot\,|\,h_t(t),a)$ and $\delta_2(\cdot\,|\,h_t(t),a)$ are different. This observation is consistent with our intuition that in order to distinguish $M_1$ and $M_2$ and make $B(t,\bm{\pi})$ vanish, we should select a policy $\bm{\pi}$ under which the histories of the MDPs are statistically different.

When the length of the history is infinite, there are uncountably many possible histories and the summation in~\eqref{eq:Bdistance} should be interpreted as an integral. Specifically, let $(\mathcal{H},\mathcal{Q})$ be the measurable space where $\mathcal{H}$ is the sample space consisting of all possible infinite histories of $M_1$ and $M_2$ and $\mathcal{Q}$ is the smallest $\sigma$-algebra generated by the cylinder sets of $\mathcal{H}$ \cite{RBA-CAD:99}. Then, we have
\begin{align}\label{eq:BdistanceLeb}
	B(\bm{\pi})=\lim_{t\rightarrow\infty}B(t,\bm{\pi})=\int_{h\in \mathcal{H}}\sqrt{\mathcal{P}_1^{\bm{\pi}}(d h)\cdot\mathcal{P}^{\bm{\pi}}_2(dh)},
\end{align}
where $\mathcal{P}_i^{\bm{\pi}}$ for $i\in\{1,2\}$ is the probability measure induced by the transition kernel $\delta_i$ and the policy $\bm{\pi}$ over the measurable space $(\mathcal{H},\mathcal{Q})$. Instead of dealing with the integral~\eqref{eq:BdistanceLeb} directly, we will later work with an equivalent condition on the probability measures $\mathcal{P}_1^{\bm{\pi}}$ and $\mathcal{P}_2^{\bm{\pi}}$ such that $B(\bm{\pi})=0$.


\subsection{Informative states and state-action pairs}
As discussed in Section~\ref{sec:propertiesofBC}, the BC decreases with the length of the observation only when state-action pairs that satisfy certain properties appear in the histories of the MDPs generated under the policy. 
 This observation motivates us to define the following notions of \emph{informative} and \emph{revealing} states, actions and state-action pairs.

\begin{defi}[Informative and revealing states, actions and state-action pairs]\label{def:informative_state_action}
Given a binary MMDP $\mathcal{M}=\{M_1,M_2\}$, 
\begin{enumerate}[label=(\roman*)]
\item for a state $s\in\mathcal{S}$ and an action $a\in\mathcal{A}_s$, the pair $(s,a)$ is informative if $\delta_1(\cdot\,|\,s,a)\neq\delta_2(\cdot\,|\,s,a)$ and $\Supp(\delta_1(\cdot\,|\,s,a))\cap\Supp(\delta_2(\cdot\,|\,s,a))\neq\emptyset$; the pair $(s,a)$ is revealing if $\Supp(\delta_1(\cdot\,|\,s,a))\cap\Supp(\delta_2(\cdot\,|\,s,a))=\emptyset$;

\item a state $s\in\mathcal{S}$ is revealing if there exists an action $a\in\mathcal{A}_s$ such that $(s,a)$ is revealing, and the corresponding action is a revealing action; a state $s\in\mathcal{S}$ is informative if it is not revealing and there exists an action $a\in\mathcal{A}_s$ such that $(s,a)$ is informative, and the corresponding action is informative.
\end{enumerate} 
\end{defi}

We denote the set of informative state-action pairs in an MMDP $\mathcal{M}$ by $\ISA$, i.e., $\ISA=\{(s,a)\,|\,s\in\mathcal{S},a\in\mathcal{A}_{s}, (s,a)\textup{ is informative in } \mathcal{M}\}$. We illustrate the concepts in Definition~\ref{def:informative_state_action} via the following example.

\begin{example}[Illustrations of informative and revealing states, actions and state-action pairs]\label{exam:informative_revealing}
Consider a binary MMDP $\mathcal{M}=\{M_1,M_2\}$, where the transition diagram of the MDPs in $\mathcal{M}$ is shown in Fig.~\ref{fig:informative_state_action}. 
 \begin{figure}[http]
	\centering
	\begin{tikzpicture}
		\node[state,minimum size = 0.2cm] at (0, 0) (nodeone) {1};
		\node[state,minimum size = 0.2cm] at (2.5, 1.25)     (nodetwo)     {2};
		\node[state,minimum size = 0.2cm] at (5, 2.25)     (nodefive)     {5};	
		\node[state,minimum size = 0.2cm] at (5, 0.25)     (nodesix)     {6};	
		\node[state,minimum size = 0.2cm] at (2.5, -1.25)     (nodethree)     {3};
		\node[state,minimum size = 0.2cm] at (5, -1.25)     (nodefour)     {4};
		
		\node[state,minimum size = 0.2cm] at (7.5, 2.25)     (nodeseven)     {7};	
		
		\draw[every loop,
		auto=right,
		>=latex,
		]
		(nodeone) edge[bend left=0, auto=left] node {$ \delta_{i}(2\,|\,1,a^1)$} (nodetwo)									
		
		(nodetwo) edge[bend left=0, auto=left] node {$ \delta_{i}(5\,|\,2,\cdot)$} (nodefive)									
		(nodetwo) edge[bend left=0, auto=left] node {$ \delta_{i}(6\,|\,2,\cdot)$} (nodesix)									
		
		(nodetwo)     edge[loop below] node {$\delta_i(2\,|\,2,\cdot)$} (nodetwo)	
		(nodethree)     edge[bend left=15, auto=left] node {$\frac{1}{2}$} (nodefour)	
		(nodefour) edge[bend left=15, auto=left] node {$\frac{1}{2}$} (nodethree)						
		(nodeone) edge[bend left=0, auto=right] node {$\delta_i(3\,|\,1,a^1)$} (nodethree)						
		(nodethree)     edge[loop left] node {$\frac{1}{2}$} (nodethree)			
		(nodefour)     edge[loop right] node {$\frac{1}{2}$} (nodefour)

		(nodefive)     edge[loop above] node {$\delta_i(5\,|\,5,\cdot)$} (nodefive)	
		(nodefive)     edge[bend left=0, auto=left] node {$\delta_i(7\,|\,5,\cdot)$} (nodeseven)

		(nodesix)     edge[loop right] node {$1$} (nodesix)				
		(nodeseven)     edge[loop above] node {$1$} (nodeseven)					
		;
	\end{tikzpicture}
	\caption{The transition diagram of the MDPs in a binary MMDP $\mathcal{M}$ where there are seven states and the labels on edges represent transition probabilities between states.}\label{fig:informative_state_action}
\end{figure}
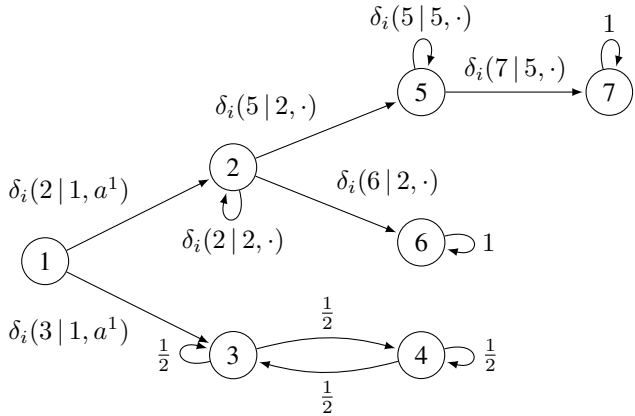

There are seven states in the state space $\mathcal{S}=\mathbb{N}_{1}^7$, and the directed edges connecting different states represent possible transitions between states after taking respective actions. We label the edges out of a state with specific transition probabilities if only one action is available at that state and the transition probabilities are the same in $M_1$ and $M_2$. We specify the rest of the transition kernels in Table~\ref{tb:tk}.
\begin{table}[http]
\centering
\begin{tabular}{|c|c|c|c|c|}
	\hline
	State&Action&Next state&$M_1$&$M_2$\\ 
	\hline
	\multirow{2}{*}{1}&\multirow{2}{*}{$a^1$}&2&0.7&0.4\\
	&&3&0.3&0.6\\\hhline{|=|=|=|=|=|}		
	\multirow{6}{*}{2}&\multirow{3}{*}{$a^2$}&2&0.2&0.2\\
	&&5&0.3&0.3\\
	&&6&0.5&0.5\\\cline{2-5}
	&\multirow{3}{*}{$b^2$}&2&0.5&0.5\\
	&&5&0.5&0\\
	&&6&0&0.5	\\\hhline{|=|=|=|=|=|}
	\multirow{4}{*}{5}&\multirow{2}{*}{$a^5$}&5&0.7&0.3\\
	&&7&0.3&0.7\\\cline{2-5}
	&\multirow{2}{*}{$b^5$}&5&1&0\\
	&&7&0&1\\\hline
\end{tabular}		
\caption{Transition kernels of the MDPs shown in Fig~\ref{fig:informative_state_action}}\label{tb:tk}
\end{table}

By Definition~\ref{def:informative_state_action}, the states $1$ and $2$ and actions $a^1$ and $b^2$ are informative, and $\ISA=\{(1,a^1),(2,b^2)\}$; the state $5$, action $b^5$ and the state-action pair $(5,b^5)$ are revealing.
\end{example}

The revealing and informative states and state-action pairs play an important role in the detection of MMDPs. At a revealing state $s\in\mathcal{S}$, the underlying MDP in an MMDP can be immediately determined by taking a revealing action $a\in\mathcal{A}_s$ and observing the consequent transition. On the other hand, the informative state-action pairs in $\ISA$ repeatedly appearing in histories make the BC decrease over time. The following lemma shows that we can focus on revealing actions at revealing states without loss of generality.

\begin{lema}[Actions at revealing states]\label{lema:executingrevealing}
Given a binary MMDP $\mathcal{M}=\{M_1,M_2\}$, let $\mathcal{S}_{\textup{r}}\subset\mathcal{S}$ be the set of revealing states in $\mathcal{M}$. For any policy $\bm{\pi}$, let $\bm{\pi}'$ is a policy such that for all $t\in\mathbb{N}_{\geq0}$ and $h_t\in \mathcal{H}_t$,
\begin{enumerate}
\item if $h_t(t)\notin\mathcal{S}_{\textup{r}}$, then $\bm{\pi}'(h_t)=\bm{\pi}(h_t)$;
\item if $h_t(t)\in\mathcal{S}_{\textup{r}}$, then $\bm{\pi}'(a\,|\,h_t)=1$ for some revealing action $a\in\mathcal{A}_{h_t(t)}$.
\end{enumerate}
Then,
\begin{equation}\label{eq:atrevealing}
B(\bm{\pi}')\leq B(\bm{\pi}).
\end{equation}
\end{lema}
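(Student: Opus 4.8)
The plan is to prove the finite-horizon inequality $B(t,\bm{\pi}')\leq B(t,\bm{\pi})$ for every $t\in\mathbb{N}_{\geq0}$ and then pass to the limit, which is legitimate because Lemma~\ref{lema:monotone}\ref{itm:convergence} guarantees that both limits exist and \eqref{eq:BdistanceLeb} identifies them with $B(\bm{\pi}')$ and $B(\bm{\pi})$. The guiding intuition is that a revealing action at a revealing state drives $M_1$ and $M_2$ into disjoint sets of successor states, so as soon as a history passes through such a state the cross-probability $\sqrt{\mathbb{P}_1^{\bm{\pi}'}\mathbb{P}_2^{\bm{\pi}'}}$ collapses to zero. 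Thus $\bm{\pi}'$ simply discards part of the mass that $\bm{\pi}$ keeps, and a sum of nonnegative Bhattacharyya summands taken over a smaller index set can only be smaller.

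Concretely, I would first partition $\mathcal{H}_t$ according to whether a history visits a revealing state \emph{strictly before} its final step. Let $\mathcal{H}_t^{\textup{safe}}=\{h_t\in\mathcal{H}_t:h_t(\tau)\notin\mathcal{S}_{\textup{r}}\text{ for all }\tau\in\mathbb{N}_0^{t-1}\}$, and let the remaining histories form $\mathcal{H}_t^{\textup{rev}}$. The first key step is to show that every $h_t\in\mathcal{H}_t^{\textup{rev}}$ contributes zero to $B(t,\bm{\pi}')$. Indeed, if $h_t(\tau)\in\mathcal{S}_{\textup{r}}$ for some $\tau<t$, then by construction $\bm{\pi}'$ places all probability on a revealing action $a$ at $h_t(\tau)$; since $\Supp(\delta_1(\cdot\,|\,h_t(\tau),a))\cap\Supp(\delta_2(\cdot\,|\,h_t(\tau),a))=\emptyset$, the successor $h_t(\tau+1)$ lies in the support of at most one of $\delta_1,\delta_2$, so one factor in the product defining $\mathbb{P}_i^{\bm{\pi}'}(h_t)$ vanishes and $\sqrt{\mathbb{P}_1^{\bm{\pi}'}(h_t)\mathbb{P}_2^{\bm{\pi}'}(h_t)}=0$.

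The second key step is to show that on $\mathcal{H}_t^{\textup{safe}}$ the two policies induce identical path probabilities, $\mathbb{P}_i^{\bm{\pi}'}(h_t)=\mathbb{P}_i^{\bm{\pi}}(h_t)$ for $i\in\{1,2\}$. This follows by writing each probability as $\prod_{\tau=0}^{t-1}\pi_\tau(a_\tau\,|\,h_\tau)\,\delta_i(s_{\tau+1}\,|\,s_\tau,a_\tau)$, where $s_\tau=h_t(\tau)$ and $a_\tau=h_t[\tau]$, and observing that every action along a safe history is committed at a non-revealing state, where $\bm{\pi}'$ and $\bm{\pi}$ agree by construction; since every prefix of a safe history is again safe, the agreement holds at each factor. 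Combining the two steps gives $B(t,\bm{\pi}')=\sum_{h_t\in\mathcal{H}_t^{\textup{safe}}}\sqrt{\mathbb{P}_1^{\bm{\pi}}(h_t)\mathbb{P}_2^{\bm{\pi}}(h_t)}$, a sum of nonnegative terms over a subset of the index set of $B(t,\bm{\pi})$, whence $B(t,\bm{\pi}')\leq B(t,\bm{\pi})$. Letting $t\rightarrow\infty$ then yields \eqref{eq:atrevealing}.

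I expect the only delicate point to be the bookkeeping in the first step: making precise that ``visiting a revealing state before the final step'' is exactly the event on which $\bm{\pi}'$ forces disjoint supports, and that it is the index strictly below $t$ that matters, since no action is committed at the terminal state $h_t(t)$ of $h_t$. Everything else is a routine consequence of the multiplicative form of the history probabilities and the nonnegativity of the Bhattacharyya summands.
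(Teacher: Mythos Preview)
Your proposal is correct and follows essentially the same approach as the paper: partition histories according to whether a revealing state is visited, observe that the Bhattacharyya summand under $\bm{\pi}'$ vanishes on the ``revealing'' part while the two policies coincide on the remaining histories, and pass to the limit. Your treatment is in fact slightly more careful than the paper's, since you correctly distinguish whether the revealing state occurs strictly before step $t$ (where an action is committed) versus only at the terminal position $h_t(t)$; the paper's proof glosses over this bookkeeping.
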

\begin{proof}
Note that for any history $h_t\in \mathcal{H}_t$, if $h_t$ does not contain any revealing state $s\in\mathcal{S}_{\textup{r}}$, then $\sqrt{\mathbb{P}_1^{\bm{\pi}}(h_t)\mathbb{P}_2^{\bm{\pi}}(h_t)}=\sqrt{\mathbb{P}_1^{\bm{\pi}'}(h_t)\mathbb{P}_2^{\bm{\pi}'}(h_t)}$. However, if $h_t$ contains a revealing state $s\in\mathcal{S}_{\textup{r}}$, then $\sqrt{\mathbb{P}_1^{\bm{\pi}'}(h_t)\mathbb{P}_2^{\bm{\pi}'}(h_t)}=0$ and $\sqrt{\mathbb{P}_1^{\bm{\pi}}(h_t)\mathbb{P}_2^{\bm{\pi}}(h_t)}\geq0$. Therefore, we have that $B(t,\bm{\pi}')\leq B(t,\bm{\pi})$ for all $t\in\mathbb{N}_{\geq0}$, which leads to~\eqref{eq:atrevealing} when we take the limit $t\rightarrow\infty$.
\end{proof}

\subsection{Preprocessing of MMDPs}\label{sec:preprocessing}
By Lemma~\ref{lema:executingrevealing}, at a revealing state in a binary MMDP, we can safely ignore all other actions but one that is revealing. Moreover, for the detection of MMDPs, we can terminate the detection process immediately when identity-revealing transitions (transitions that are possible in precisely one of the MDPs) are observed. In order to simplify the analysis and policy synthesis in later sections, we propose to preprocess the MDPs in a binary MMDP in this subsection. The preprocessing removes all but one revealing action at a revealing state, introduces two special terminal states indicating successful detection, and directs identity-revealing transitions to those special states in respective MDPs. Specifically, given a binary MMDP $\mathcal{M}=\{M_1,M_2\}$ where $M_i=(\mathcal{S},\mathcal{A},\delta_i,{s}_{\init})$ for $i\in\{1,2\}$, a preprocessed MMDP $\mathcal{M}^{\p}=\{M_1^{\p},M_2^{\p}\}$ consists of MDPs $M_i^{\p}=(\mathcal{S}^{\p},\mathcal{A}^{\p},\delta_i^{\p},{s}_{\init})$ for $i\in\{1,2\}$ satisfying
\begin{enumerate}
	\item $\mathcal{S}^{\p}=\mathcal{S}\cup\{\bot_1,\bot_2
	\}$;
\item $\mathcal{A}^{\p}=\cup_{s\in\mathcal{S}}\mathcal{A}^{\p}_{s}\cup\mathcal{A}_{\bot_1}\cup\mathcal{A}_{\bot_2}$ where for $s\in\mathcal{S}$, $\mathcal{A}^{\p}_s=\mathcal{A}_s$ if $s$ is not revealing, $\mathcal{A}^{\p}_s=\{a\}$ if $s$ and $a\in\mathcal{A}_s$ are revealing, and $\mathcal{A}_{\bot_1}=\{a^{\bot_1}\}$ and $\mathcal{A}_{\bot_2}=\{a^{\bot_2}\}$ are the actions available at states $\bot_1$ and $\bot_2$, respectively; 
\item If $(s,a)$ is neither revealing nor informative  for $s\in\mathcal{S}$ and $a\in\mathcal{A}_s$, then 
$\delta_i^{\p}(s'\,|\,s,a)=\delta_i(s'\,|\,s,a)$ for all $s'\in\mathcal{S}$; if $s\in\mathcal{S}$ is revealing, then $\delta_i^{\p}(\bot_i\,|\,s,a)=1$ where $a\in\mathcal{A}^{\p}_s$; if $(s,a)$ is informative for $s\in\mathcal{S}$ and $a\in\mathcal{A}_s$, then $\delta_{i}^{\p}(s'\,|\,s,a)=\delta_{i}(s'\,|\,s,a)$ for $s'\in\Supp(\delta_{i}(\cdot\,|\,s,a))\cap\Supp(\delta_{3-i}(\cdot\,|\,s,a))$ and $\delta_{i}^{\p}(\bot_i\,|\,s,a)=\sum_{s'\in \Supp(\delta_{i}(\cdot\,|\,s,a))\setminus\Supp(\delta_{3-i}(\cdot\,|\,s,a))}\delta_{i}(s'\,|\,s,a)$; finally, $\delta_i^{\p}(\bot_j\,|\,\bot_j,a^{\bot_j})=1$ for $j\in\{1,2\}$.
\end{enumerate}

The preprocessed MMDP $\mathcal{M}^{\p}=\{M_1^{\p},M_2^{\p}\}$ of $\mathcal{M}$ is a valid MMDP since it satisfies Definition~\ref{def:MMDP}. We show the preprocessed MDP $M_1^{\p}$ of $M_1$ from Example~\ref{exam:informative_revealing} in Fig.~\ref{fig:preprocessed}. 
 \begin{figure}[http]
	\centering
	\begin{tikzpicture}
		\node[state,minimum size = 0.2cm] at (0, 0) (nodeone) {1};
		\node[state,minimum size = 0.2cm] at (2.5, 1.25)     (nodetwo)     {2};
		\node[state,minimum size = 0.2cm] at (5, 2.25)     (nodefive)     {5};	
		\node[state,minimum size = 0.2cm] at (5, 0.25)     (nodesix)     {6};	
		\node[state,minimum size = 0.2cm] at (2.5, -1.25)     (nodethree)     {3};
		\node[state,minimum size = 0.2cm] at (5, -1.25)     (nodefour)     {4};
		
		\node[state,minimum size = 0.2cm] at (7.5, 2.25)     (nodeseven)     {7};	
		
		\node[state,minimum size = 0.1cm] at (1.5, 4.5)     (nodeabs1)     {$\bot_1$};	
		
		\node[state,minimum size = 0.1cm] at (6, 4.5)     (nodeabs2)     {$\bot_2$};

		\draw[every loop,
		auto=right,
		>=latex,
		]
		(nodeone) edge[bend left=0, auto=left] node {$0.7$} (nodetwo)									
		
		(nodetwo) edge[bend left=0, auto=left] node[text width=1cm,align=center] {$ \delta_{1}(5\,|\,2,a^2)$,\\$ \delta_{1}(5\,|\,2,b^2)$} (nodefive)									
		(nodetwo) edge[bend left=0, auto=left] node[xshift=-4pt, yshift=-3pt] {$ \delta_{1}(6\,|\,2,a^2)$} (nodesix)									
		
		(nodetwo)     edge[loop below] node {$\delta_1(2\,|\,2,a^2)$} (nodetwo)	
		(nodethree)     edge[bend left=15, auto=left] node {$\frac{1}{2}$} (nodefour)	
		(nodefour) edge[bend left=15, auto=left] node {$\frac{1}{2}$} (nodethree)						
		(nodeone) edge[bend left=0, auto=right] node {$0.3$} (nodethree)						
		(nodethree)     edge[loop left] node {$\frac{1}{2}$} (nodethree)			
		(nodefour)     edge[loop right] node {$\frac{1}{2}$} (nodefour)					
		
		(nodesix)     edge[loop right] node {$1$} (nodesix)		
				(nodeseven)     edge[loop above] node {$1$} (nodeseven)							
		(nodefive)     edge[bend left=0, auto=right] node {$1$} (nodeabs1)			
		
		(nodetwo) edge[bend left=0, auto=left] node {$\delta_1(6\,|\,2,b^2)$} (nodeabs1)			
		
		(nodeabs1)     edge[loop left] node {$1$} (nodeabs1)		
		(nodeabs2)     edge[loop right] node {$1$} (nodeabs2)		
									
		;
	\end{tikzpicture}
	\caption{The preprocessed MDP $M_1^{\p}$ corresponding to $M_1$ in Example~\ref{exam:informative_revealing}.}\label{fig:preprocessed}
\end{figure}
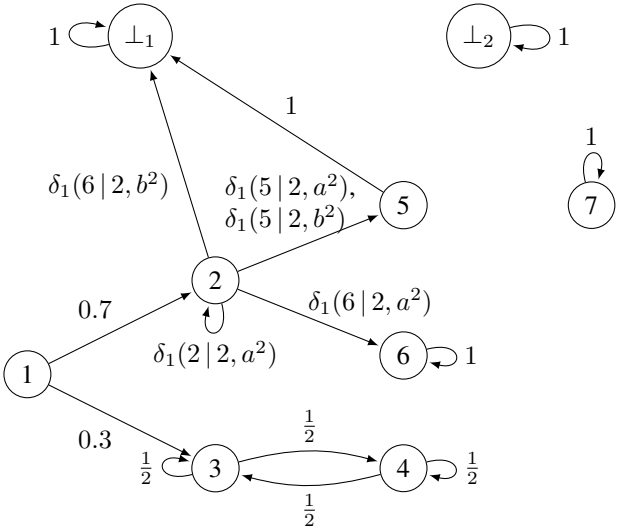

%

Since we only modify the identity-revealing transitions during the preprocessing,  the BC for $\mathcal{M}$ is equal to that for $\mathcal{M}^{\p}$ under the same policy, i.e., the detection problem for $\mathcal{M}$ is equivalent to that for $\mathcal{M}^{\p}$. 
The set of informative state-action pairs $\ISA^{\p}$ for $\mathcal{M}^{\p}$ contains the terminal states and the associated actions compared to $\ISA$ , i.e.,
\begin{equation*}
\ISA^{\p}=\ISA\cup\{(\bot_1,a^{\bot_1}),(\bot_2,a^{\bot_2})\}.
\end{equation*}

%
%
%
%


\subsection{APD for binary MMDPs}
Before presenting our policy synthesis algorithm, we further introduce the \emph{informative MDP} of a binary MMDP $\mathcal{M}$. It turns out that the policy synthesis problem for the detection of $\mathcal{M}$ can be transcribed to a problem of synthesizing policies that satisfy certain properties on the informative MDP. 

\begin{defi}[Informative MDP]\label{def:informative_MDP}
	Given a binary MMDP $\mathcal{M}=\{M_1,M_2\}$ and its preprocessed counterpart $\mathcal{M}^{\p}=\{M_1^{\p},M_2^{\p}\}$, an informative MDP $M^{\I}$ is a tuple  $M^{\I}=(\mathcal{S}^{\p},\mathcal{A}^{\p},\delta^{\I},{s}_{\init})$ where $\mathcal{S}^{\p}$, $\mathcal{A}^{\p}$, $s_{\init}$ are the same state space, action space and initial state as $M_1^{\p}$, and $\delta^{\I}=\gamma\delta_1^{\p}+(1-\gamma)\delta_2^{\p}$ for any $\gamma\in(0,1)$.
\end{defi}

In Definition~\ref{def:informative_MDP}, the informative MDP of a given binary MMDP is not unique. However, all the informative MDPs have the same transition structure, which essentially determines the solution to our policy synthesis problem for APD. 

The following theorem identifies a necessary and sufficient condition on a policy $\bm{\pi}$ for achieving APD for binary MMDPs. For an infinite history $h$ of an MDP, we will denote the set of state-action pairs that appear infinitely often (i.o.) in $h$ by $\inft(h)$, i.e., $\inft(h)=\{(s,a)\,|\,s\in\mathcal{S},a\in\mathcal{A}_{s}, \textup{ and } (s,a)\textup{ appears i.o. in } h \}$.

\begin{thom}[Necessary and sufficient condition for APD for binary MMDPs]\label{thm:main}
Given a binary MMDP $\mathcal{M}=\{M_1,M_2\}$ and any of its informative MDPs $M^{\I}$, a policy $\bm{\pi}$ achieves APD for $\mathcal{M}$ if and only if 
\begin{equation*}
\mathcal{P}_{\I}^{\bm{\pi}}(\{h\in \mathcal{H}^{\I}: \inft(h)\cap{\ISA^{\p}}\neq\emptyset\})=1, 
\end{equation*}
where $\mathcal{P}_{\I}^{\bm{\pi}}$ is the probability measure induced by the transition kernel $\delta^{\I}$ and the policy $\bm{\pi}$ over the measurable space $(\mathcal{H}^{\I},\mathcal{Q}^{\I})$ of $M^{\I}$.
\end{thom}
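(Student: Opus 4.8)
The plan is to reduce the statement, via \lemaref{lema:APDgeneral}, to showing that $B(\bm{\pi})=0$ if and only if $\mathcal{P}_{\I}^{\bm{\pi}}(\{h\in\mathcal{H}^{\I}:\inft(h)\cap\ISA^{\p}\neq\emptyset\})=1$, and then to analyze $B(\bm{\pi})$ through a likelihood-ratio (martingale) argument on the informative MDP. Since $\gamma\in(0,1)$, the kernel $\delta^{\I}=\gamma\delta_1^{\p}+(1-\gamma)\delta_2^{\p}$ is positive wherever either $\delta_1^{\p}$ or $\delta_2^{\p}$ is positive, so the induced measure $\mathcal{P}_{\I}^{\bm{\pi}}$ dominates both $\mathcal{P}_1^{\bm{\pi}}$ and $\mathcal{P}_2^{\bm{\pi}}$ on $(\mathcal{H}^{\I},\mathcal{Q}^{\I})$. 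Writing $u=d\mathcal{P}_1^{\bm{\pi}}/d\mathcal{P}_{\I}^{\bm{\pi}}$ and $v=d\mathcal{P}_2^{\bm{\pi}}/d\mathcal{P}_{\I}^{\bm{\pi}}$, the defining integral~\eqref{eq:BdistanceLeb} becomes $B(\bm{\pi})=\int_{\mathcal{H}^{\I}}\sqrt{uv}\,d\mathcal{P}_{\I}^{\bm{\pi}}$, so by \lemaref{lema:APDgeneral} APD is equivalent to $uv=0$ holding $\mathcal{P}_{\I}^{\bm{\pi}}$-almost everywhere. It therefore suffices to prove that, up to $\mathcal{P}_{\I}^{\bm{\pi}}$-null sets, $\{uv>0\}=\{\inft(h)\cap\ISA^{\p}=\emptyset\}$.

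Second, I would set up the finite-horizon likelihood ratios $u_t(h_t)=\mathbb{P}_1^{\bm{\pi}}(h_t)/\mathbb{P}_{\I}^{\bm{\pi}}(h_t)$ and $v_t(h_t)=\mathbb{P}_2^{\bm{\pi}}(h_t)/\mathbb{P}_{\I}^{\bm{\pi}}(h_t)$; the policy factors cancel, leaving telescoping products of the per-transition ratios $\delta_i^{\p}(s_{\tau+1}\,|\,s_\tau,a_\tau)/\delta^{\I}(s_{\tau+1}\,|\,s_\tau,a_\tau)$. These are non-negative martingales under $\mathcal{P}_{\I}^{\bm{\pi}}$ converging a.s.\ to $u$ and $v$. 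A direct computation gives $\mathbb{E}_{\I}^{\bm{\pi}}[\sqrt{u_{t+1}v_{t+1}}\mid\mathcal{F}_t]=\sqrt{u_tv_t}\,\beta(s_t,a_t)$, where $\beta(s,a)=\sum_{s'}\sqrt{\delta_1^{\p}(s'\,|\,s,a)\delta_2^{\p}(s'\,|\,s,a)}$ is the one-step Bhattacharyya coefficient of the transition distributions; by Cauchy--Schwarz $\beta(s,a)\le1$, with $\beta(s,a)=1$ exactly when $\delta_1^{\p}(\cdot\,|\,s,a)=\delta_2^{\p}(\cdot\,|\,s,a)$, i.e.\ precisely at non-informative, non-revealing pairs. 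Because $\ISA$ is finite, $\beta^{\ast}:=\max_{(s,a)\in\ISA}\beta(s,a)<1$.

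Third, I would establish the two inclusions. For $\{\inft(h)\cap\ISA^{\p}=\emptyset\}\subseteq\{uv>0\}$: such a history reaches no terminal state and, after some finite time, traverses only non-informative pairs where $\delta_1^{\p}=\delta_2^{\p}=\delta^{\I}$; hence $u_t$ and $v_t$ stabilize at strictly positive values (each earlier factor is positive because, by the preprocessing, every realized non-terminal transition lands in the common support), giving $uv>0$. For the reverse inclusion modulo null sets, I would split $\{\inft(h)\cap\ISA^{\p}\neq\emptyset\}$ into two events. If the history reaches $\bot_i$, then since $\delta_{3-i}^{\p}(\bot_i\,|\,\cdot)=0$ by the preprocessing, the ratio $v_t$ (resp.\ $u_t$) hits zero and stays there, so $uv=0$. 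If instead the history visits a non-terminal informative pair infinitely often, I would write $S_t:=\sqrt{u_tv_t}=M_t\prod_{\tau<t}\beta(s_\tau,a_\tau)$, where $M_t:=S_t/\prod_{\tau<t}\beta(s_\tau,a_\tau)$ is a non-negative martingale, hence a.s.\ convergent to a finite limit; since infinitely many factors are $\le\beta^{\ast}<1$ and all factors are $\le1$, the product tends to $0$, forcing $S_t\to0$ and thus $uv=0$ on this event.

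Finally, combining the inclusions yields $\{uv=0\}=\{\inft(h)\cap\ISA^{\p}\neq\emptyset\}$ up to null sets, and the chain of equivalences APD $\Leftrightarrow uv=0$ a.e.\ $\Leftrightarrow\mathcal{P}_{\I}^{\bm{\pi}}(\{\inft(h)\cap\ISA^{\p}\neq\emptyset\})=1$ follows. The main obstacle I anticipate is the measure-theoretic bookkeeping on the infinite-history space: rigorously justifying the identity $B(\bm{\pi})=\int\sqrt{uv}\,d\mathcal{P}_{\I}^{\bm{\pi}}$ together with the a.s.\ martingale convergence $u_t\to u$, $v_t\to v$ (and hence $S_t\to\sqrt{uv}$), and carefully treating the boundary case where an informative transition lands on a state at which the two kernels happen to coincide, so that individual multiplicative factors may equal one even though $\beta<1$ in expectation.
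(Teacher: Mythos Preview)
Your proposal is correct and takes a genuinely different route from the paper's own proof. The paper argues via Kakutani's orthogonality criterion (\lemaref{lema:bcorthogonal}): for sufficiency it transfers the probability-one statement from $\mathcal{P}_{\I}^{\bm{\pi}}$ to $\mathcal{P}_1^{\bm{\pi}}$ and $\mathcal{P}_2^{\bm{\pi}}$ through a dedicated lemma (\lemaref{lemma:probmeasures}), then builds an explicit separating set $F$ by partitioning $\ISA^{\p}$ into five pieces $\ISA^{\p}_{\textup{a}},\ISA^{\p}_{0},\ldots,\ISA^{\p}_{3}$, invoking end-component theory (\lemaref{lemma:fnintelyoften}) for the pieces with one-sided terminal transitions and the strong law of large numbers for the piece $\ISA^{\p}_{0}$ with common supports; for necessity it exhibits a prefix with positive mass under both models after which no informative pair is ever taken, and lower-bounds $B(t,\bm{\pi})$ directly. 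Your argument replaces all of this case analysis with a single likelihood-ratio supermartingale: by using $\mathcal{P}_{\I}^{\bm{\pi}}$ as a dominating measure you obtain the identity $B(\bm{\pi})=\int\sqrt{uv}\,d\mathcal{P}_{\I}^{\bm{\pi}}$ and the pathwise characterization $\{uv=0\}=\{\inft(h)\cap\ISA^{\p}\neq\emptyset\}$ (mod null sets) drops out of the one-step relation $\mathbb{E}_{\I}^{\bm{\pi}}[\sqrt{u_{t+1}v_{t+1}}\mid\mathcal{F}_t]=\sqrt{u_tv_t}\,\beta(s_t,a_t)$ and the uniform gap $\beta^{\ast}<1$ on $\ISA$. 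What your approach buys is a unified, structure-free argument that handles both directions symmetrically and never touches end components or the SLLN; what the paper's approach buys is an explicit separating event $F$, which makes the mutual singularity of $\mathcal{P}_1^{\bm{\pi}}$ and $\mathcal{P}_2^{\bm{\pi}}$ completely concrete. The measure-theoretic points you flag (identifying the martingale limits $u_t\to u$, $v_t\to v$ via L\'evy's theorem under absolute continuity, and filtering by $(s_0,a_0,\ldots,s_t,a_t)$ so that $\beta(s_t,a_t)$ is $\mathcal{F}_t$-measurable) are routine; just be careful, when defining $M_t=S_t/\prod_{\tau<t}\beta(s_\tau,a_\tau)$, to restrict to the event where no revealing pair has been taken (equivalently, no terminal state reached), since otherwise the denominator may vanish --- on the complementary event you have already shown $uv=0$ directly.
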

\begin{proof}
We postpone the proof to Appendix~\ref{sec:proofmainthm}. 
\end{proof}

Based on Theorem~\ref{thm:main}, we can transform the policy synthesis problem for APD for a binary MMDP $\mathcal{M}$ to the problem of searching for a policy that satisfies certain properties on an informative MDP of $\mathcal{M}$. 
We develop Algorithm~\ref{alg:APD} that determines the existence of a policy that achieves APD for a binary MMDP $\mathcal{M}$. The algorithm also returns a correct policy if it exists.


\begin{algorithm}[http]
	\KwIn{A binary MMDP $\mathcal{M}$}	
	\KwOut{A boolean variable indicating whether the policy for APD exists and a policy}
	\SetKwBlock{Begin}{function}{end function}
	\Begin($\texttt{BiAPD} {(} \mathcal{M} {)}$)
	{
	Construct an informative MDP $M^{\I}$ of $\mathcal{M}$\label{alg:IMDP}\\
Compute the set of MECs $\mathcal{C}(M^{\I})$\label{alg:mec}\\
 Find the set of informative MECs $\mathcal{C}^{\I}(M^{\I})$ in~\eqref{eq:informativeMEC}\label{alg:inmec}\\
 Compute the set of states that reach $\mathcal{C}^{\I}(M^{\I})$ w.p. 1: $\mathcal{R}^{\textup{max}}=\{s\in\mathcal{S}^{\p}\,|\,\mathbb{P}_{s,M^{\I}}^{\textup{max}}(\textup{reach}(\mathcal{C}^{\I}(M^{\I})))=1\}$\label{alg:setreachmax}\\
\uIf{$s_{\init}\notin\mathcal{R}^{\textup{max}}$\label{alg:maximumreach}}
	{
	\Return{$(0, \emptyset)$}
	}
\Else
{
	Synthesize a policy $\bm{\pi}^0$ such that the set of states in $\mathcal{C}^{\I}(M^{\I})$ is reached w.p. $1$ from $s_{\init}$\label{alg:reachpolicy}\\
\For{$C=(\mathcal{X},\mathcal{U})\in\mathcal{C}^{\I}(M^{\I})$}
{\label{alg:insideMEC}
	\For{$s\in\mathcal{X}$}
	{
		$\bm{\pi}^C(a\,|\,s)=\frac{1}{|\mathcal{U}_s|}$ for $a\in\mathcal{U}_s$
	}
}
\Return{ $(1, \{\bm{\pi}^0\}\cup \{\bm{\pi}^{C}\}_{C\in\mathcal{C}^{\I}(M^{\I})})$}
}	
	\caption{APD for binary MMDPs}\label{alg:APD}
}
\end{algorithm}

In Algorithm~\ref{alg:APD}, an informative MDP $M^{\I}$ in line~\ref{alg:IMDP} can be constructed by traversing the states and actions in the original MDPs. We can then compute of the set of MECs in line~\ref{alg:mec} via~\cite[Algorithm 47]{CB-JK:08}; the set $\mathcal{C}^{\I}(M^{\I})$ of informative MECs in line~\ref{alg:inmec} is defined by
\begin{equation}\label{eq:informativeMEC}
\mathcal{C}^{\I}(M^{\I})=\{(\mathcal{X},\mathcal{U})\in\mathcal{C}(M^{\I})\,|\,\exists(s,a)\in\ISA^{\p}, s\in \mathcal{X},a\in \mathcal{U}\},
\end{equation}
which consists of all MECs that contain at least one informative state-action pair. Note that $\mathcal{C}^{\I}(M^{\I})$ is always nonempty since it contains the MECs $(\bot_i,a^{\bot_i})$ for $i\in\{1,2\}$; we compute the set of states $\mathcal{R}^{\textup{max}}\subset\mathcal{S}^{\p}$ that have a maximum probability of one to reach the states in $\mathcal{C}^{\I}(M^{\I})$ via the graph-theoretic algorithm \cite[Algorithm 45]{CB-JK:08}~in line~\ref{alg:setreachmax}. In line~\ref{alg:reachpolicy}, the policy $\bm{\pi}^0$  at a state $s\in\mathcal{R}^{\textup{max}}\setminus\mathcal{C}^{\I}(M^{\I})$ takes any action $a\in\mathcal{A}^{\p}_{s}$ that satisfies $\Supp(\delta^{\I}(\cdot\,|\,s,a))\subset\mathcal{R}^{\textup{max}}$ with probability (w.p.) $1$. Such an action always exists for the states in $\mathcal{R}^{\textup{max}}\setminus\mathcal{C}^{\I}(M^{\I})$ according to the form of the Bellman optimality equation \cite[Theorem 10.100]{CB-JK:08}.

%
%

Theorem~\ref{thm:alg} guarantees the correctness of Algorithm~\ref{alg:APD}. 
\begin{thom}[Correctness of Algorithm~\ref{alg:APD}]\label{thm:alg}
Given a binary MMDP $\mathcal{M}=\{M_1,M_2\}$, Algorithm~\ref{alg:APD} determines in finite time the existence of a policy that achieves APD for $\mathcal{M}$ and synthesizes a policy when one exists. 
\end{thom}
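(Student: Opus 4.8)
The plan is to use \thomref{thm:main} to recast the detection requirement as an almost-sure recurrence condition on the informative MDP $M^{\I}$, and then to translate that recurrence condition into the maximal-reachability problem that Algorithm~\ref{alg:APD} actually solves. The bridge between the two is the fundamental theorem of end components: under \emph{any} policy on a finite MDP, the set $\inft(h)$ of state-action pairs visited infinitely often is, almost surely, (the state/action set of) an end component of $M^{\I}$. With this in hand I would argue that the event $\{h\in\mathcal{H}^{\I}:\inft(h)\cap\ISA^{\p}\neq\emptyset\}$ coincides, up to a null set, with the event of being eventually absorbed into an \emph{informative} MEC, i.e.\ one of those collected in $\mathcal{C}^{\I}(M^{\I})$ via~\eqref{eq:informativeMEC}. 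The proof then splits into the two directions of correctness, matching the two return branches of the algorithm, followed by a bookkeeping argument for finite-time termination.

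For the necessity direction (the branch returning $(0,\emptyset)$), suppose some policy $\bm{\pi}$ achieves APD. By \thomref{thm:main}, $\mathcal{P}_{\I}^{\bm{\pi}}(\{h:\inft(h)\cap\ISA^{\p}\neq\emptyset\})=1$. Almost surely $\inft(h)$ is an end component; on the event above it contains an informative pair, so its states lie inside some informative MEC, and the path is eventually confined to that MEC's state set. Hence under $\bm{\pi}$ the set of states in $\mathcal{C}^{\I}(M^{\I})$ is reached with probability one, which forces $\mathbb{P}_{s_{\init},M^{\I}}^{\textup{max}}(\textup{reach}(\mathcal{C}^{\I}(M^{\I})))=1$, i.e.\ $s_{\init}\in\mathcal{R}^{\textup{max}}$. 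Contrapositively, if $s_{\init}\notin\mathcal{R}^{\textup{max}}$ then no APD policy exists, so the algorithm's report of non-existence in line~\ref{alg:maximumreach} is correct.

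For the sufficiency direction, assume $s_{\init}\in\mathcal{R}^{\textup{max}}$ and consider the policy assembled from $\bm{\pi}^0$ and the per-MEC policies $\{\bm{\pi}^C\}$. By construction $\bm{\pi}^0$ drives the process from $s_{\init}$ into the states of $\mathcal{C}^{\I}(M^{\I})$ with probability one. Once inside an informative MEC $C=(\mathcal{X},\mathcal{U})$, the uniform rule $\bm{\pi}^C(a\,|\,s)=1/|\mathcal{U}_s|$ keeps the process in $\mathcal{X}$ (property~\ref{itm:nextstates} of \defiref{def:MEC}) and, by strong connectivity (property~\ref{itm:strongconnect}), induces a finite irreducible Markov chain on $\mathcal{X}$; consequently every pair $(s,a)$ with $s\in\mathcal{X}$ and $a\in\mathcal{U}_s$ — in particular the informative pair guaranteed by~\eqref{eq:informativeMEC} — is visited infinitely often almost surely. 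Thus $\inft(h)\cap\ISA^{\p}\neq\emptyset$ with probability one, and \thomref{thm:main} yields APD. Finally, each line of Algorithm~\ref{alg:APD} runs in finite time: the MEC computation and the maximal-reachability step are the graph-theoretic routines cited from~\cite{CB-JK:08}, while constructing $M^{\I}$, selecting the informative MECs, and assigning $\bm{\pi}^0$ and each $\bm{\pi}^C$ are simple scans, so the overall procedure halts. The main obstacle I anticipate is making the end-component correspondence rigorous in \emph{both} directions: the necessity step must rule out probability mass that visits informative pairs only finitely often yet still reaches an informative MEC, and the sufficiency step must certify that the uniform policy realizes genuine infinite recurrence of the informative pair rather than merely reaching the MEC, which is exactly where irreducibility of the induced chain is indispensable.
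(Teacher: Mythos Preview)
Your proposal is correct and follows essentially the same route as the paper: both directions hinge on \thomref{thm:main} together with the end-component characterization of $\inft(h)$, and the sufficiency argument is identical. The only cosmetic difference is that the paper packages the equivalence between ``$\inft(h)\cap\ISA^{\p}\neq\emptyset$ a.s.'' and ``reach $\mathcal{C}^{\I}(M^{\I})$ w.p.~1'' into a single citation (\lemaref{lemma:reachabilityandio}, i.e.\ \cite[Theorem~4.2]{LdeA:97}), whereas you unpack that step directly via the end-component theorem; the content is the same.
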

\begin{proof}
We postpone the proof to Appendix~\ref{sec:proofalg}.
\end{proof}

A few remarks on Algorithm~\ref{alg:APD} are in order.

\begin{rek}[Polynomial time complexity]
Algorithm~\ref{alg:APD} has polynomial time complexity in the total number of states $|\mathcal{S}^{\p}|$ and the total number of actions $|\mathcal{A}^{\p}|$. Specifically, the construction of an informative MDP in line~\ref{alg:IMDP} takes $\mathcal{O}(|\mathcal{A}^{\p}||\mathcal{S}^{\p}|^2)$; the MEC decomposition in line~\ref{alg:mec} takes $\mathcal{O}(|\mathcal{A}^{\p}||\mathcal{S}^{\p}|^3)$ 
\cite[Page 879]{CB-JK:08}; computing the set $\mathcal{R}^{\textup{max}}$ in line~\ref{alg:setreachmax} takes $\mathcal{O}(|\mathcal{A}^{\p}||\mathcal{S}^{\p}|^3)$\cite[Page 860]{CB-JK:08}.
\end{rek}

\begin{rek}[Pure dependence on the structure of informative MDPs]\label{rek:dependencestructure}
	The outcome of Algorithm~\ref{alg:APD} depends purely on the structure of informative MDPs instead of the exact transition probabilities. Therefore, the selection of $\gamma\in(0,1)$ in Definition~\ref{def:informative_MDP} can be arbitrary. 
\end{rek}

\begin{rek}[APD from any state]
The existence of a policy that achieves APD for a binary MMDP depends crucially on the initial state, as demonstrated in line~\ref{alg:maximumreach} of Algorithm~\ref{alg:APD}. The set $\mathcal{R}^{\textup{max}}$ contains exactly those states from which there exists a policy such that APD can be achieved. Moreover, the policies $\bm{\pi}^0$ and $\bm{\pi}^C$ stay the same regardless of the initial state. Therefore, Algorithm~\ref{alg:APD}, subject to minor modifications, is able to determine APD from all states in one shot. On the other hand, when the initial condition is a distribution over the state space, APD can be determined by examining if the support of the initial distribution is a subset of $\mathcal{R}^{\textup{max}}$.
\end{rek}

We next show that for a given binary MMDP $\mathcal{M}$, if there exists a policy $\bm{\pi}$ under which APD is achieved, then the BC  $B(t,\bm{\pi})$ under $\bm{\pi}$ converges to zero exponentially fast with the length $t$ of the history.

\begin{lema}[Exponential convergence of the BC]\label{lema:expconvergence}
Given a binary MMDP $\mathcal{M}=\{M_1,M_2\}$, suppose APD for $\mathcal{M}$ is achieved under a stationary policy $\bm{\pi}$. Then, the BC converges exponentially fast, i.e., there exist $c>0$ and $0<\lambda<1$ such that
\begin{equation*}
B(t,\bm{\pi})\leq c\lambda^t.
\end{equation*}
\end{lema}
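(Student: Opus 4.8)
The plan is to establish exponential convergence of the Bhattacharyya coefficient under a stationary policy $\bm{\pi}$ achieving APD by recognizing that the product dynamics of the two MDPs form a finite absorbing Markov chain, and that APD is equivalent to almost-sure absorption. First I would work with the preprocessed MMDP $\mathcal{M}^{\p}$, since by the earlier discussion the BC is unchanged and the identity-revealing transitions are routed to the terminal states $\bot_1,\bot_2$. The central object is the one-step ``survival'' operator on histories implicit in the proof of Lemma~\ref{lema:monotone}: defining $\rho(s,a) = \sum_{s'\in\mathcal{S}^{\p}}\sqrt{\delta_1^{\p}(s'\,|\,s,a)\,\delta_2^{\p}(s'\,|\,s,a)}$, the recursion there shows that
\begin{equation*}
B(t+1,\bm{\pi}) = \sum_{h_t\in\mathcal{H}_t}\sqrt{\mathbb{P}_1^{\bm{\pi}}(h_t)\mathbb{P}_2^{\bm{\pi}}(h_t)}\,\rho\big(h_t(t),a_{h_t}\big),
\end{equation*}
where for a stationary $\bm{\pi}$ the action at $h_t(t)$ depends only on the current state. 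Each factor $\rho(s,a)\le 1$, with strict inequality $\rho(s,a)<1$ exactly when $(s,a)$ is informative (or revealing, where $\rho=0$).

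Next I would set up the right state-space to carry the contraction. Consider the Markov chain on $\mathcal{S}^{\p}$ induced by $\bm{\pi}$ on the informative MDP $M^{\I}$ (equivalently, on either $M_i^{\p}$, since APD forces the ``surviving'' trajectories to lie in the common support). I would define a substochastic matrix $Q$ indexed by states, with $Q_{s,s'} = \sum_{a}\pi(a\,|\,s)\sqrt{\delta_1^{\p}(s'\,|\,s,a)\delta_2^{\p}(s'\,|\,s,a)}$, so that $B(t,\bm{\pi}) = \mathbbb{1}_{|\mathcal{S}^{\p}|}^{\top} Q^t \mathbf{e}_{s_{\init}}$ up to indexing conventions. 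The row sums of $Q$ are $\sum_a \pi(a\,|\,s)\,\rho(s,a)\le 1$, and are strictly less than $1$ precisely at states where the policy places mass on an informative or revealing pair. By Theorem~\ref{thm:main}, APD under $\bm{\pi}$ means that almost every infinite history hits an informative state-action pair infinitely often; for a stationary policy this forces every recurrent class of the $\bm{\pi}$-induced chain to contain a state-action pair $(s,a)\in\ISA^{\p}$ with $\rho(s,a)<1$. Hence from every state, within a bounded number $k\le|\mathcal{S}^{\p}|$ of steps, the walk traverses such a strictly-contracting pair with probability bounded below, so $Q^k$ has all row sums bounded by some $\lambda_0<1$.

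The key step is then a standard spectral/sub-multiplicativity argument: the spectral radius of $Q$ satisfies $\mathrm{sp}(Q)=\lambda<1$, because $Q^k$ is a nonnegative matrix whose row sums are all at most $\lambda_0<1$, giving $\|Q^k\|_\infty\le\lambda_0$ and therefore $\mathrm{sp}(Q)\le\lambda_0^{1/k}<1$. Taking $\lambda=\lambda_0^{1/k}$ and using $B(t,\bm{\pi})\le\|Q^t\|_\infty\cdot\|\mathbf{e}_{s_{\init}}\|$, Gelfand's formula (or directly $\|Q^t\|_\infty\le\lfloor t/k\rfloor$-fold submultiplicativity) yields $B(t,\bm{\pi})\le c\lambda^t$ for a suitable constant $c>0$. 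The main obstacle I anticipate is the careful justification that APD under a stationary policy forces a strictly contracting pair to be visited within each bottom strongly-connected component with uniformly positive probability; this is where Theorem~\ref{thm:main} must be invoked and translated from the ``infinitely often'' statement into a quantitative per-step contraction, rather than in bounding the spectral radius, which is routine once the uniform contraction is in hand. One must also handle the absorbing terminal states $\bot_1,\bot_2$ correctly, where $\rho=0$, so that trajectories reaching them contribute zero and do not impede the bound.
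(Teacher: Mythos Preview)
Your proposal is correct, and it shares with the paper the crucial first step: expressing $B(t,\bm{\pi})$ as $\mathbbb{e}_{s_{\init}}^\top W^t\mathbbb{1}_{|\mathcal{S}|}$ for the substochastic matrix $W_{s,s'}=\sum_a\pi(a\,|\,s)\sqrt{\delta_1(s'\,|\,s,a)\delta_2(s'\,|\,s,a)}$ (your $Q$). From there, however, the two arguments diverge. The paper does \emph{not} invoke Theorem~\ref{thm:main} or any combinatorial analysis of recurrent classes. It simply writes $W^t$ via its Jordan decomposition, so that $B(t,\bm{\pi})=\sum_{i,k}c_{ik}\,t^k\lambda_i^{t-k}$, and then uses only the fact that APD implies $B(t,\bm{\pi})\to 0$ (Lemma~\ref{lema:APDgeneral}) to conclude that every term with $|\lambda_i|=1$ must carry zero coefficient; the remaining terms decay exponentially. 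This sidesteps entirely the obstacle you flag about translating ``infinitely often'' into a uniform per-block contraction.

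Your route, by contrast, is more constructive: it produces an explicit (if crude) rate $\lambda=\lambda_0^{1/k}$ in terms of the MDP data, whereas the paper's Jordan-form argument is nonconstructive about the constant and the rate. One small caveat in your outline: the claim that \emph{every} row of $Q^k$ has sum $<1$ need not hold for states unreachable from $s_{\init}$ under the $Q$-dynamics (APD is a statement from $s_{\init}$), so you should either restrict $Q$ to the states reachable from $s_{\init}$ in its own support, or argue directly about $\mathbbb{e}_{s_{\init}}^\top Q^t\mathbbb{1}$ rather than $\|Q^t\|_\infty$.
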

\begin{proof}
We postpone the proof to Appendix~\ref{sec:proofexpconvergence}.
\end{proof}

Lemma~\ref{lema:expconvergence} shows that the BC decays exponentially fast when we apply the policy synthesized by Algorithm~\ref{alg:APD}. 
Note that when the estimated prior $q$ is accurate and close to the true prior $\theta$ in the Bayesian rule, the BC serves as a tight bound for the error probability. In this case, we can confidently decide between candidate hypotheses based on a potentially short observation sequence due to the rapid decay of the BC.

\section{Detection of general MMDPs}\label{sec:general}
In this section, we study the detection problem for an MMDP $\mathcal{M}=\{M_i\}_{i\in\mathbb{N}_{1}^N}$ that consists of $N\geq3$ MDPs.  We are interested in asymptotically perfectly detecting any MDP in $\mathcal{M}$ that could govern the underlying transition dynamics.

\subsection{APD for  multiple hypotheses}
Similar to the binary case, we first derive bounds on the probability of error for the MAP rule when there are more than two hypotheses. 

\begin{lema}[Probability of error for multiple hypotheses]\label{lema:BCgeneral}
	Given $N$ hypotheses and for  $i\in\mathbb{N}_{1}^N$, let $f_i: \mathbb{R}^n\rightarrow[0,\infty)$,  $q_i>0$ and $\theta_i>0$ be the PDF, the estimated prior and the true prior of the $i$-th hypothesis, respectively. Then the probability of error $P_{\textup{error}}$ of the MAP rule satisfies 
	\begin{multline}\label{eq:boundsonBgeneral}
	\frac{1}{2}\max_{k\in\mathbb{N}_{1}^N}\{\sum_{i\neq k}\min\{\theta_i,\theta_k\}B_{ik}^2\}\leq P_{\textup{error}}\\
	\leq\max_i\{\frac{\theta_i}{q_i}\}\cdot\sum_{i<j}\sqrt{q_iq_j}B_{ij},
\end{multline}
where $B_{ij}$ is the BC between $f_i(z)$ and $f_j(z)$ defined by
\begin{equation*}
B_{ij}=\int_{\mathbb{R}^n}\sqrt{f_i(z)f_j(z)}dz.
\end{equation*}
\end{lema}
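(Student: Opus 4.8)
The plan is to analyze the MAP decision regions directly and to obtain both bounds by reducing the $N$-ary comparison to pairwise comparisons, which is exactly where the Bhattacharyya coefficients $B_{ij}$ enter.

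For the upper bound, I would first write the MAP rule explicitly: on observing $z$ it decides an index $\hat k(z)\in\argmax_{i}q_if_i(z)$, inducing a partition $\{R_k\}_{k\in\mathbb{N}_{1}^N}$ of $\mathbb{R}^n$ with $q_kf_k(z)\ge q_if_i(z)$ for all $i$ whenever $z\in R_k$ (ties broken arbitrarily). Since an error under true hypothesis $i$ occurs precisely when $z\notin R_i$, I would write
$$P_{\textup{error}}=\sum_{k}\sum_{i\neq k}\int_{R_k}\theta_i f_i(z)\,dz=\sum_{i<j}\Big(\int_{R_j}\theta_i f_i(z)\,dz+\int_{R_i}\theta_j f_j(z)\,dz\Big),$$
where the second equality regroups the ordered pairs $(i,k)$ into unordered pairs $\{i,j\}$. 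On $R_j$ the defining inequality $q_jf_j\ge q_if_i$ gives $q_if_i\le\sqrt{q_if_i}\sqrt{q_jf_j}$, hence $\theta_if_i=\tfrac{\theta_i}{q_i}q_if_i\le\max_\ell\tfrac{\theta_\ell}{q_\ell}\sqrt{q_iq_j}\sqrt{f_if_j}$, and symmetrically on $R_i$. Bounding each pair and using that $R_i,R_j$ are disjoint to recombine $\int_{R_i}+\int_{R_j}\le\int_{\mathbb{R}^n}$ yields $\max_\ell\tfrac{\theta_\ell}{q_\ell}\sqrt{q_iq_j}B_{ij}$ per pair, and summing over $i<j$ gives the claim. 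The step I would be most careful about is exploiting the disjointness of the decision regions: this is exactly what avoids a spurious factor (a sum $\tfrac{\theta_i}{q_i}+\tfrac{\theta_j}{q_j}$ rather than the single $\max_\ell\tfrac{\theta_\ell}{q_\ell}$) that a naive pointwise bound on $R_k$ over all $i\neq k$ would incur by double counting pairs.

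For the lower bound I would not use the estimated priors at all, observing that the error of \emph{any} rule dominates the Bayes error: since $\theta_{\hat k(z)}f_{\hat k(z)}(z)\le\max_i\theta_if_i(z)$ pointwise,
$$P_{\textup{error}}\ge\int_{\mathbb{R}^n}\Big(\sum_{i}\theta_if_i(z)-\max_{i}\theta_if_i(z)\Big)\,dz.$$
The key combinatorial step is a pointwise inequality valid for every fixed reference index $k$,
$$\sum_{i}\theta_if_i(z)-\max_{i}\theta_if_i(z)\ \ge\ \sum_{i\neq k}\min\{\theta_if_i(z),\theta_kf_k(z)\},$$
which I would establish by a short case analysis on whether the maximizer $m(z)$ equals $k$: if $m=k$ the two sides are equal, and if $m\neq k$ the $m$-term on the right collapses to $\theta_kf_k$ and one checks the left side still dominates.

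Finally, to turn the pairwise min-integrals into Bhattacharyya coefficients I would use two elementary inequalities: the pointwise $\min\{\theta_if_i,\theta_kf_k\}\ge\min\{\theta_i,\theta_k\}\min\{f_i,f_k\}$, and, writing $\int\min\{f_i,f_k\}=1-\mathrm{TV}(f_i,f_k)$ and applying Cauchy--Schwarz to $\int|\sqrt{f_i}-\sqrt{f_k}|\,|\sqrt{f_i}+\sqrt{f_k}|$, the bound $\int\min\{f_i,f_k\}\ge 1-\sqrt{1-B_{ik}^2}\ge\tfrac12 B_{ik}^2$. Combining gives $\int\min\{\theta_if_i,\theta_kf_k\}\ge\tfrac12\min\{\theta_i,\theta_k\}B_{ik}^2$; since the chain holds for every $k$, taking the maximum over $k$ produces $\tfrac12\max_k\sum_{i\neq k}\min\{\theta_i,\theta_k\}B_{ik}^2$. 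As a sanity check, for $N=2$ both bounds collapse to Lemma~\ref{lema:APDgeneral}. I expect the pointwise domination by pairwise minima to be the main obstacle, since it is what allows a genuinely $N$-ary quantity to be controlled by a sum of pairwise terms without any loss of constants.
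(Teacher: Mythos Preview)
Your argument is correct. The upper-bound portion is essentially the paper's approach (the paper simply cites an external reference for the last step that you spell out by pairing ordered terms into unordered ones and using the disjointness of $R_i$ and $R_j$).

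For the lower bound you take a genuinely different route. The paper works directly with the MAP regions $R_i$ (based on the estimated priors $q_i$) and proves, via Cauchy--Schwarz applied to $\int_R\sqrt{f_if_k}$, that for \emph{any} measurable set $R$ one has $\min\{\theta_i,\theta_k\}B_{ik}^2\le 2\big(\theta_i\!\int_{R}f_i+\theta_k\!\int_{\overline R}f_k\big)$; choosing $R=\overline{R_i}$ and summing over $i\neq k$ gives the bound without ever invoking the Bayes rule. Your approach instead first lower-bounds $P_{\textup{error}}$ by the Bayes error, then proves the pointwise inequality $\sum_i\theta_if_i-\max_i\theta_if_i\ge\sum_{i\neq k}\min\{\theta_if_i,\theta_kf_k\}$, and finally converts pairwise $\min$-integrals to $B_{ik}^2$ through the TV--Hellinger chain $\int\min\{f_i,f_k\}=1-\mathrm{TV}\ge 1-\sqrt{1-B_{ik}^2}\ge\tfrac12 B_{ik}^2$. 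The paper's route is a bit more direct and never needs the Bayes-error reduction (so it transparently applies to any decision rule); your route isolates the combinatorics in a clean pointwise lemma and connects to standard $f$-divergence inequalities, which some readers may find more conceptual. Either way, the constants match and both arguments reduce to Lemma~\ref{lema:APDgeneral} when $N=2$.
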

\begin{proof}
	We postpone the proof to Appendix~\ref{appendix:BCgeneral}.
\end{proof}

\begin{rek}[Loose upper bound]
The upper bound on the probability of error in~\eqref{eq:boundsonBgeneral} is loose in the sense that the summation might be greater than $1$. However, it becomes effective when $B_{ij}$'s are sufficiently small for any distinct pair of $i,j\in\mathbb{N}_{1}^N$, which is exactly what we aim for.
\end{rek}

The bounds on the probability of error in~\eqref{eq:boundsonBgeneral} reduce to those in~\eqref{eq:boundsonB} when $N=2$. Moreover, we observe that APD is achieved if and only if for every pair of $i,j\in\mathbb{N}_{1}^N$, we have the BC $B_{ij}=0$.

To formulate the APD problem for a general MMDP $\mathcal{M}$, we define the BC for each pair of MDPs in $\mathcal{M}$ under a policy $\bm{\pi}$, i.e., for $i,j\in\mathbb{N}_{1}^N$,
\begin{equation*}
B_{ij}(t,\bm{\pi})=\sum_{h_t\in \mathcal{H}_t}\sqrt{\mathbb{P}_i^{\bm{\pi}}(h_t)\mathbb{P}_j^{\bm{\pi}}(h_t)},
\end{equation*}
where $\mathcal{H}_t$ is the union set of histories in all MDPs in $\mathcal{M}$. In order to achieve APD for $\mathcal{M}$, we need to design a policy $\bm{\pi}$ such that 
the BCs satisfy $B_{ij}(\bm{\pi})=\lim_{t\rightarrow\infty}B_{ij}(t,\bm{\pi})=0$ for all $i,j\in\mathbb{N}_1^N$. In other words, the policy $\bm{\pi}$ must allow us to distinguish all pairs of MDPs in $\mathcal{M}$ simultaneously. 

We emphasize that finding a policy for each pair of MDPs in $\mathcal{M}$ separately need not work in general as these found policies may not be consistent with each other. Nevertheless, a necessary condition for APD for $\mathcal{M}$ is that, at least for every pair of MDPs, there exists one policy that achieves APD for this pair. Our solution method deals with $N$ MDPs altogether. 

\subsection{Base case: no identity-revealing transitions}\label{sec:basecase}
Before solving the general problem, we discuss a special case that can be addressed by applying a slightly modified Algorithm~\ref{alg:APD}.
Specifically, we consider an MMDP $\mathcal{M}$ where all MDPs in $\mathcal{M}$ have exactly the same transition structure and there are no identity-revealing transitions, i.e., for all $s,s'\in\mathcal{S}$ and $a\in\mathcal{A}_s$, either $\delta_i(s'\,|\,s,a)>0$ for all $i\in\mathbb{N}_{1}^N$ or $\delta_i(s'\,|\,s,a)=0$ for all $i\in\mathbb{N}_{1}^N$.

When all the MDPs in $\mathcal{M}$ have the same transition structure, the informative MDPs for any pair of MDPs in $\mathcal{M}$ also have the same structure. Moreover, the structure of these pairwise informative MDPs is the same as that of the MDPs themselves (except that the informative MDPs have two additional non-reachable terminal states). Therefore, we can use any of the MDPs in $\mathcal{M}$ in line~\ref{alg:IMDP} of Algorithm~\ref{alg:APD}. Based on the definition of the informative state-action pairs in Definition~\ref{def:informative_state_action}, we introduce the set of informative state-action pairs for every pair of MDPs $M_i, M_j\in\mathcal{M}$ as
\begin{multline*}
    \ISA_{ij}=\{(s,a)\,|\,\\s\in\mathcal{S},a\in\mathcal{A}_{s}, (s,a)\textup{ is informative in } \{M_i, M_j\}\}.
\end{multline*}
Then, we modify the definition of the set of informative MECs $\mathcal{C}^{\I}(M^{\I})$ in~\eqref{eq:informativeMEC} to be 
 \begin{multline}\label{eq:informativeMECbase}
	\mathcal{C}^{\I}(M^{\I})=\{(\mathcal{X},\mathcal{U})\in\mathcal{C}(M^{\I})\,|\, \forall i,j\in\mathbb{N}_{1}^N,\\ \exists s\in \mathcal{X},a\in \mathcal{U}_s, \textup{such that } (s,a)\in\ISA_{ij}\}.
\end{multline}
In~\eqref{eq:informativeMECbase}, we require the informative MECs in  $\mathcal{C}^{\I}(M^{\I})$ to contain at least one informative state-action pair from each set $\ISA_{ij}$.
%

The following lemma guarantees that with the modified definition for the informative MECs in~\eqref{eq:informativeMECbase}, Algorithm~\ref{alg:APD} solves the APD problem for an MMDP $\mathcal{M}$ when all MDPs in $\mathcal{M}$ have the same transition structure. 

\begin{lema}[Base case for APD for general MMDPs]\label{lema:basecase}
Given an MMDP $\mathcal{M}=\{M_i\}_{i\in\mathbb{N}_{1}^N}$ where all MDPs in $\mathcal{M}$ have the same transition structure, then Algorithm~\ref{alg:APD} with the modified definition for informative MECs~\eqref{eq:informativeMECbase}, determines in finite time the existence of a policy that achieves APD for $\mathcal{M}$ and synthesizes a policy when one exists.
\end{lema}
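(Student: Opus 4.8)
The plan is to reduce the $N$-model APD problem in the base case to a single reachability condition on one informative MDP, which is exactly the condition that \algref{alg:APD} tests, and then to lift the correctness argument already established for the binary case in \thomref{thm:alg}. First I would use \lemaref{lema:BCgeneral}: a policy $\bm{\pi}$ achieves APD for $\mathcal{M}$ if and only if $B_{ij}(\bm{\pi})=0$ for every pair $i,j\in\mathbb{N}_1^N$. For each such pair, \thomref{thm:main} applied to the binary MMDP $\{M_i,M_j\}$ characterizes $B_{ij}(\bm{\pi})=0$ as the event, of probability one under the measure induced by the pairwise informative MDP, that $\inft(h)$ meets $\ISA_{ij}$.

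The crucial simplification of the base case is structural: since all MDPs in $\mathcal{M}$ share the same transition structure and there are no identity-revealing transitions, every pairwise informative MDP has the same underlying graph as the single informative MDP $M^{\I}$ built in line~\ref{alg:IMDP}, and the terminal states $\bot_1,\bot_2$ are unreachable. As observed in \rekref{rek:dependencestructure}, each event $\{\inft(h)\cap\ISA_{ij}\neq\emptyset\}$ has probability zero or one depending only on this common structure and on the support of $\bm{\pi}$, so I can transfer all of these conditions to the single measure $\mathcal{P}_{\I}^{\bm{\pi}}$ of $M^{\I}$. Intersecting the finitely many probability-one events then yields that $\bm{\pi}$ achieves APD if and only if $\mathcal{P}_{\I}^{\bm{\pi}}$-almost surely $\inft(h)$ meets $\ISA_{ij}$ for all pairs simultaneously.

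Next I would invoke the standard fact that, under any policy, $\inft(h)$ is almost surely the set of state-action pairs of some end component of $M^{\I}$. An end component meeting every $\ISA_{ij}$ is contained in a unique MEC, which then also meets every $\ISA_{ij}$ and is hence informative in the modified sense of~\eqref{eq:informativeMECbase}; conversely, the uniform-support policy $\bm{\pi}^C$ inside an informative MEC $C=(\mathcal{X},\mathcal{U})$ induces an irreducible chain on $\mathcal{X}$ (by strong connectivity of the end component and closure of $\Supp(\delta^{\I}(\cdot\,|\,s,a))\subset\mathcal{X}$), so every pair $(s,a)$ with $s\in\mathcal{X},a\in\mathcal{U}_s$, and in particular one witness from each $\ISA_{ij}$, is visited infinitely often almost surely. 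Therefore APD is achievable from $s_{\init}$ if and only if some policy drives the process into $\mathcal{C}^{\I}(M^{\I})$ with probability one, i.e.\ if and only if $s_{\init}\in\mathcal{R}^{\textup{max}}$, which is precisely the test in line~\ref{alg:maximumreach}.

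From here the argument mirrors the proof of \thomref{thm:alg}: when $s_{\init}\in\mathcal{R}^{\textup{max}}$ the concatenation of the reach-policy $\bm{\pi}^0$ with the in-MEC policies $\{\bm{\pi}^C\}$ realizes the sufficient condition, while $s_{\init}\notin\mathcal{R}^{\textup{max}}$ rules out any APD policy by the necessity direction just described. Finite-time termination and polynomial cost carry over verbatim from the MEC-decomposition and maximal-reachability subroutines, the only change being the membership test~\eqref{eq:informativeMECbase}, which is still decidable in finite time. I expect the main obstacle to be the structural-transfer step of the second paragraph, namely making rigorous that the pairwise almost-sure conditions, each defined with respect to a \emph{different} pairwise informative measure, can be realized on the single chain induced by one policy on $M^{\I}$, together with verifying that a single in-MEC policy covers all sets $\ISA_{ij}$ at once rather than each pair in isolation.
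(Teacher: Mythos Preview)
Your proposal is correct and follows essentially the same route as the paper: reduce APD to the simultaneous vanishing of all pairwise BCs via \lemaref{lema:BCgeneral}, use the common transition structure to work on a single informative MDP, and then invoke the binary characterization (\thomref{thm:main}/\thomref{thm:alg}) together with the fact that the uniform in-MEC policy visits every state-action pair infinitely often. The paper's proof is terser and simply cites ``the proof of \thomref{thm:alg}'' for the pairwise reachability equivalence, whereas you spell out the structural-transfer step and the end-component argument more carefully; but the underlying argument is the same.
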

\begin{proof}
When all MDPs in $\mathcal{M}$ have the same transition structure, the informative MDPs for any pair of MDPs in $\mathcal{M}$ have the same structure and are the same across all pairs. We therefore only need to focus on one informative MDP of any pair of MDPs. Moreover, we also note that there are no transitions leading to the terminal states $\bot_1$ and $\bot_2$ in this informative MDP.
	
For any pair of MDPs $M_i, M_j\in\mathcal{M}$, by the proof of Theorem~\ref{thm:alg}, the BC $B_{ij}(\bm{\pi})=0$ if and only if the probability of reaching the set of informative MECs that contain at least one pair of informative state-action pair in $\ISA_{ij}$ is one. To achieve APD for $\mathcal{M}$, by~Lemma~\ref{lema:BCgeneral}, it is necessary and sufficient that for all $i,j\in\mathbb{N}_{1}^N$, we have $B_{ij}(\bm{\pi})=0$. Therefore, APD is achieved for $\mathcal{M}$ if and only if the probability of reaching the set of informative MECs that contain at least one pair of informative state-action pair in $\ISA_{ij}$ from each pair of $i,j\in\mathbb{N}_{1}^N$ is one. Moreover, the policy $\bm{\pi}^C$ visits all state-action pairs inside an MEC infinitely often. We therefore conclude the correctness of Algorithm~\ref{alg:APD}.
\end{proof}

\subsection{APD for general MMDPs}
Our construction of the informative MDP in the binary case exploits the fact that the identity of the underlying MDP is revealed immediately when an identity-revealing transition occurs. Therefore, we could introduce terminal states and direct all the identity-revealing transitions to them in the respective MDPs. However, for general MMDPs, we may not terminate the detection process even when identity-revealing transitions occur because those transitions may still be possible in multiple remaining MDPs. To address this issue, instead of introducing terminal states for the identity-revealing transitions, we solve the APD problem for a new MMDP after each identity-revealing transition. For instance, starting from the initial state $s_{\init}$, if the transitions to a state $s$ after taking an action $a\in\mathcal{A}_{s_{\init}}$ in $\mathcal{M}$ satisfy $\delta_{i}(s\,|\,s_{\init},a)=0$ for all $i\in \mathcal{N}_0$ and $\delta_{i}(s\,|\,s_{\init},a)>0$ for all $i\in \mathcal{N}_1$ with $\mathcal{N}_0\cup \mathcal{N}_1=\mathbb{N}_{1}^N$ and $\mathcal{N}_0\cap \mathcal{N}_1=\emptyset$, then after observing the transition $(s_{\init},a,s)$, we only need to focus on the MMDP $\mathcal{M}'=\{M_i\}_{i\in \mathcal{N}_1}$ with the initial state $s$. Note that $\mathcal{M}'$ is just another MMDP, and we could solve it if we had an algorithm for APD for general MMDPs. This observation suggests a recursive structure to our algorithm. Moreover, if we were able to determine whether there exists a policy that achieves APD for $\mathcal{M}'$ and compute it when one exists, we could modify the transition $(s_{\init},a,s)$ to $(s_{\init},a,\bot^{\g}_1)$ or $(s_{\init},a,\bot^{\g}_0)$ where $\bot^{\g}_1$ is a ``good'' terminal state indicating that it is possible to asymptotically perfectly detect the remaining MDPs in $\mathcal{M}'$ starting from $s$ and $\bot^{\g}_0$ is a ``bad'' one indicating the opposite.

The idea outlined above is the key to systematically addressing the identity-revealing transitions. The algorithm calls itself to solve APD problems for MMDPs that consist of fewer MDPs than the original MMDP. There are two base cases for the recursive part of the algorithm:  i) the binary MMDPs and ii) the case discussed in Section~\ref{sec:basecase}. We use a transition system to store the available transitions, whose definition is given below.

\begin{defi}[Transition systems]
A transition system $T$ is a tuple $T=(\mathcal{Y},\mathcal{B},\mathcal{T},y_{\init})$ where
\begin{enumerate}
\item $\mathcal{Y}$ is a finite set of states;
\item $\mathcal{B}=\cup_{y\in\mathcal{Y}}\mathcal{B}_y$ is the union of the finite sets of actions $\mathcal{B}_y$ available at the state $y\in\mathcal{Y}$; 
\item $\mathcal{T}\subset\mathcal{Y}\times \mathcal{B}\times \mathcal{Y}$ is a set of possible transitions;
\item $y_{\init}\in\mathcal{Y}$ is the initial state.
\end{enumerate}
\end{defi}

Transition systems are closely related to MDPs. For a given MDP, we can construct the underlying transition system by storing the transitions that have positive probabilities in the MDP. On the other hand, for a given transition system, we can define a set of MDPs compatible with it by assigning positive transition probabilities to the transitions. Moreover, since the concept of (maximal) end components for MDPs in Definition~\ref{def:MEC} depends solely on the transition structure of the MDPs, it carries over directly to transition systems. We also note that, by the discussion in Remark~\ref{rek:dependencestructure}, for a binary MMDP, the existence and synthesis of a policy that achieves APD can be completely determined by looking at the associated transition system of the informative MDP.

\SetKwInput{kwInit}{Init}
\begin{algorithm}[hbt!]
	\KwIn{An MMDP $\mathcal{M}=\{M_i\}_{i\in \mathcal{N}}$, an initial state $s_{\init}$ and a policy set $\bm{\Pi}$}
	\KwOut{A boolean variable indicating whether a policy for APD exists and a policy set $\bm{\Pi}$}
	\kwInit{Empty queues $Q_1$ and $Q_2$, a transition system $T=(\mathcal{Y},\mathcal{B},\mathcal{T})$ where $\mathcal{Y}=\{\bot^{\g}_0,\bot^{\g}_1\}$, $\mathcal{B}=\mathcal{B}_{\bot^{\g}_0}\cup\mathcal{B}_{\bot^{\g}_1}$ with $\mathcal{B}_{\bot^{\g}_0}=\{a^{\bot^{\g}_0}\}$ and $\mathcal{B}_{\bot^{\g}_1}=\{a^{\bot^{\g}_1}\}$, and $\mathcal{T}=\{(\bot^{\g}_i,a^{\bot^{\g}_i},\bot^{\g}_i)\}_{i\in\{0,1\}}$ }
	\SetKwBlock{Begin}{function}{end function}
	\Begin($\texttt{APD} {(} \{M_i\}_{i\in \mathcal{N}}, s_{\init}, \bm{\Pi} {)}$)
	{	
		\uIf{$|\mathcal{N}|==2$\label{alg2:binary1}}
		{
			$(FLAG,\bm{\Pi}_0)\gets \texttt{BiAPD}(\{M_i\}_{i\in \mathcal{N}},s_{\init})$\label{alg2:binary2}\\
			\Return{$(FLAG,\bm{\Pi}\cup\bm{\Pi}_0)$	\label{alg2:binary3}}		
		}
		\texttt{Insert}($Q_1, s_{\init}$)\label{alg2:inqueque}, label $s_{\init}$ as explored\\		
		\While{$Q_1$ is not empty\label{alg2:bfsstarts}}
		{
			$s\gets$\texttt{Retrieve}($Q_1$)
			$\mathcal{Y}\gets\mathcal{Y}\cup\{s\}$, $\mathcal{B}_s\gets\emptyset$\label{alg2:explorestart}\\
			\For{$a\in\mathcal{A}_s$}
			{
				$\mathcal{B}_s\gets\mathcal{B}_s\cup\{a\}$\\
				\For{$s'\in\mathcal{S}$}
				{
					$\mathcal{N}'\gets\{i\in\mathcal{N}\,|\,\delta_{i}(s'\,|\,s,a)>0\}$\\
					\uIf{$|\mathcal{N}'|>0$}
					{
						\uIf{$|\mathcal{N}'|==1$\label{alg2:singleton0}}
						{
							$\mathcal{T}\gets\mathcal{T}\cup\{(s,a,\bot^{\g}_1)\}$\label{alg2:singleton1}
						}
						\uElseIf{$|\mathcal{N}'|==2$\label{alg2:binaryN0}}
						{
							 $(FLAG,\bm{\Pi}_0)\gets \texttt{BiAPD}(\{M_i\}_{i\in\mathcal{N}'},s')$\\
							 $\bm{\Pi}\gets\bm{\Pi}\cup\bm{\Pi}_0$\\							 
							$\mathcal{T}\gets\mathcal{T}\cup\{(s,a,\bot^{\g}_{FLAG})\}$\label{alg2:binaryN1}
						
						}
						\uElseIf{$\mathcal{N}'==\mathcal{N}$\label{alg2:norevealing0}}
						{
							$\mathcal{T}\gets\mathcal{T}\cup\{(s,a,s')\}$\\
							\uIf{$s'$ is not explored}
							{
								\texttt{Insert}($Q_1, s'$)\\ label $s'$ as explored\label{alg2:norevealing1}
							}
						}
						\Else
						{							
							\texttt{Insert}($Q_2, (\mathcal{N}',(s,a,s'))$)\label{alg2:exploreend}
						}
					}
				}		
			}
		}
		\While{$Q_2$ is not empty\label{alg2:recursion0}}
		{
			$(\mathcal{N}',(s,a,s'))\gets$\texttt{Retrieve}($Q_2$)\\
			$(FLAG,\bm{\Pi})=\texttt{APD}(\{M_i\}_{i\in\mathcal{N}'},s',\bm{\Pi})$\label{alg2:recursivecall}\\
			$\mathcal{T}\gets\mathcal{T}\cup\{(s,a,\bot^{\g}_{FLAG})\}$\label{alg2:recursion1}

		}
	 	Find MECs $\mathcal{C}(T)$ and  informative MECs $\mathcal{C}^{\I}(T)$\label{alg2:transitionsys0}\\
	 	Compute the set of states that reach $\mathcal{C}^{\I}(T)$ w.p. 1: $\mathcal{R}^{\textup{max}}=\{s\in\mathcal{Y}\,|\,\mathbb{P}_{s,T}^{\textup{max}}(\textup{reach}(\mathcal{C}^{\I}(T)))=1\}$\label{alg2:Rmax}\\
	 	\uIf{$s_{\init}\notin\mathcal{R}^{\textup{max}}$}
	 	{
	 		\Return{$(0, \emptyset)$}
	 	}	
 	\Else
 {
 	 	Synthesize $\bm{\pi}_{\mathcal{N}}^0$ that reaches $\mathcal{C}^{\I}(T)$ w.p. $1$\label{alg2:io0}\\
 \For{$C=(\mathcal{X},\mathcal{U})\in\mathcal{C}^{\I}(T)$}
 {
 	\For{$s\in\mathcal{X}$}
 	{
 		$\bm{\pi}_{\mathcal{N}}^C(a\,|\,s)=\frac{1}{|\mathcal{U}_s|}$ for $a\in\mathcal{U}_s$
 	}
 }
 \Return{($1$, $\bm{\Pi}\cup\{\bm{\pi}_{\mathcal{N}}^0\}\cup\{\bm{\pi}_{\mathcal{N}}^C\}_{C\in\mathcal{C}^{\I}(T)}$)\label{alg2:io1}}
}
	}
	\caption{APD for general MMDPs}\label{alg:APDgeneral}
\end{algorithm}

We present the complete algorithm in Algorithm~\ref{alg:APDgeneral}. Algorithm~\ref{alg:APDgeneral} features two main algorithmic components: the breadth-first search (BFS) and recursion. During the algorithm, we build a transition system $T=(\mathcal{Y},\mathcal{B},\mathcal{T})$ that serves the role of the informative MDP for binary MMDPs, where $\mathcal{Y}$ is the state space, $\mathcal{B}=\cup_{y\in\mathcal{Y}}\mathcal{B}_y$ is the union set of actions, and $\mathcal{T}=\{(y,a,y')\,|\,y,y'\in\mathcal{Y},a\in\mathcal{B}_y\}$ is a set of allowable transitions. 
Then, the existence of a policy that achieves APD can be determined by analyzing the transition structure of $T$. The procedure is similar to and consistent with that for the binary case. In fact, since for an MDP $M=(\mathcal{S},\mathcal{A},\delta,s_{\init})$, there exists a unique transition system $T=(\mathcal{S},\mathcal{A},\mathcal{T})$ associated with $M$, where $\mathcal{T}=\{(s,a,s')\,|\,s,s'\in\mathcal{S},a\in\mathcal{A}_s,\delta(s'\,|\,s,a)>0\}$, the informative MDP $M^{\I}$ in line~\ref{alg:IMDP} of Algorithm~\ref{alg:APD} can be replaced by its associated transition system as hinted by the discussions in Remark~\ref{rek:dependencestructure}.

 The detailed workflow of Algorithm~\ref{alg:APDgeneral} is as follows. The algorithm first decides whether the input is a binary MMDP and calls Algorithm~\ref{alg:APD} if it is (lines~\ref{alg2:binary1}-\ref{alg2:binary3}). Otherwise, the initial state $s_{\init}$ enters the queue $Q_1$ and the BFS begins (lines~\ref{alg2:inqueque}-\ref{alg2:bfsstarts}). We explore all the actions and the consequent transitions available at the state $s$ popped out from $Q_1$ (lines~\ref{alg2:explorestart}-\ref{alg2:exploreend}). There are a few possibilities.
 \begin{enumerate}[label=(\roman*)]
\item The transition $(s,a,s')$ is only available in exactly one MDP (lines~\ref{alg2:singleton0}-\ref{alg2:singleton1}), in which case we add a transition $(s,a,\bot^{\g}_1)$ to the transition system indicating that if such a transition occurs, APD is achieved; 
\item The transition $(s,a,s')$ is available in two MDPs (lines~\ref{alg2:binaryN0}-\ref{alg2:binaryN1}), in which case we call Algorithm~\ref{alg:APD} to decide whether a policy exists for the corresponding binary MMDP and add a transition $(s,a,\bot^{\g}_1)$ or $(s,a,\bot^{\g}_0)$ to $\mathcal{T}$ depending on the outcome of the binary algorithm;
\item The transition $(s,a,s')$ is available in all MDPs (lines~\ref{alg2:norevealing0}-\ref{alg2:norevealing1}), in which case the state $s'$ enters $Q_1$ and needs to be further explored;
\item\label{itm:recursive} The transition $(s,a,s')$ is available in more than two but not all MDPs (line~\ref{alg2:exploreend}), in which case we store the possible MDPs and the current transition $(s,a,s')$ in $Q_2$.
 \end{enumerate}
By the end of the BFS phase, the state space $\mathcal{Y}$ of the transition system consists of two terminal states $\bot^{\g}_0$ and $\bot^{\g}_1$, and states in $\mathcal{S}$ that we can reach from $s_{\init}$ in all MDPs in $\mathcal{M}$ following the same history. To deal with case~\ref{itm:recursive} encountered during the BFS, we call \texttt{APD} recursively (lines~\ref{alg2:recursion0}-\ref{alg2:recursion1}). Depending on the returns of the recursive calls, we further update the transition system. After obtaining the returns from the recursive calls, we find the MECs and informative MECs of the transition system $T$ (line~\ref{alg2:transitionsys0}), where the informative MECs consists of those defined in~\eqref{eq:informativeMECbase} for the states in $\mathcal{Y}$ and the singleton $(\bot^{\g}_1,a^{\bot^{\g}_1})$. Finally, we decide if it is possible to visit the informative state-action pairs infinitely often and find the corresponding policies when they exist (lines~\ref{alg2:io0}-\ref{alg2:io1}). 

Theorem~\ref{thm:alggeneral} guarantees the correctness of Algorithm~\ref{alg:APDgeneral}.

\begin{thom}[Correctness of Algorithm~\ref{alg:APDgeneral}]\label{thm:alggeneral}
	Given an MMDP $\mathcal{M}=\{M_i\}_{i\in\mathbb{N}_{1}^N}$, Algorithm~\ref{alg:APDgeneral} determines in finite time the existence of a policy that achieves APD for $\mathcal{M}$ and synthesizes a policy when one exists. 
\end{thom}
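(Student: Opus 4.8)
The plan is to argue by induction on the number $N=|\mathcal{N}|$ of candidate MDPs, reducing the correctness of Algorithm~\ref{alg:APDgeneral} to a reachability problem on the transition system $T$ that the algorithm builds, exactly as Theorem~\ref{thm:main} reduces the binary problem to reaching informative MECs of the informative MDP. The base case $N=2$ is immediate: the algorithm returns the output of \texttt{BiAPD} (lines~\ref{alg2:binary1}--\ref{alg2:binary3}), whose correctness is Theorem~\ref{thm:alg}. For the inductive step I would assume the claim holds for every MMDP with fewer than $N$ MDPs and analyze the two phases of the algorithm.

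First I would establish termination. The BFS phase (lines~\ref{alg2:bfsstarts}--\ref{alg2:exploreend}) inserts each state of $\mathcal{S}$ into $Q_1$ at most once because of the explored labeling, and for each dequeued state only finitely many actions and successors are examined, so $Q_1$ empties after finitely many steps and $Q_2$ collects finitely many entries. Every recursive call in the $Q_2$ phase (line~\ref{alg2:recursivecall}) and every call to \texttt{BiAPD} (line~\ref{alg2:binaryN1}) is made on a sub-MMDP $\{M_i\}_{i\in\mathcal{N}'}$ with $2\le|\mathcal{N}'|<N$, since those branches are reached only when $\mathcal{N}'\neq\mathcal{N}$; by the inductive hypothesis (respectively Theorem~\ref{thm:alg}) each such call terminates, so the whole procedure terminates in finite time.

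For correctness I would prove the key equivalence that a policy $\bm{\pi}$ achieves APD for $\mathcal{M}$ if and only if the process induced by $\bm{\pi}$ on $T$ reaches the informative MECs $\mathcal{C}^{\I}(T)$ with probability one from $s_{\init}$. By Lemma~\ref{lema:BCgeneral}, APD for $\mathcal{M}$ is equivalent to $B_{ij}(\bm{\pi})=0$ for every pair $i,j$. I would decompose each infinite history at the first identity-revealing transition, i.e. the first transition $(s,a,s')$ with $\mathcal{N}'\subsetneq\mathcal{N}$. As long as the history follows transitions common to all $N$ MDPs ($\mathcal{N}'=\mathcal{N}$), it stays in the portion of $\mathcal{S}$ faithfully copied into $T$ (lines~\ref{alg2:norevealing0}--\ref{alg2:norevealing1}), and as in Lemma~\ref{lema:basecase} all pairwise BCs are driven to zero precisely when the state-action pairs visited infinitely often contain an informative pair of $\ISA_{ij}$ for every pair $i,j$, which is exactly the membership condition~\eqref{eq:informativeMECbase} defining $\mathcal{C}^{\I}(T)$. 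At the first revealing transition the true model is confined to $\mathcal{N}'$, and the residual task is APD for $\{M_i\}_{i\in\mathcal{N}'}$ from $s'$; its solvability is decided by the corresponding sub-call and recorded as the terminal $\bot^{\g}_1$ (solvable, hence informative and counted in $\mathcal{C}^{\I}(T)$) or $\bot^{\g}_0$ (unsolvable, hence non-informative), while the singleton case $|\mathcal{N}'|=1$ immediately determines the model and is correctly routed to $\bot^{\g}_1$ (lines~\ref{alg2:singleton0}--\ref{alg2:singleton1}). By the inductive hypothesis these terminal labels faithfully reflect the achievability of APD on the sub-problems, so reaching $\mathcal{C}^{\I}(T)$ with probability one is equivalent to being able to simultaneously zero out every pairwise BC, i.e. to APD for $\mathcal{M}$.

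Granting this equivalence, the remaining steps mirror the binary case. The test $s_{\init}\in\mathcal{R}^{\textup{max}}$, computed in line~\ref{alg2:Rmax} via the standard maximal-reachability algorithm, decides existence, and when it holds the synthesized policy attains APD: the reach policy $\bm{\pi}_{\mathcal{N}}^0$, the uniform in-MEC policies $\bm{\pi}_{\mathcal{N}}^C$, and the accumulated sub-policies in $\bm{\Pi}$ act on disjoint history regimes (before a revealing transition one follows $\bm{\pi}_{\mathcal{N}}^0$ and then $\bm{\pi}_{\mathcal{N}}^C$ inside the reached MEC; after the first revealing transition to some $\mathcal{N}'$ one switches to the stored sub-policy), so the history uniquely selects the active component and they compose into a single well-defined history-dependent policy. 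I expect the main obstacle to be the rigorous justification of the equivalence above, specifically arguing that the recursive summarization into $\bot^{\g}_0/\bot^{\g}_1$ is sound --- namely that restricting attention to the surviving models $\mathcal{N}'$ after a revealing transition loses no information for the detection of $\mathcal{M}$, and that the simultaneous-distinguishability condition~\eqref{eq:informativeMECbase} imposed on MECs of the hybrid object $T$, which mixes genuine common-structure states with the recursion-summarizing terminals, indeed characterizes the vanishing of all pairwise BCs at once.
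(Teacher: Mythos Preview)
Your proposal is correct and follows essentially the same route as the paper: induction on $|\mathcal{N}|$, termination via finiteness of the BFS and the strictly smaller recursive calls, and correctness by reducing APD for $\mathcal{M}$ to the almost-sure reachability of the informative MECs of $T$, with the recursive summarization into $\bot^{\g}_0/\bot^{\g}_1$ validated by the induction hypothesis. The paper isolates one ingredient you leave implicit, namely a short lemma (Lemma~\ref{lema:MECinTandMI}) stating that any non-terminal MEC of $T$ is an MEC of every pairwise informative MDP $M^{\I}_{ij}$; this is exactly the fact you invoke when you say the common-structure portion of $T$ behaves as in Lemma~\ref{lema:basecase}, and making it explicit is what cleanly justifies that the simultaneous condition~\eqref{eq:informativeMECbase} on MECs of $T$ characterizes the vanishing of all pairwise BCs.
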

\begin{proof}
We postpone the proof to Appendix~\ref{sec:proofalggeneral}.
\end{proof}

A few remarks on Algorithm~\ref{alg:APDgeneral} are in order.

\begin{rek}[Policies with memory]\label{rek:memorypolicy}
A distinct feature of the policies that achieve APD for general MMDPs, if they exist, is that they have memory. In particular, the policies depend on the current state and the current set of MDPs that are ``active''. On the other hand, if we augment the state variable in $\mathcal{S}$ with subsets of $\mathbb{N}_{1}^N$, then we obtain memoryless policies.
\end{rek}

\begin{rek}[Time complexity and improvement]
Given an input MMDP $\{M_i\}_{i\in\mathbb{N}_1^N}$, we may need to examine almost all proper subsets of $\mathbb{N}_1^N$ through recursive calls. Therefore,  Algorithm~\ref{alg:APDgeneral} runs with time complexity that is  exponential with the number of MDPs in the input MMDP. To avoid repeated recursive calls with the same input, we can keep track of all recursive calls and retrieve the results directly before executing line~\ref{alg2:recursivecall} (memoization).
\end{rek}


\begin{rek}[Special cases]
There are two interesting special cases that can be handled by Algorithm~\ref{alg:APDgeneral}: i) detecting a specific MDP in $\mathcal{M}$; ii) the set of MDPs in $\mathcal{M}$ is divided into two groups and detecting which group contains the ground truth MDP. To address these two cases, one only needs to modify the BFS part (deciding the transitions to $\bot_1^{\g}$ and $\bot_0^{\g}$) and the definition of informative MECs.
\end{rek}

We finally note that due to the policies' dependence on the current set of active MDPs as discussed in Remark~\ref{rek:memorypolicy}, the computation of the BC $B_{ij}(t,\bm{\pi})$ for $M_i,M_j\in\{M_i\}_{i\in\mathcal{N}}$ and the policy $\bm{\pi}$ returned by Algorithm~\ref{alg:APDgeneral} that achieves APD, is slightly more involved than the binary case. Nevertheless, since the policies become memoryless when considering the augmented state space, we can compute the BC similarly.

\section{Numerical examples}\label{sec:numerical}
We demonstrate the effectiveness of our algorithms through two numerical examples, i.e., intruder detection in urban environments and an MDP-based recommendation system.

\subsection{Bayesian belief updates}
Given an MMDP $\mathcal{M}=\{M_i\}_{i\in\mathbb{N}_{1}^N}$ with the initial state $s_{\init}$ and the estimated prior probabilities $q_i$ of each MDP $M_i\in\mathcal{M}$, we can calculate the posterior probability for $M_i$ based on the actions taken and observed consequent transitions according to the Bayes' rule. Specifically, let $b(t,s_{t})\in\Delta_{N}$ be the \emph{belief vector} over the set of MDPs in $\mathcal{M}$ at step $t\in\mathbb{N}_{\geq0}$, where $b_i(t,s_t)$ is the posterior probability of $M_i$ at step $t$, then we can recursively update $b(t,s_t)$ as follows,
\begin{equation}\label{eq:bayesianupdate}
b_i(t+1,s_{t+1}) = \frac{b_i(t,s_t)\delta_i({s_{t+1}\,|\,s_t,a_t})}{\sum_{j=1}^Nb_j(t,s_t)\delta_j({s_{t+1}\,|\,s_t,a_t})},
\end{equation}
where $b_i(0,s_{0})=b_i(0,s_{\init})=q_i$ for $i\in\mathbb{N}_{1}^N$. The evolution of the belief vectors in~\eqref{eq:bayesianupdate} depends on the realized histories $(s_0,a_0,s_1,\cdots)$. However, the theories developed in this paper guarantee that under a policy that achieves APD for $\mathcal{M}$, if one exists, the belief vector $b(t,s_t)$ converges to the standard unit vector $\mathbbb{e}_{i'}$ when $M_{i'}\in\mathcal{M}$ is the ground truth MDP that generates the histories.

\subsection{Intruder detection}
Our first example concerns intruder detection in urban environments. We consider an $8\times8$ grid world representing an urban area, as shown in Fig.~\ref{fig:environment}. The human target in the environment can be of two types: a normal person or an intruder. We model the behavior of these two types of agents by two MDPs $M_{\textup{normal}}$ and $M_{\textup{intruder}}$, where the state space of the MDPs consists of possible locations of the agents. The green region in Fig.~\ref{fig:environment} stands for some public facility, e.g., a park, in the environment. Outside the green region, the two types of  agents have the same behavior and gradually move towards the green region randomly. Inside the green region, there are two actions available for monitoring the area:  passive observation and active surveillance, to which the two types of agents respond differently. Specifically, the normal person stays inside the green region with high probability no matter what action is applied. In contrast, the intruder has the same behavior as the normal agent when the passive observation is in effect, but he/she  leaves the region with high probability when the region is under active surveillance. The modeling captures the behavior of an intruder who intends to investigate the green region while avoiding being identified by a surveillance system. After the agents leave the green region, they will reach it again according to their random behavior outside the region.

\begin{figure}[http]
	\centering
	\includegraphics[scale=0.44]{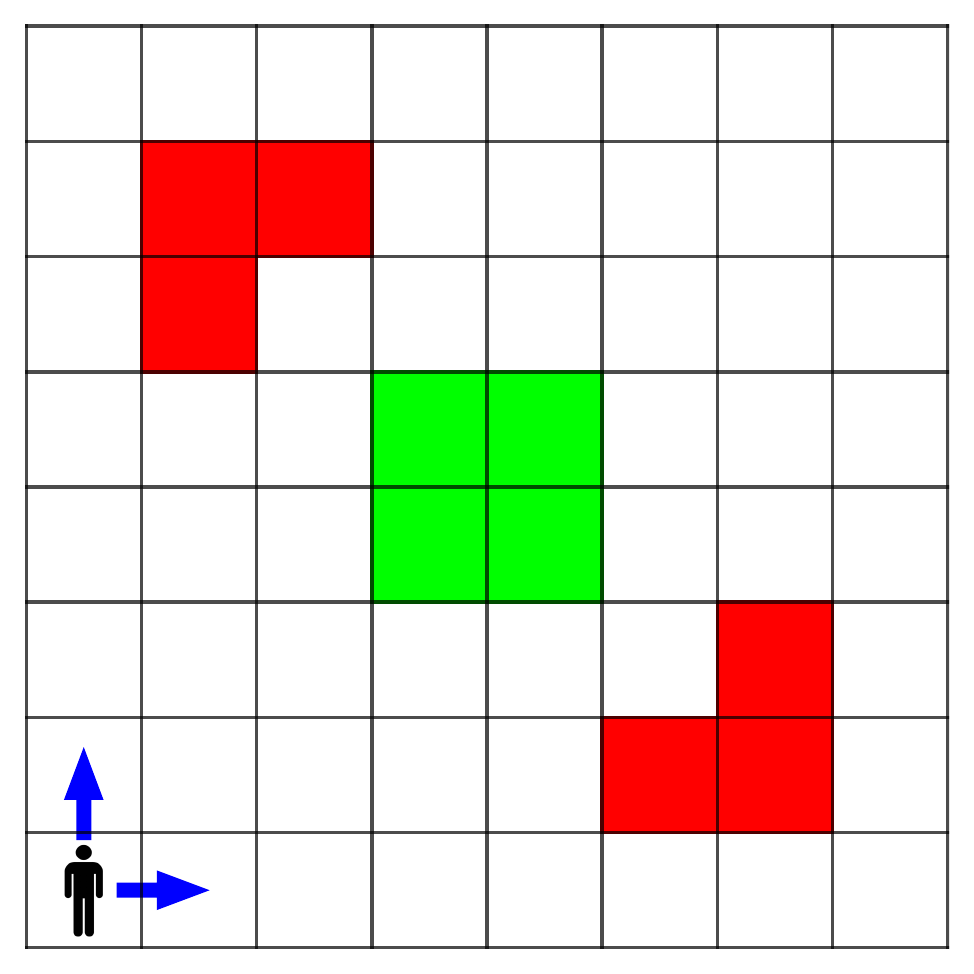}
	\includegraphics[scale=0.44]{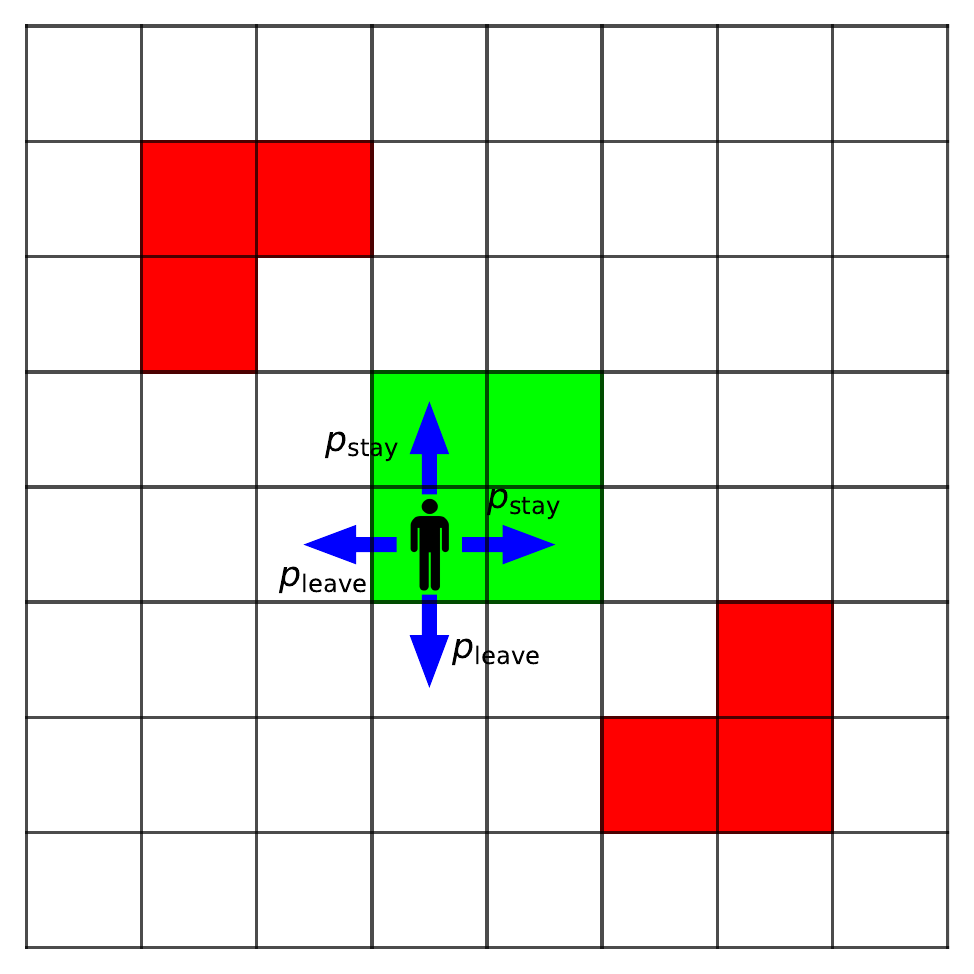}
	\caption{An urban environment with a human target. The person tends to reach the green region and can move to one of the neighboring locations randomly at each step. The red region represents obstacles where the agents cannot move into. Inside the green region, the behaviors of the two types of agents are different.}\label{fig:environment}
\end{figure}

In our experiment, we choose $p_{\rm stay}=0.35$ and $p_{\rm leave}=0.15$ in Fig.~\ref{fig:environment} for the normal person regardless of the actions and the intruder when the passive observation is in effect. Under the active surveillance, the intruder leaves the region with probability $0.35$ and stays with probability $0.15$. We use Algorithm~\ref{alg:APD} to synthesize a policy $\bm{\pi}$ that achieves APD for the binary MMDP $\mathcal{M}=\{M_{\rm normal}, M_{\rm intruder}\}$\footnote{Inside the green region, we adopt a slightly different policy from that given by Algorithm~\ref{alg:APD}. Instead of taking passive observation and active surveillance with equal probability, we always take the surveillance action so as to identify the target more rapidly.}. To simulate the detection process, we uniformly randomly pick one of the MDPs in $\mathcal{M}$, apply the policy $\bm{\pi}$ and update the belief vector according to~\eqref{eq:bayesianupdate}. The evolution of the belief vectors for four realized scenarios are shown in Fig.~\ref{fig:intruder}.

\begin{figure}[http]
	\centering
	\includegraphics[scale=0.29]{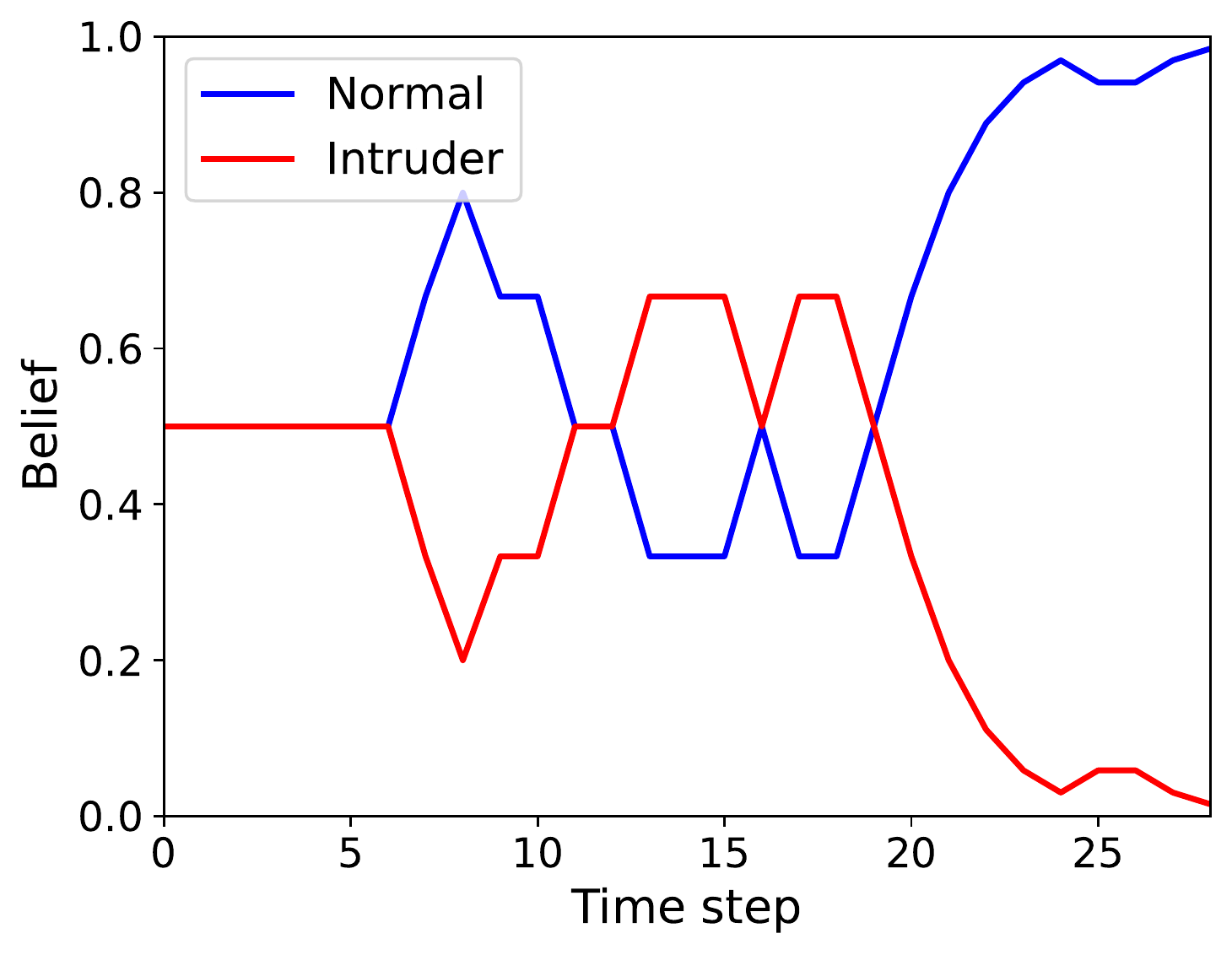}
	\includegraphics[scale=0.29]{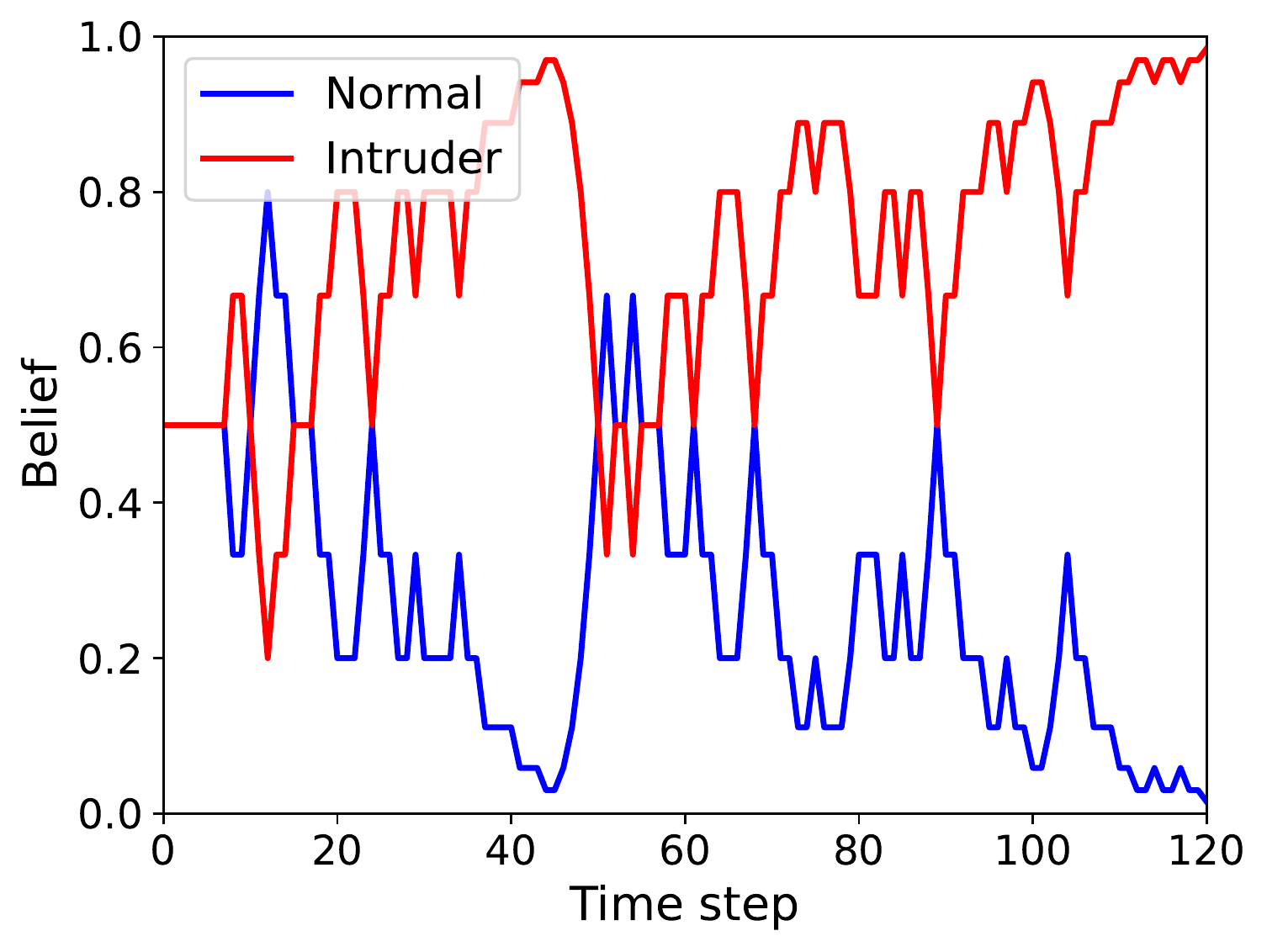}
	\includegraphics[scale=0.29]{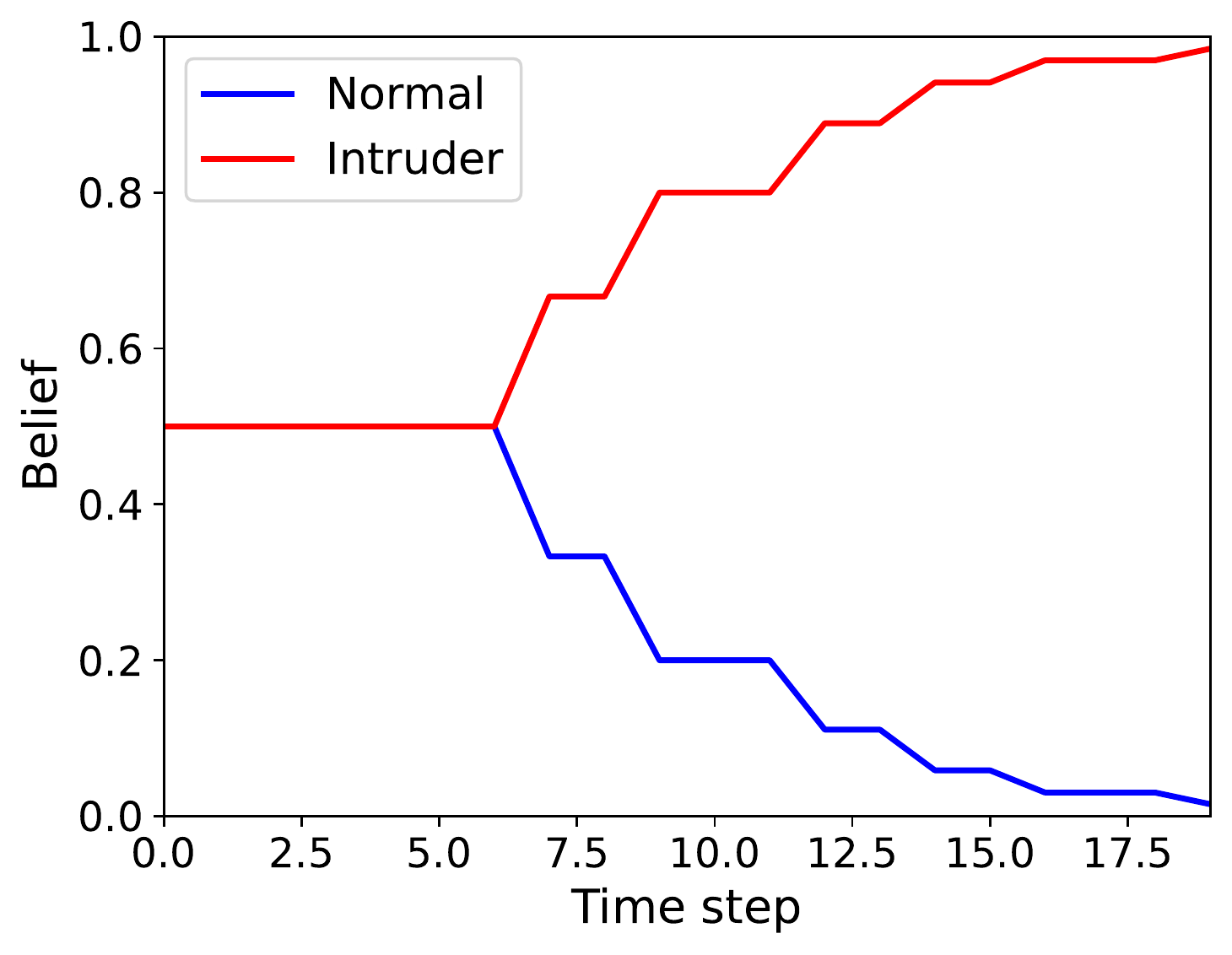}
	\includegraphics[scale=0.29]{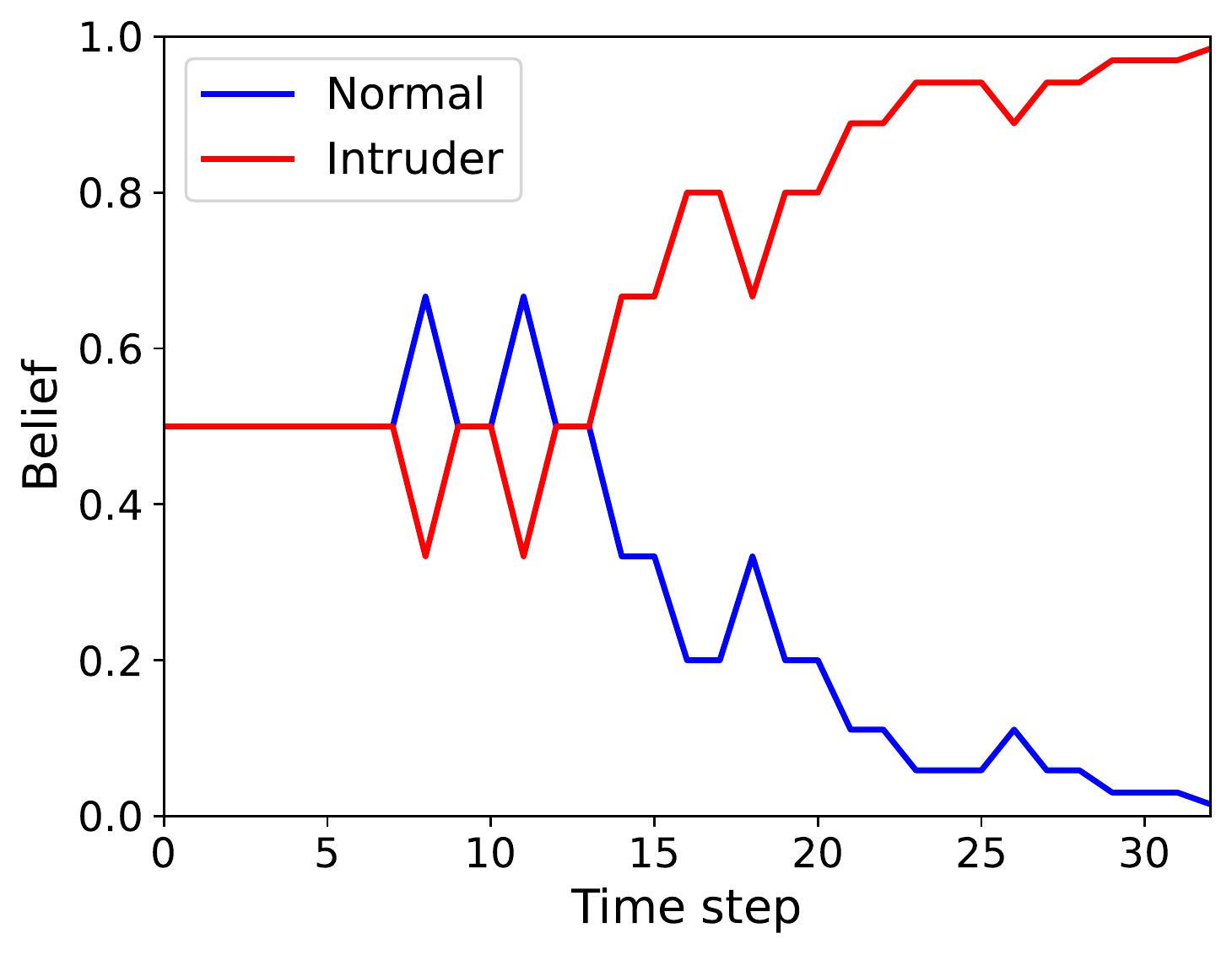}	
	\caption{The evolution of beliefs over agent types in the intruder detection.}\label{fig:intruder}
\end{figure}


In all the scenarios shown in Fig~\ref{fig:intruder}, the agent starts from the bottom left corner of the environment, and the belief update stops when the belief over one of the agent types is greater than $0.98$.  From Fig~\ref{fig:intruder}, we observe that the belief vector eventually correctly indicates the agent type in all cases despite experiencing some transient fluctuations resulting from the intrinsic randomness of the agent's movement and reactions. The belief vector stays constant at the beginning because the two agents behave the same before they reach the green region. We also plot the upper bound in~\eqref{eq:boundsonB} for the probability of error in Fig.~\ref{fig:upperbound} when the estimated and true priors are equal. At the beginning, when the target has not reached the green region, the BC does not change. The plot is consistent with Lemma~\ref{lema:expconvergence}, i.e., the bound decreases exponentially fast with the length of observations. We also note that it is possible to deal with multiple targets in the area simultaneously by independently running one belief vector for each target.

\begin{figure}[http]
	\centering
	\includegraphics[scale=0.44]{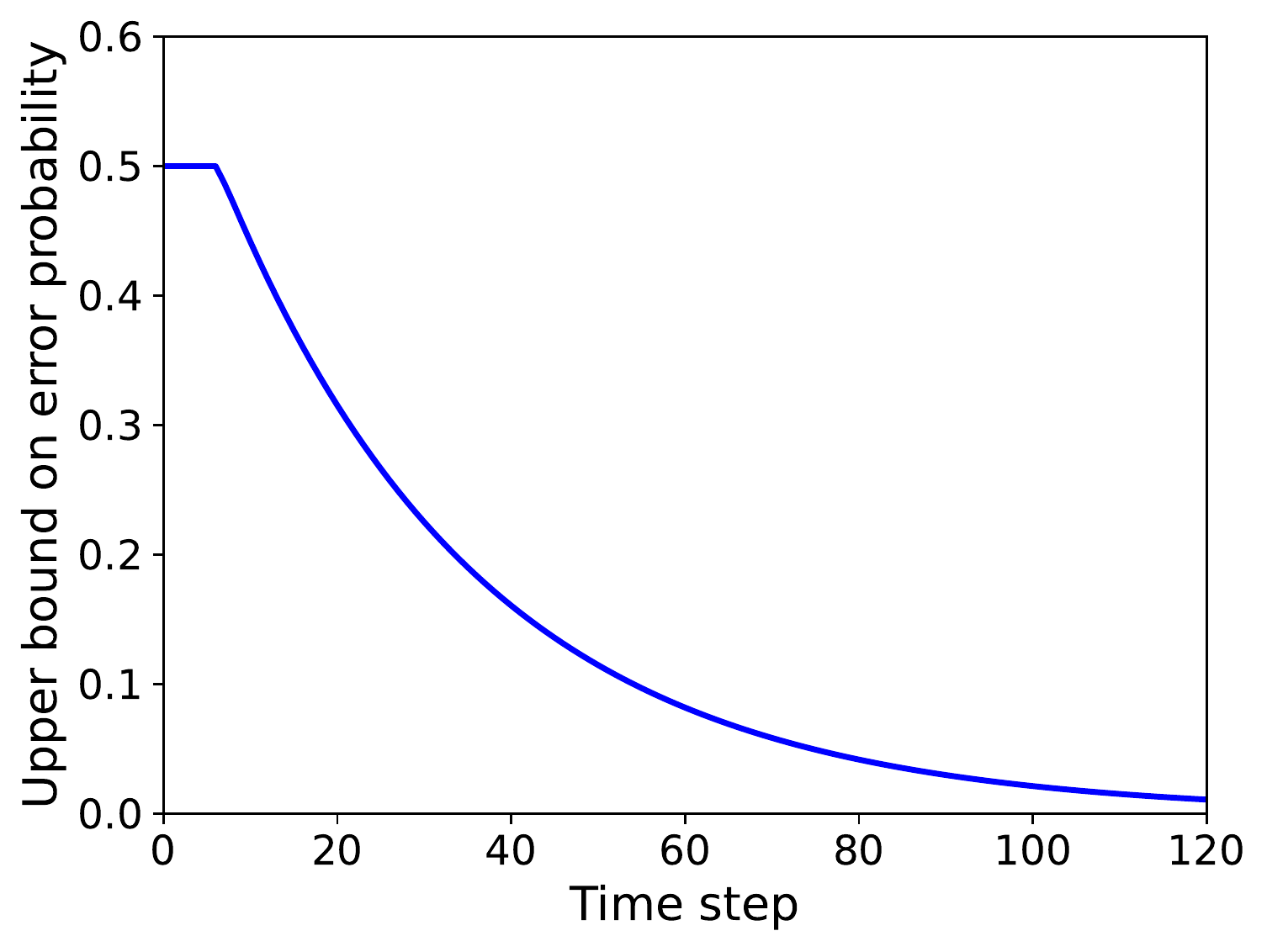}
	\caption{The upper bound for the error probability of detecting the agent type in the intrusion detection.}\label{fig:upperbound}
\end{figure}

\subsection{MDP-based recommendation systems}
In an MDP-based recommendation system \cite{GS-DH-RIB:05, KC-MC-DK-PN-AR:20}, the items recommended to customers are strategically selected to account for recommendations' long-term effects. However, a single MDP model may not be adequate to capture different types of customers' purchasing behaviors. This section shows how the algorithm developed in Section~\ref{sec:general} can be applied to design a recommendation strategy that identifies the customer type based on the observed purchasing sequence.

We consider a recommendation system with $10$ items, and the system selects one item to recommend to a customer at each step. The state space of the MDP consists of all possible ordered past purchase histories of length two, resulting in 111 total states (including one state representing the empty purchase history and $10$ states representing histories of single purchases). We further consider $N=6$ customer types, each of which has a randomly generated preference ranking over the items (the first item on the preference list is the most preferred). Let $v\in\Delta_{10}$ be a probability vector uniformly sampled from the probability simplex $\Delta_{10}$. When there are no recommendations, a customer buys the $i$-th ranked item on his/her preference list with probability being equal to the $i$-th largest element in $v$.  When an item is recommended, the probability of buying the recommended item increases by a multiplicative factor of $1+\alpha_k$ where $k\in\mathbb{N}_{1}^N$ is the customer type and $0\leq\alpha_k\leq\frac{1}{\max_{i}\{v_i\}}-1$ models the customer's sensitivity to recommendations; the probabilities of buying other non-recommended items are scaled down accordingly. We also introduce one identity-revealing transition for each customer type by setting the probability of buying the lowest ranked item under some recommendation at some state to be zero, where the recommendation and the state are randomly selected. To make the detection process slightly more difficult, we assume that the customers have the same sensitivity parameter $\alpha_k=\alpha$ for $k\in\mathbb{N}_{1}^N$, and we generate $\alpha$ uniformly randomly from $[0,\frac{1}{\max_{i}\{v_i\}}-1]$.

\begin{figure*}[http]
	\centering
	\includegraphics[scale=0.5]{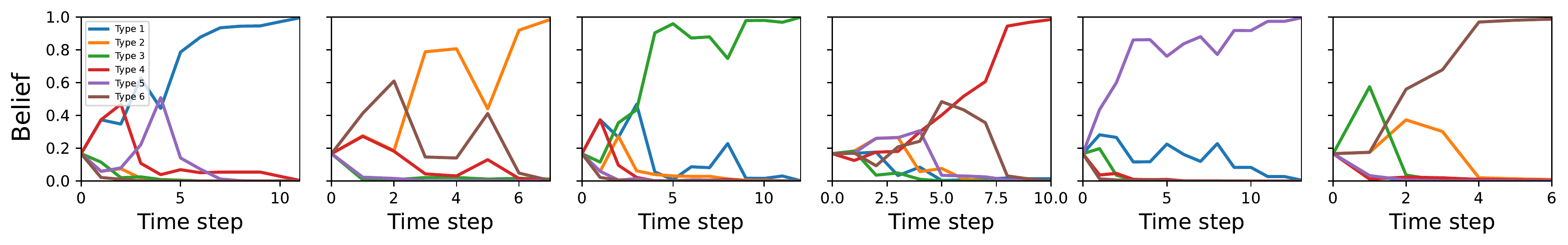}
	\caption{The evolution of beliefs over customer types in the MDP-based recommendation system.}\label{fig:evolutionMDP}
\end{figure*}

Fig.~\ref{fig:evolutionMDP} shows the evolution of beliefs over the customer types where each subplot corresponds to one specific customer type being the ground truth MDP. In all the realized scenarios, the recommendation system successfully detects the customer type after observing the customers' reactions to a few recommendations. Similar to the binary case, the upper bound on the error probability of the Bayesian detection goes to zero exponentially fast as shown in~Fig.~\ref{fig:upperbound_rec} (we cap the upper bound at $1$ when the upper bound is greater than $1$).

\begin{figure}[http]
	\centering
	\includegraphics[scale=0.44]{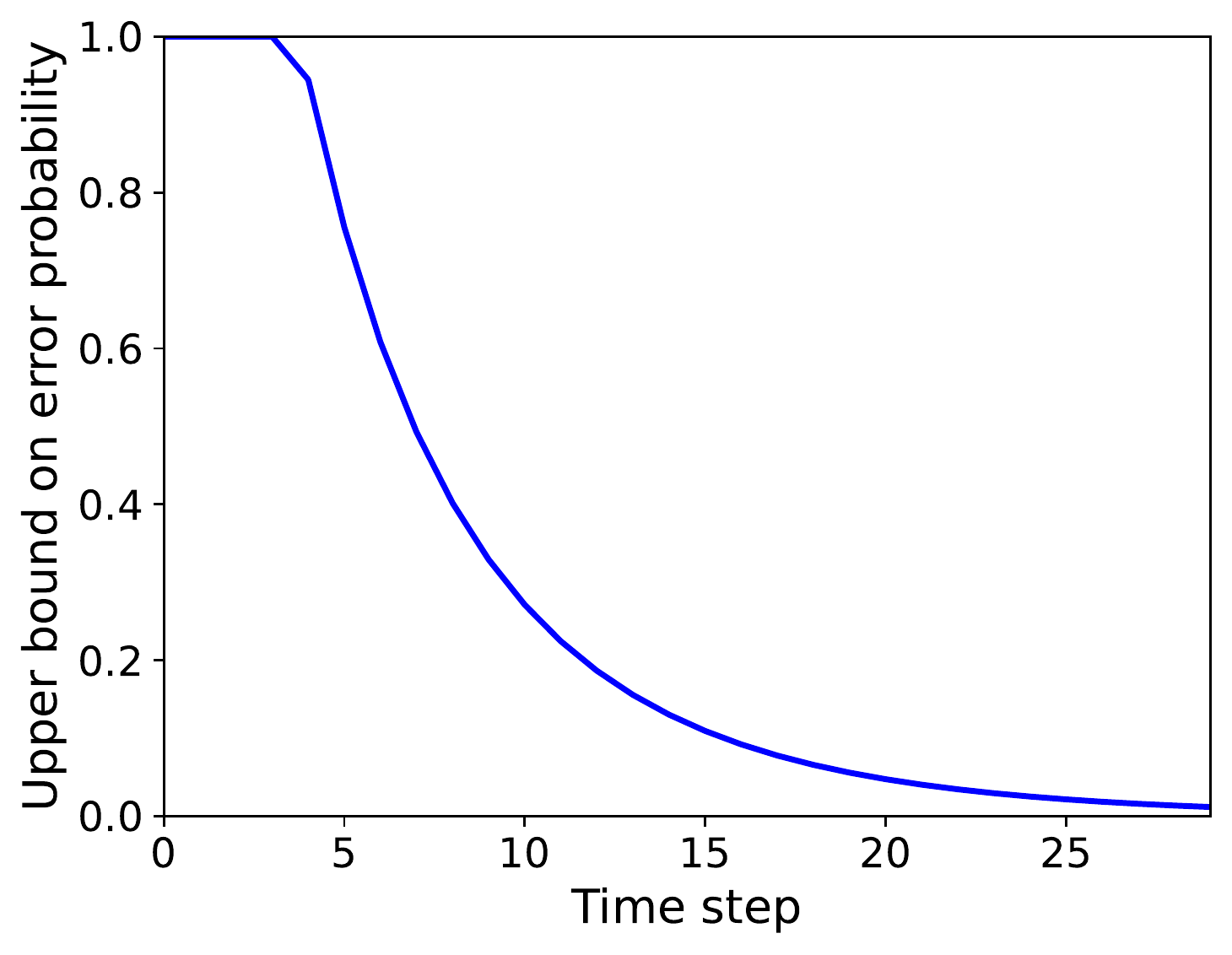}
	\caption{The upper bound for the error probability of detecting the customer type in the MDP-based recommendation system.}\label{fig:upperbound_rec}
\end{figure}

\section{Conclusion}\label{sec:conclusion}
We studied the policy synthesis problem for achieving asymptotically perfect detection (APD) for multi-model MDPs (MMDPs).  We started with the binary case where the MMDPs consist of two MDPs and derived a necessary and sufficient condition for the existence of policies that achieve APD. We then developed an efficient polynomial-time algorithm that synthesizes policies that achieve APD or determines they do not exist. We finally extended the results to the general case of MMDPs and proposed a similar policy synthesis algorithm.

For future work, we will investigate the intrinsic complexity of the APD problem for general MMDPs. On the other hand, APD might be a too strong requirement for policies to exist, and we will explore other appropriate notions of detection.

\appendix
\subsection{Proof of Theorem~\ref{thm:main}}\label{sec:proofmainthm}

Since the detection problem for $\mathcal{M}$ is the equivalent to that for $\mathcal{M}^{\p}$, we will focus on $\mathcal{M}^{\p}$ in the following. With a slight abuse of notation, we will use $\mathcal{P}_{i}^{\bm{\pi}}$ to also denote the probability measure induced by $\delta_i^{\p}$ and the policy $\bm{\pi}$ over the measurable space  $(\mathcal{H}^{\I},\mathcal{Q}^{\I})$ for $i\in\{1,2\}$.

Before diving into the proof of Theorem~\ref{thm:main}, we first introduce some notation and useful lemmas. We partition the set of informative state-action pairs into disjoint subsets as follows
\begin{equation*}
	\ISA^{\p}=\ISA^{\p}_{\textup{a}}\cup\ISA^{\p}_{0}\cup\ISA^{\p}_{1}\cup\ISA^{\p}_{2}\cup\ISA^{\p}_{3},
\end{equation*}
where 
\begin{align*}
	\ISA^{\p}_{\textup{a}}&=\{(\bot_1,a^{\bot_1}),(\bot_2,a^{\bot_2})\},\\		\ISA^{\p}_{0}&=\{(s,a)\in\ISA^{\p}\,|\,\delta^{\p}_{1}(\bot_1\,|\,s,a)=0,\delta^{\p}_{2}(\bot_2\,|\,s,a)=0\},\\
	\ISA^{\p}_{1}&=\{(s,a)\in\ISA^{\p}\,|\,\delta^{\p}_{1}(\bot_1\,|\,s,a)>0,\delta^{\p}_{2}(\bot_2\,|\,s,a)=0\},\\
	\ISA^{\p}_{2}&=\{(s,a)\in\ISA^{\p}\,|\,\delta^{\p}_{1}(\bot_1\,|\,s,a)=0,\delta^{\p}_{2}(\bot_2\,|\,s,a)>0\},\\
	\ISA^{\p}_{3}&=\{(s,a)\in\ISA^{\p}\,|\,\delta^{\p}_{1}(\bot_1\,|\,s,a)>0,\delta^{\p}_{2}(\bot_2\,|\,s,a)>0\}.
\end{align*}
Moreover, we let $\ISA^{\p}_{\textup{tran}} = \ISA^{\p}_{1}\cup\ISA^{\p}_{2}\cup\ISA^{\p}_{3}$, $\ISA^{\p}_{\textup{tran},1} = \ISA^{\p}_{1}\cup\ISA^{\p}_{3}$ and $\ISA^{\p}_{\textup{tran},2} = \ISA^{\p}_{2}\cup\ISA^{\p}_{3}$.

We next introduce the concept of orthogonal measures. It turns out that the BC between two probability measures is zero if and only if these two measures are orthogonal. 

\begin{defi}[Orthogonal measures {\cite[Section 2]{SK:48}}]
Let $(\Omega,\mathcal{F})$ be a measurable space. Two probability measures $\mu_1$ and $\mu_2$ over $(\Omega,\mathcal{F})$ are orthogonal if there exists a measurable set $F\in\mathcal{F}$ such that $\mu_1(F)=1$ and $\mu_2(\overline{F})=1$.
\end{defi}


The following lemma connects the orthogonality of measures with the BC.
\begin{lema}[BC and orthogonal measures {\cite[Section 4]{SK:48}}]\label{lema:bcorthogonal}
Let $(\Omega,\mathcal{F})$ be a measurable space.  The BC between two probability measures $\mu_1$ and $\mu_2$ over $(\Omega,\mathcal{F})$ is zero if and only if they are orthogonal.
\end{lema}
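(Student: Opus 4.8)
The plan is to express the integral defining the BC with respect to a single dominating measure and then read off orthogonality from the pointwise behavior of the two densities. First I would fix a common dominating measure, the natural choice being $\lambda = \mu_1 + \mu_2$, so that both $\mu_i \ll \lambda$; writing $p_i = d\mu_i/d\lambda$ for the Radon--Nikodym derivatives with $i \in \{1,2\}$, the BC becomes
\begin{equation*}
B(\mu_1,\mu_2) = \int_\Omega \sqrt{p_1 p_2}\, d\lambda.
\end{equation*}
This value is independent of the particular dominating measure chosen (via the chain rule for Radon--Nikodym derivatives), so committing to $\lambda = \mu_1 + \mu_2$ throughout loses no generality.

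For the direction that orthogonality implies $B = 0$, I would suppose there is a measurable set $F$ with $\mu_1(F) = 1$ and $\mu_2(\overline{F}) = 1$, equivalently $\mu_1(\overline{F}) = 0$ and $\mu_2(F) = 0$. Then $\int_{\overline{F}} p_1 \, d\lambda = 0$ forces $p_1 = 0$ $\lambda$-almost everywhere on $\overline{F}$, and likewise $\mu_2(F)=0$ forces $p_2 = 0$ $\lambda$-almost everywhere on $F$. Hence the product $p_1 p_2$ vanishes $\lambda$-almost everywhere on $\Omega = F \cup \overline{F}$, so $\sqrt{p_1 p_2} = 0$ $\lambda$-almost everywhere and $B(\mu_1,\mu_2) = 0$.

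For the converse, assume $B(\mu_1,\mu_2) = 0$. Since the integrand $\sqrt{p_1 p_2}$ is nonnegative, a vanishing integral forces $\sqrt{p_1 p_2} = 0$, and therefore $p_1 p_2 = 0$, $\lambda$-almost everywhere. I would then take the candidate separating set to be $F = \{\omega \in \Omega : p_1(\omega) > 0\}$, which is measurable. On $F$ we have $p_1 > 0$, so the relation $p_1 p_2 = 0$ gives $p_2 = 0$ $\lambda$-almost everywhere on $F$; hence $\mu_2(F) = \int_F p_2 \, d\lambda = 0$, and so $\mu_2(\overline{F}) = 1$. On the other hand $\overline{F} = \{p_1 = 0\}$, so $\mu_1(\overline{F}) = \int_{\overline{F}} p_1 \, d\lambda = 0$, giving $\mu_1(F) = 1$. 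Thus $F$ witnesses the orthogonality of $\mu_1$ and $\mu_2$.

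The only genuinely delicate points are measure-theoretic bookkeeping: confirming the BC is well-defined independent of the dominating measure (or simply committing to $\lambda = \mu_1 + \mu_2$ to sidestep the issue), and handling the almost-everywhere statements carefully so that the construction of $F$ yields the exact equalities $\mu_1(F) = 1$ and $\mu_2(F) = 0$ rather than approximate ones. I expect the main obstacle to be purely expository---making airtight the passage from ``$p_1 p_2 = 0$ almost everywhere'' to a single clean separating set $F$---since the underlying mathematics is the standard Lebesgue-decomposition argument.
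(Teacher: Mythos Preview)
Your argument is correct and is the standard Radon--Nikodym approach to this classical fact. The paper does not supply its own proof of this lemma; it is quoted directly from Kakutani \cite[Section 4]{SK:48} and invoked as a black box in the proof of Theorem~\ref{thm:main}, so there is nothing to compare against beyond noting that your write-up would serve as a self-contained justification where the paper simply cites the literature.
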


If we can show that the probability measures $\mathcal{P}_{1}^{\bm{\pi}}$ and $\mathcal{P}_{2}^{\bm{\pi}}$ are orthogonal under the policy $\bm{\pi}$, then by Lemma~\ref{lema:bcorthogonal}, we know that the BC between $\mathcal{P}_{1}^{\bm{\pi}}$ and $\mathcal{P}_{2}^{\bm{\pi}}$ is zero. Further by Lemma~\ref{lema:APDgeneral}, we conclude that APD is achieved under the policy $\bm{\pi}$. The next two lemmas reveal some relationships among the probability measures $\mathcal{P}_{1}^{\bm{\pi}}$, $\mathcal{P}_{2}^{\bm{\pi}}$ and $\mathcal{P}_{\I}^{\bm{\pi}}$ for any given policy $\bm{\pi}$.

\begin{lema}[Infinitely often visited informative state-action pairs]\label{lemma:probmeasures}
Given a binary MMDP $\mathcal{M}=\{M_1,M_2\}$, any of its informative MDP $M^{\I}$ and a policy $\bm{\pi}$, let $\mathcal{H}_{\D}=\{h\in \mathcal{H}^{\I}\,|\, \inft(h)\cap{\ISA^{\p}}\neq\emptyset\}$, then the following statements hold
\begin{enumerate}[label=(\roman*)]
\item\label{itm:probmeasure} $\mathcal{P}_{\I}^{\bm{\pi}}(\mathcal{H}_{\D})=1$ if and only if $\mathcal{P}_{1}^{\bm{\pi}}(\mathcal{H}_{\D})=1$;
\item $\mathcal{P}_{\I}^{\bm{\pi}}(\mathcal{H}_{\D})=1$ if and only if $\mathcal{P}_{2}^{\bm{\pi}}(\mathcal{H}_{\D})=1$.
\end{enumerate}
\end{lema}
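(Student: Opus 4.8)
The plan is to prove statement (i); statement (ii) then follows by interchanging the roles of $M_1^{\p}$ and $M_2^{\p}$ (equivalently, replacing $\gamma$ by $1-\gamma$), so the two halves are symmetric. It is convenient to pass to complements and show instead that $\mathcal{P}_{\I}^{\bm{\pi}}(\overline{\mathcal{H}_{\D}})=0$ if and only if $\mathcal{P}_1^{\bm{\pi}}(\overline{\mathcal{H}_{\D}})=0$, where $\overline{\mathcal{H}_{\D}}=\{h\in\mathcal{H}^{\I}:\inft(h)\cap\ISA^{\p}=\emptyset\}$. The structural fact driving everything is that any pair $(s,a)$ that is neither informative nor revealing satisfies $\delta_1(\cdot\,|\,s,a)=\delta_2(\cdot\,|\,s,a)$, so after preprocessing $\delta_1^{\p}(\cdot\,|\,s,a)=\delta_2^{\p}(\cdot\,|\,s,a)=\delta^{\I}(\cdot\,|\,s,a)$; call such pairs \emph{silent}. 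The terminal self-loops $(\bot_i,a^{\bot_i})$ lie in $\ISA^{\p}$, so any history reaching $\bot_1$ or $\bot_2$ loops there and belongs to $\mathcal{H}_{\D}$; and after preprocessing every revealing pair sends the process to a terminal at the next step. Hence each $h\in\overline{\mathcal{H}_{\D}}$ never reaches a terminal, never uses a revealing pair, and uses informative pairs only finitely often, so past some finite step it traverses only silent pairs, where all three kernels coincide.

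To exploit this I would introduce the likelihood-ratio process $L_t=d\mathcal{P}_1^{\bm{\pi}}|_{\mathcal{F}_t}/d\mathcal{P}_{\I}^{\bm{\pi}}|_{\mathcal{F}_t}$, where $\mathcal{F}_t$ is the $\sigma$-algebra generated by the first $t$ steps. Because $\delta^{\I}=\gamma\delta_1^{\p}+(1-\gamma)\delta_2^{\p}$ vanishes exactly where both $\delta_i^{\p}$ vanish, we have $\mathcal{P}_1^{\bm{\pi}}|_{\mathcal{F}_t}\ll\mathcal{P}_{\I}^{\bm{\pi}}|_{\mathcal{F}_t}$, so $L_t$ is well defined, and on a cylinder it equals $\prod_{\tau=0}^{t-1}\delta_1^{\p}(s_{\tau+1}\,|\,s_\tau,a_\tau)/\delta^{\I}(s_{\tau+1}\,|\,s_\tau,a_\tau)$ (the policy factors cancel). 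Each factor equals $1$ on a silent transition, while on an informative transition into the common support it equals $\delta_1^{\p}/(\gamma\delta_1^{\p}+(1-\gamma)\delta_2^{\p})$, which over the finitely many informative pairs and targets takes finitely many values, all lying in some $[m,M]$ with $0<m\le M<\infty$. I would then stratify $\overline{\mathcal{H}_{\D}}=\bigcup_{T\ge 1}A_T$, where $A_T$ collects those histories whose finitely many informative transitions all occur before step $T$; this is an increasing union.

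On $A_T$ every factor from step $T$ on equals $1$, so $L_t$ is constant and equal to $L_T$ for all $t\ge T$, and $L_T$ is a product of at most $T$ factors in $[m,M]$ (the rest being $1$), hence bounded in $[c_T,C_T]$ with $c_T=\min(1,m)^T>0$ and $C_T=\max(1,M)^T<\infty$. Since $L_t$ is a nonnegative $\mathcal{P}_{\I}^{\bm{\pi}}$-martingale, it converges $\mathcal{P}_{\I}^{\bm{\pi}}$-almost surely to some $L_\infty$, and the associated Lebesgue decomposition gives $\mathcal{P}_1^{\bm{\pi}}(A)=\int_A L_\infty\,d\mathcal{P}_{\I}^{\bm{\pi}}$ for every measurable $A\subseteq\{L_\infty<\infty\}$, the singular part of $\mathcal{P}_1^{\bm{\pi}}$ being concentrated on $\{L_\infty=\infty\}$. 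As $L_\infty=L_T\in[c_T,C_T]$ on $A_T$, this yields $c_T\,\mathcal{P}_{\I}^{\bm{\pi}}(A)\le\mathcal{P}_1^{\bm{\pi}}(A)\le C_T\,\mathcal{P}_{\I}^{\bm{\pi}}(A)$ for all measurable $A\subseteq A_T$, so in particular $\mathcal{P}_{\I}^{\bm{\pi}}(A_T)=0$ if and only if $\mathcal{P}_1^{\bm{\pi}}(A_T)=0$. Taking the countable union over $T$ gives $\mathcal{P}_{\I}^{\bm{\pi}}(\overline{\mathcal{H}_{\D}})=0\Leftrightarrow\mathcal{P}_1^{\bm{\pi}}(\overline{\mathcal{H}_{\D}})=0$, which is statement (i).

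The main obstacle is the measure-theoretic passage from the cylinder-level comparison to a statement about the genuinely infinite-horizon event $\overline{\mathcal{H}_{\D}}$: the naive domination $\mathcal{P}_{\I}^{\bm{\pi}}(\mathrm{cyl}(h_t))\ge\gamma^{t}\mathcal{P}_1^{\bm{\pi}}(\mathrm{cyl}(h_t))$ degrades with $t$ and cannot by itself control a tail event, and two measures agreeing on the supports of all cylinders can still be mutually singular. The stratification is exactly what tames this, since on $A_T$ the number of nontrivial likelihood factors is capped at $T$ independently of the history length, so the density stays bounded away from $0$ and $\infty$ and, crucially, the singular part of $\mathcal{P}_1^{\bm{\pi}}$ cannot meet $A_T$. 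Verifying the martingale density decomposition and the uniform factor bounds $[m,M]$, together with the claim that histories in $\overline{\mathcal{H}_{\D}}$ avoid terminals and revealing pairs, are the technical points I would need to be careful with.
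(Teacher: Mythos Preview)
Your argument is correct and takes a genuinely different route from the paper. The paper proves the contrapositive by an elementary conditioning argument: it writes $\overline{\mathcal{H}_{\D}}=\bigcup_{t}\bigcap_{\tau\ge t}\overline{E_\tau}$ with $E_\tau=\{(h(\tau),h[\tau])\in\ISA^{\p}\}$, uses continuity of measure to pick $\tilde t$ and a finite prefix $\tilde\ell_{\tilde t}$ with $\mathcal{P}_{\I}^{\bm\pi}(\tilde\ell_{\tilde t})\,\mathcal{P}_{\I}^{\bm\pi}\bigl(\bigcap_{\tau\ge\tilde t}\overline{E_\tau}\mid\tilde\ell_{\tilde t}\bigr)>0$, observes that such a prefix must avoid $\bot_1,\bot_2$ so that $\mathcal{P}_{1}^{\bm\pi}(\tilde\ell_{\tilde t})>0$ as well, and finally notes that the conditional tail probability is \emph{literally the same number} under $\mathcal{P}_{1}^{\bm\pi}$ and $\mathcal{P}_{\I}^{\bm\pi}$ because from step $\tilde t$ onward only silent transitions occur. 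No martingales, no Lebesgue decomposition---just total probability over finitely many prefixes. Your likelihood-ratio route is more structural: it actually shows that on each stratum $A_T$ the restrictions of $\mathcal{P}_1^{\bm\pi}$ and $\mathcal{P}_{\I}^{\bm\pi}$ are mutually absolutely continuous with density pinned in $[c_T,C_T]$, which is stronger than the null-set equivalence the lemma requires and would yield quantitative comparison if one ever wanted it. The cost is invoking the filtration-level decomposition $\mathcal{P}_1^{\bm\pi}(A)=\int_A L_\infty\,d\mathcal{P}_{\I}^{\bm\pi}+\mathcal{P}_1^{\bm\pi}(A\cap\{L_\infty=\infty\})$ (standard, but external machinery) and checking the factor bounds; the paper's argument is entirely self-contained.
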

\begin{proof}
By symmetry, we only need to prove~\ref{itm:probmeasure}. Instead of proving~\ref{itm:probmeasure} directly, we prove the equivalent statement that $\mathcal{P}_{\I}^{\bm{\pi}}(\overline{\mathcal{H}_{\D}})>0$ if and only if $\mathcal{P}_{1}^{\bm{\pi}}(\overline{\mathcal{H}_{\D}})>0$.

{$\mathcal{P}_{\I}^{\bm{\pi}}(\overline{\mathcal{H}_{\D}})>0\implies \mathcal{P}_{1}^{\bm{\pi}}(\overline{\mathcal{H}_{\D}})>0$}: We first write $\overline{\mathcal{H}_{\D}}$ out more explicitly. For $t\in\mathbb{N}_{\geq0}$, let $E_t=\{h\in \mathcal{H}^{\I}\,|\,(h(t),h[t])\in\ISA^{\p}\}$, then we have $\overline{\mathcal{H}_{\D}}=\cup_{t=0}^\infty\cap_{\tau=t}^\infty \overline{E_\tau}$. Since $\cap_{\tau=t}^\infty \overline{E_\tau}$ is an increasing sequence of sets indexed by $t$, i.e., $\cap_{\tau=t}^\infty \overline{E_\tau}\subset\cap_{\tau=t+1}^\infty \overline{E_\tau}$, by the continuity of probability measures, we have that $\mathcal{P}_{\I}^{\bm{\pi}}(\overline{\mathcal{H}_{\D}})=\lim_{t\rightarrow\infty}\mathcal{P}_{\I}^{\bm{\pi}}(\cap_{\tau=t}^\infty \overline{E_\tau})>0$. Therefore, there exists $\tilde{t}\in\mathbb{N}_{\geq0}$, such that $\mathcal{P}_{\I}^{\bm{\pi}}(\cap_{\tau=\tilde{t}}^\infty \overline{E_\tau})>0$. 

In the following, we show that $\mathcal{P}_{1}^{\bm{\pi}}(\cap_{\tau=\tilde{t}}^\infty \overline{E_\tau})>0$, which implies that  $\mathcal{P}_{1}^{\bm{\pi}}(\overline{\mathcal{H}_{\D}})=\lim_{t\rightarrow\infty}\mathcal{P}_{1}^{\bm{\pi}}(\cap_{\tau=t}^\infty \overline{E_\tau})>0$.

When $\tilde{t}=0$, we have that $\mathcal{P}_{\I}^{\bm{\pi}}(\cap_{\tau=0}^\infty \overline{E_\tau})>0$, i.e., the set of histories that do not contain the informative state action pairs has positive measure under $\mathcal{P}_{\I}^{\bm{\pi}}$. Since the policy as well as the transition probabilities are the same for those histories in $M_1^{\p}$ and $M^{\I}$, we conclude that $\mathcal{P}_{1}^{\bm{\pi}}(\cap_{\tau=0}^\infty \overline{E_\tau})>0$.

Suppose $\tilde{t}\geq1$. 
Let 
\begin{equation}\label{eq:setofprefices}
	{\mathcal{L}}_{\tilde{t}}=\{(s_0,\cdots, a_{\tilde{t}-1})\,|\,s_\tau\in\mathcal{S}^{\p},a_{\tau}\in\mathcal{A}_{s_\tau}^{\p}\textup{ for }\tau\in\mathbb{N}_{0}^{\tilde{t}-1}\}.
\end{equation}
Note that ${\mathcal{L}}_{\tilde{t}}$ is a finite set. Then, by the total probability formula, we have
\begin{equation*}
	\mathcal{P}_{\I}^{\bm{\pi}}(\cap_{\tau=\tilde{t}}^\infty \overline{E_\tau})=
	\sum_{{\ell}_{\tilde{t}}\in{\mathcal{L}}_{\tilde{t}}}\mathcal{P}_{\I}^{\bm{\pi}}(\cap_{\tau=\tilde{t}}^\infty \overline{E_\tau}\,|\,\ell_{\tilde{t}})\mathcal{P}_{\I}^{\bm{\pi}}(\ell_{\tilde{t}})>0.
\end{equation*}
Thus, there must exist some $\tilde{\ell}_{\tilde{t}}\in{\mathcal{L}}_{\tilde{t}}$ such that
\begin{equation}\label{eq:singlepositive}
	\mathcal{P}_{\I}^{\bm{\pi}}(\tilde{\ell}_{\tilde{t}})\mathcal{P}_{\I}^{\bm{\pi}}(\cap_{\tau=\tilde{t}}^\infty \overline{E_\tau}\,|\,\tilde{\ell}_{\tilde{t}})>0.
\end{equation}
For~\eqref{eq:singlepositive} to hold true, $\tilde{\ell}_{\tilde{t}}$ must not contain $\bot_{i}$ for $i\in\{1,2\}$ since otherwise $\mathcal{P}_{\I}^{\bm{\pi}}(\cap_{\tau=\tilde{t}}^\infty \overline{E_\tau}\,|\,\tilde{\ell}_{\tilde{t}})=0$. Therefore, from the construction of the informative MDP, we also have $\mathcal{P}_{1}^{\bm{\pi}}(\tilde{\ell}_{\tilde{t}})>0$. On the other hand, note that all histories in $\cap_{\tau=\tilde{t}}^\infty \overline{E_\tau}$ with the first $2(\tilde{t}-1)$ elements coinciding with $\tilde{\ell}_{\tilde{t}}$ do not contain any informative state-action pairs after step $\tilde{t}$, and the policy as well as the transition probabilities are the same for those histories in $M_1^{\p}$ and $M^{\I}$. Thus, we have $\mathcal{P}_{1}^{\bm{\pi}}(\cap_{\tau=\tilde{t}}^\infty \overline{E_\tau}\,|\,\tilde{\ell}_{\tilde{t}})=\mathcal{P}_{\I}^{\bm{\pi}}(\cap_{\tau=\tilde{t}}^\infty \overline{E_\tau}\,|\,\tilde{\ell}_{\tilde{t}})>0$. In summary, we have $\mathcal{P}_{1}^{\bm{\pi}}(\cap_{\tau=\tilde{t}}^\infty \overline{E_\tau})>0$, which implies $\mathcal{P}_{1}^{\bm{\pi}}(\overline{\mathcal{H}_{\D}})>0$.

The converse {$\mathcal{P}_{1}^{\bm{\pi}}(\overline{\mathcal{H}_{\D}})>0\implies \mathcal{P}_{\I}^{\bm{\pi}}(\overline{\mathcal{H}_{\D}})>0$} can be shown in an almost identical manner and is omitted here in the interest of brevity.


%
%
\end{proof}

\begin{lema}[Finitely often visited informative state-action pairs]\label{lemma:fnintelyoften}
	Given a binary MMDP $\mathcal{M}=\{M_1,M_2\}$ and a policy $\bm{\pi}$, for $i\in\{1,2\}$, we have $\mathcal{P}_{i}^{\bm{\pi}}(\{h\in \mathcal{H}^{\I}\,|\, \inft(h)\cap{\ISA^{\p}_{\textup{tran},i}}\neq\emptyset\})=0$.
\end{lema}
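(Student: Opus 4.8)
The plan is to fix $i=1$ throughout (the case $i=2$ follows verbatim after interchanging the roles of $M_1^{\p},\bot_1$ with $M_2^{\p},\bot_2$), and to establish the stronger statement that \emph{each individual} pair in $\ISA^{\p}_{\textup{tran},1}$ is visited infinitely often with probability zero under $\mathcal{P}_1^{\bm{\pi}}$. Writing $A_{s,a}=\{h\in\mathcal{H}^{\I}\,|\,(s,a)\in\inft(h)\}$ and recalling that $\ISA^{\p}_{\textup{tran},1}=\ISA^{\p}_1\cup\ISA^{\p}_3$ is a finite set, a union bound then gives
\begin{equation*}
\mathcal{P}_{1}^{\bm{\pi}}(\{h\,|\,\inft(h)\cap\ISA^{\p}_{\textup{tran},1}\neq\emptyset\})\leq\sum_{(s,a)\in\ISA^{\p}_{\textup{tran},1}}\mathcal{P}_{1}^{\bm{\pi}}(A_{s,a})=0.
\end{equation*}

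The key structural fact I would exploit is inherited from the preprocessing construction: every pair $(s,a)\in\ISA^{\p}_{\textup{tran},1}$ satisfies $p_{s,a}:=\delta_1^{\p}(\bot_1\,|\,s,a)>0$, and the terminal state $\bot_1$ is absorbing, since its only action $a^{\bot_1}$ self-loops. Hence once the process governed by $\delta_1^{\p}$ reaches $\bot_1$ it never leaves, so after absorption no pair with first coordinate $s\neq\bot_1$ can be visited again. In other words, each visit to $(s,a)$ independently carries probability at least $p_{s,a}$ of terminating \emph{all} future visits to that pair.

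To make this rigorous for a history-dependent policy $\bm{\pi}$, I would introduce the successive hitting times $\tau_1<\tau_2<\cdots$ of the pair $(s,a)$ (the times at which the state equals $s$ and $\bm{\pi}$ selects $a$), the natural filtration $\mathcal{F}_{\tau_k}$ of the process stopped at $\tau_k$, and the nested events $V_k=\{\tau_k<\infty\}$ that $(s,a)$ is visited at least $k$ times, so that $A_{s,a}=\bigcap_{k\geq1}V_k$ with $V_{k+1}\subseteq V_k$. Conditioned on $V_k$ and on $\mathcal{F}_{\tau_k}$, the Markov property of the one-step kernel $\delta_1^{\p}$ says the next state is $\bot_1$ with probability exactly $p_{s,a}$, and on that event $V_{k+1}$ fails; the tower property then yields $\mathcal{P}_1^{\bm{\pi}}(V_{k+1})\leq(1-p_{s,a})\,\mathcal{P}_1^{\bm{\pi}}(V_k)$, whence $\mathcal{P}_1^{\bm{\pi}}(V_k)\leq(1-p_{s,a})^{k-1}$. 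Continuity of the measure from above along the decreasing sequence $V_k$ gives $\mathcal{P}_1^{\bm{\pi}}(A_{s,a})=\lim_{k\to\infty}\mathcal{P}_1^{\bm{\pi}}(V_k)=0$, as desired.

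The main obstacle is the bookkeeping forced by the history-dependence of $\bm{\pi}$: because the process is not a time-homogeneous Markov chain, I cannot appeal to a stationary recurrence or a fixed hitting probability. The remedy is to condition on $\mathcal{F}_{\tau_k}$ at the \emph{random} time of the $k$-th visit and to use only the one-step Markov property of $\delta_1^{\p}$, which is unaffected by the policy once the current state-action pair is fixed. One must also check measurability of the $\tau_k$ and of $A_{s,a}$ in $\mathcal{Q}^{\I}$; this is routine, since $A_{s,a}$ is a tail-type event that decomposes into cylinder sets exactly as $\overline{\mathcal{H}_{\D}}$ was handled in the proof of Lemma~\ref{lemma:probmeasures}.
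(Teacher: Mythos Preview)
Your argument is correct and genuinely different from the paper's. The paper proceeds structurally: it shows that no pair $(s,a)\in\ISA^{\p}_{\textup{tran},i}$ can belong to any end component of $M_i^{\p}$ (because $\delta_i^{\p}(\bot_i\,|\,s,a)>0$ forces $\bot_i$ into the component, contradicting strong connectivity), and then invokes de Alfaro's theorem that $\inft(h)$ is almost surely an end component. Your route is a direct Borel--Cantelli--type estimate: each visit to $(s,a)$ carries probability $p_{s,a}>0$ of absorbing at $\bot_1$, so the chance of $k$ visits decays geometrically and the intersection has measure zero. Your proof is more elementary and fully self-contained, avoiding the external MDP result; the paper's version is shorter and ties cleanly into the end-component machinery already used in Theorem~\ref{thm:alg}. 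One small point worth making explicit in your write-up is that $s\neq\bot_1$ for $(s,a)\in\ISA^{\p}_{\textup{tran},1}$, which holds because the sets $\ISA^{\p}_{\textup{a}},\ISA^{\p}_0,\ldots,\ISA^{\p}_3$ are declared to be a \emph{partition} of $\ISA^{\p}$; without this, the pair $(\bot_1,a^{\bot_1})$ would formally satisfy the defining inequality for $\ISA^{\p}_1$ and your absorption argument would not apply to it.
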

\begin{proof}	
We first show that any state-action pair $(s,a)\in{\ISA^{\p}_{\textup{tran},i}}$ does not belong to any end component of $M_i^{\p}$ by contradiction. Let $C\in\mathcal{C}(M_i^{\p})$ be an end component and suppose $(s,a)\in{\ISA^{\p}_{\textup{tran},i}}$ and $(s,a)\in C$. Since $\delta_i^{\p}(\bot_i\,|\,s,a)>0$ by the definition of ${\ISA^{\p}_{\textup{tran},i}}$, we must have $\bot_i\in C$ by Definition~\ref{def:MEC}\ref{itm:nextstates}. However, this violates Definition~\ref{def:MEC}\ref{itm:strongconnect} for $C$ since $s$ is not reachable from $\bot_i$ in $C$. Therefore, $C$ cannot be an end component, which is a contradiction.

By~\cite[Theorem 3.2]{LdeA:97}, we know that the set of infinitely often visited state-action pairs constitute an end component almost surely, i.e., 
\begin{equation}\label{eq:iovisitEC}
\mathcal{P}_{i}^{\bm{\pi}}(\{h\in\mathcal{H}^{\I}\,|\,\inft(h)\textup{ is an end component}\})=1.
\end{equation}
Since any state-action pair $(s,a)\in{\ISA^{\p}_{\textup{tran},i}}$ cannot be in any end component of $M_i^{\p}$, we also have 
\begin{multline}\label{eq:ECandtran}
\{h\in\mathcal{H}^{\I}\,|\,\inft(h)\textup{ is an end component}\}\cap\\\{h\in \mathcal{H}^{\I}\,|\, \inft(h)\cap{\ISA^{\p}_{\textup{tran},i}}\neq\emptyset\}=\emptyset.
\end{multline}
Combining~\eqref{eq:iovisitEC} and~\eqref{eq:ECandtran}, we conclude that $\mathcal{P}_{i}^{\bm{\pi}}(\{h\in \mathcal{H}^{\I}\,|\, \inft(h)\cap{\ISA^{\p}_{\textup{tran},i}}\neq\emptyset\})=0$.

\end{proof}

Now we are ready to prove Theorem~\ref{thm:main}.
\begin{proof}[Proof of Theorem~\ref{thm:main}]
Let $\mathcal{H}_{\D}=\{h\in \mathcal{H}^{\I}\,|\, \inft(h)\cap{\ISA^{\p}}\neq\emptyset\}\subset\mathcal{H}^{\I}$.

\subsubsection{If \texorpdfstring{$\mathcal{P}_{\I}^{\bm{\pi}}(\mathcal{H}_{\D})=1$}{TEXT}, then APD is achieved under the policy \texorpdfstring{$\bm{\pi}$}{TEXT}} Since $\mathcal{P}_{\I}^{\bm{\pi}}(\mathcal{H}_{\D})=1$, by Lemma~\ref{lemma:probmeasures}, we also have $\mathcal{P}_{i}^{\bm{\pi}}(\mathcal{H}_{\D})=1$ for $i\in\{1,2\}$. In the following, we show that the probability measures $\mathcal{P}_{1}^{\bm{\pi}}$ and $\mathcal{P}_{2}^{\bm{\pi}}$ are orthogonal by constructing a measurable set $F\in\mathcal{Q}$ such that $\mathcal{P}_{1}^{\bm{\pi}}(F)=1$ and $\mathcal{P}_{2}^{\bm{\pi}}(\overline{F})=1$. Then, the conclusion follows from Lemma~\ref{lema:bcorthogonal} and Lemma~\ref{lema:APDgeneral}. 

Let $\mathcal{H}_{\D}=F_1\cup F_2\cup G$ where for $i\in\{1,2\}$,
\begin{equation*}
	F_i=\cup_{t=0}^\infty\cap_{\tau=t}^\infty\{h\in \mathcal{H}_{\D}\,|\, (h(\tau),h[\tau])=(\bot_i,a^{\bot_i})\},
\end{equation*}
and 
\begin{equation*}
G=\mathcal{H}_{\D}\setminus (F_1\cup F_2)=\{h\in \mathcal{H}_{\D}\,|\, \inft(h)\cap({\ISA^{\p}}\setminus	\ISA^{\p}_{\textup{a}})\neq\emptyset\}. 
\end{equation*}
Note that the sets $F_1$, $F_2$ and $G$ are disjoint and measurable. Moreover, since for $i\in\{1,2\}$, $\mathcal{P}_{i}^{\bm{\pi}}(\mathcal{H}_{\D})=1$, and $\bot_i$ is not reachable in $M_{3-i}^{\p}$, i.e., $\mathcal{P}_{i}^{\bm{\pi}}(F_{3-i})=0$, we have that $\mathcal{P}_{i}^{\bm{\pi}}(F_i\cup G)=1$. 
If $G=\emptyset$, then $\mathcal{H}_{\D}=F_1\cup F_2$. Let $F=F_1$ and $F_2\subset\overline{F}=\mathcal{H}\setminus F_1$, we have $\mathcal{P}_{1}^{\bm{\pi}}(F)=\mathcal{P}_{1}^{\bm{\pi}}(F_1)=1$ and $1\geq\mathcal{P}_{2}^{\bm{\pi}}(\overline{F})\geq\mathcal{P}_{2}^{\bm{\pi}}(F_2)=1$. 

Suppose $G\neq\emptyset$. We further partition $G$ as
\begin{equation*}
G =G_0\cup G_1\cup G_2,
\end{equation*}
where 
\begin{align*}
G_{0}&=\{h\in G\,|\, \inft(h)\cap\ISA^{\p}_{0}\neq\emptyset, \inft(h)\cap\ISA^{\p}_{\textup{tran}}=\emptyset\},\\
G_1&=\{h\in G\,|\,\inft(h)\cap\ISA^{\p}_{2}\neq\emptyset, \inft(h)\cap\ISA^{\p}_{\textup{tran},1}=\emptyset\},\\
G_2&=G\setminus(G_0\cup G_1).
\end{align*}
Note that $G_0$, $G_1$ and $G_2$ are disjoint, and $G_2\subset\{h\in G\,|\,\inft(h)\cap\ISA^{\p}_{\textup{tran},1}\neq\emptyset\}$. 
If $G_0=\emptyset$, then $G=G_1\cup G_2$ and $\mathcal{H}_{\D}=F_1\cup F_2\cup G_1 \cup G_2$. Since $\mathcal{P}_{1}^{\bm{\pi}}(G_2)=0$ and $\mathcal{P}_{2}^{\bm{\pi}}(G_1)=0$ by Lemma~\ref{lemma:fnintelyoften}, for $F=F_1\cup G_1$ and $(F_2\cup G_2)\subset\overline{F}=\mathcal{H}\setminus F$, we have that $\mathcal{P}_{1}^{\bm{\pi}}(F)=\mathcal{P}_{1}^{\bm{\pi}}(F_1\cup G_1)=1$ and $1\geq\mathcal{P}_{2}^{\bm{\pi}}(\overline{F})\geq\mathcal{P}_{2}^{\bm{\pi}}(F_2\cup G_2)=1$.

Suppose $G_0\neq\emptyset$. We further write $G_0$ as
\begin{equation}\label{eq:decomposeF0}
	G_{0}=\cup_{\SA\in2^{\ISA^{\p}_{0}}}G_0^{\SA},
\end{equation}
where $2^{\ISA^{\p}_{0}}$ is the power set of ${\ISA^{\p}_{0}}$ and
\begin{equation*}
G_0^{\SA}=\{h\in G_0\,|\,\inft(h)\cap\ISA^{\p}_{0}=\SA\}.
\end{equation*}
Clearly, the sets $G_0^{\SA}$'s on the right-hand side of~\eqref{eq:decomposeF0} are disjoint. By the strong law of large numbers, we have that $\mathcal{P}_{1}^{\bm{\pi}}(G_0^{\SA})=\mathcal{P}_{1}^{\bm{\pi}}(\tilde{G_0^{\SA}})$ and $\mathcal{P}_{1}^{\bm{\pi}}(G_0^{\SA}\setminus\tilde{G_0^{\SA}})=0$,
where
\begin{multline*}
	\tilde{G_0^{\SA}}=\\\{h\in G_0\,|\,\inft(h)\cap\ISA^{\p}_{0}=\SA,\forall(s,a)\in\SA, \forall s'\in\mathcal{S}^{\p},\\
	\lim_{t\rightarrow\infty}\frac{\sum_{\tau=0}^t\mathbf{1}_{\{(h(\tau),h[\tau],h(\tau+1))=(s,a,s')\}}(h)}{t+1}=\delta_1^{\p}({s'\,|\,s,a})\}.
\end{multline*}
At the same time, since $\delta_1^{\p}(\cdot\,|\,s,a)\neq\delta_2^{\p}(\cdot\,|\,s,a)$ for $(s,a)\in\ISA^{\p}_{0}$, we also have that $	\mathcal{P}_{2}^{\bm{\pi}}(\tilde{G_0^{\SA}})=0$.

Therefore, we have that
\begin{equation*}
\mathcal{H}_{\D}=F_1\cup F_2\cup G_1 \cup G_2\cup(\cup_{\SA\in2^{\ISA^{\p}_{0}}}{G_0^{\SA}})
\end{equation*}
Let $F=F_1\cup G_1\cup(\cup_{\SA\in2^{\ISA^{\p}_{0}}}\tilde{G_0^{\SA}})$, we have that $\mathcal{P}_{1}^{\bm{\pi}}(F)=\mathcal{P}_{1}^{\bm{\pi}}(F_1\cup G_1\cup(\cup_{\SA\in2^{\ISA^{\p}_{0}}}\tilde{G_0^{\SA}}))=1$ and $1\geq\mathcal{P}_{2}^{\bm{\pi}}(\overline{F})\geq\mathcal{P}_{2}^{\bm{\pi}}(F_2\cup G_2\cup(\cup_{\SA\in2^{\ISA^{\p}_{0}}}(G_0^{\SA}\setminus\tilde{G_0^{\SA}})))=1$.

\subsubsection{If \texorpdfstring{$\mathcal{P}_{\I}^{\bm{\pi}}(\mathcal{H}_{\D})<1$}{TEXT}, then APD is not achieved under the policy \texorpdfstring{$\bm{\pi}$}{TEXT}} In this case, we have $\mathcal{P}_{\I}^{\bm{\pi}}(\overline{\mathcal{H}_{\D}})>0$. Let $E_t=\{h\in \mathcal{H}^{\I}\,|\,(h(t),h[t])\in\ISA^{\p}\}$, then by Lemma~\ref{lemma:probmeasures} and the proof therein, we know that there exists $\tilde{t}\in\mathbb{N}_{\geq0}$ such that $\mathcal{P}_{1}^{\bm{\pi}}(\cap_{\tau=\tilde{t}}^\infty \overline{E_\tau})>0$ and $\mathcal{P}_{2}^{\bm{\pi}}(\cap_{\tau=\tilde{t}}^\infty \overline{E_\tau})>0$. Moreover, there exists an $\tilde{\ell}_{\tilde{t}}\in\mathcal{L}_{\tilde{t}}$, where $\mathcal{L}_{\tilde{t}}$ is defined in~\eqref{eq:setofprefices}, such that
\begin{equation*}
	\mathcal{P}_{1}^{\bm{\pi}}(\tilde{\ell}_{\tilde{t}})\mathcal{P}_{1}^{\bm{\pi}}(\cap_{\tau=\tilde{t}}^\infty \overline{E_\tau}\,|\,\tilde{\ell}_{\tilde{t}})>0.
\end{equation*}
Since $\tilde{\ell}_{\tilde{t}}$ must not contain $\bot_1$ (otherwise we have $\mathcal{P}_{1}^{\bm{\pi}}(\cap_{\tau=\tilde{t}}^\infty \overline{E_\tau}\,|\,\tilde{\ell}_{\tilde{t}})=0$), we also have $	\mathcal{P}_{2}^{\bm{\pi}}(\tilde{\ell}_{\tilde{t}})>0$. Moreover, since all histories in $\cap_{\tau=\tilde{t}}^\infty \overline{E_\tau}$ with the first $2(\tilde{t}-1)$ elements being $\tilde{\ell}_{\tilde{t}}$ do not contain any informative state-action pairs after step $\tilde{t}$, and the policy as well as the transition probabilities are the same for those histories in $M_1^{\p}$ and $M_{2}^{\p}$, we also have $\mathcal{P}_{2}^{\bm{\pi}}(\cap_{\tau=\tilde{t}}^\infty \overline{E_\tau}\,|\,\tilde{\ell}_{\tilde{t}})>0$. Note that $\cap_{\tau=\tilde{t}}^t\overline{E_{\tau}}$ is a decreasing sequence of events as $t$ increases, thus we must have $\mathcal{P}_{i}^{\bm{\pi}}(\cap_{\tau=\tilde{t}}^t \overline{E_\tau}\,|\,\tilde{\ell}_{\tilde{t}})>0$ for all $t\geq\tilde{t}$ and $i\in\{1,2\}$. For $t\geq\tilde{t}$, let
 \begin{align*}
	{\mathcal{L}}_{t}=\{(s_0,\cdots,s_t, a_{t})\,|\,&(s_0,\cdots,a_{\tilde{t}-1})=\tilde{\ell}_{\tilde{t}}, \textup{ and } \\
	& (s_{\tau},a_{\tau})\notin\ISA^{\p}\textup{ for }\tau\in\mathbb{N}_{\tilde{t}}^{t}\}.
\end{align*}

We now note that the BC satisfies for all $t\geq\tilde{t}$, 
\begin{align}\label{eq:BCwithtime}
	\begin{split}
&\quad B(t+1,\bm{\pi})\\
&=\sum_{h_{{t+1}}\in\mathcal{H}^{\I}_{{t+1}}}\sqrt{\mathbb{P}_{1}^{\bm{\pi}}(h_{{t+1}})\mathbb{P}_{2}^{\bm{\pi}}(h_{{t+1}})}\\
&\geq\sum_{\ell_{{t}}\in\mathcal{L}_{{t}}}\sqrt{\mathbb{P}_{1}^{\bm{\pi}}(\ell_{{t}})\mathbb{P}_{2}^{\bm{\pi}}(\ell_{{t}})\sum_{s\in\mathcal{S}^{\p}}\delta_1^{\p}(s\,|\,s_{t},a_{t})\delta_2^{\p}(s\,|\,s_{t},a_{t})}\\
&=\sum_{\ell_{{t}}\in\mathcal{L}_{{t}}}\sqrt{\mathbb{P}_{1}^{\bm{\pi}}(\ell_{{t}})\mathbb{P}_{2}^{\bm{\pi}}(\ell_{{t}})}\\
&= \sqrt{\mathcal{P}_{1}^{\bm{\pi}}(\tilde{\ell}_{\tilde{t}})\mathcal{P}_{1}^{\bm{\pi}}(\cap_{\tau=\tilde{t}}^t \overline{E_\tau}\,|\,\tilde{\ell}_{\tilde{t}})\mathcal{P}_{2}^{\bm{\pi}}(\tilde{\ell}_{\tilde{t}})\mathcal{P}_{2}^{\bm{\pi}}(\cap_{\tau=\tilde{t}}^t \overline{E_\tau}\,|\,\tilde{\ell}_{\tilde{t}})}>0,
\end{split}
\end{align}
where the second and third equalities follow from the fact that for any $\ell_t\in\mathcal{L}_t$ and $\tau\geq\tilde{t}$, we have that $(s_{\tau},a_{\tau})\notin \ISA^{\p}$. Take the limit as $t$ goes to infinity on both sides of~\eqref{eq:BCwithtime}, we have that
\begin{multline*}
B(\bm{\pi})\geq\\\sqrt{	\mathcal{P}_{1}^{\bm{\pi}}(\tilde{\ell}_{\tilde{t}})\mathcal{P}_{1}^{\bm{\pi}}(\cap_{\tau=\tilde{t}}^\infty \overline{E_\tau}\,|\,\tilde{\ell}_{\tilde{t}})	\mathcal{P}_{2}^{\bm{\pi}}(\tilde{\ell}_{\tilde{t}})\mathcal{P}_{2}^{\bm{\pi}}(\cap_{\tau=\tilde{t}}^\infty \overline{E_\tau}\,|\,\tilde{\ell}_{\tilde{t}})}>0.
\end{multline*}
Thus, by Lemma~\ref{lema:APDgeneral}, APD is not achieved under the policy $\bm{\pi}$.

\end{proof}

\subsection{Proof of Theorem~\ref{thm:alg}}\label{sec:proofalg}

We will use the following lemma in our proof.

\begin{lema}[MECs, reachability probability, and infinitely often visited states]\label{lemma:reachabilityandio}
Given an MDP $M=(\mathcal{S},\mathcal{A},\delta,{s}_{\init})$, let $\mathcal{C}(M)$ be the set of MECs, $\SA^{\textup{target}}=\{(s,a)\,|\,s\in\mathcal{S},a\in\mathcal{A}_s\}$ be the set of target state-action pairs, and $\mathcal{C}^{\textup{target}}(M)=\{(\mathcal{X},\mathcal{U})\in\mathcal{C}(M)\,|\,\exists (s,a)\in\SA^{\textup{target}},s\in \mathcal{X},a\in \mathcal{U}\}$ be the set of target MECs. Then, 
\begin{multline*}
\mathbb{P}^{\max}_{s_{\init},M}(\textup{reach}(\mathcal{C}^{\textup{target}}(M)))=\\\max_{\bm{\pi}}\mathcal{P}^{\bm{\pi}}(\{h\in\mathcal{H}\,|\,\inft(h)\cap\SA^{\textup{target}}\neq\emptyset\}).
\end{multline*}
\end{lema}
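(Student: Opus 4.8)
The plan is to establish the claimed identity by proving two inequalities between the two maximized quantities, one for each direction, exploiting the structural fact that the set of infinitely often visited state-action pairs forms an end component almost surely \cite[Theorem 3.2]{LdeA:97} (the same result already invoked in the proof of Lemma~\ref{lemma:fnintelyoften}). Along the way the construction for the reverse inequality will also show that the supremum over $\bm{\pi}$ is in fact attained, so that writing $\max_{\bm{\pi}}$ is justified.

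For the inequality $\max_{\bm{\pi}}\mathcal{P}^{\bm{\pi}}(\{h\,|\,\inft(h)\cap\SA^{\textup{target}}\neq\emptyset\})\leq\mathbb{P}^{\max}_{s_{\init},M}(\textup{reach}(\mathcal{C}^{\textup{target}}(M)))$, I would fix an arbitrary policy $\bm{\pi}$ and argue a set inclusion up to a null set. By \cite[Theorem 3.2]{LdeA:97}, $\mathcal{P}^{\bm{\pi}}$-almost every history $h$ satisfies that $\inft(h)$ is an end component. On the event $\{\inft(h)\cap\SA^{\textup{target}}\neq\emptyset\}$ this end component contains some target pair $(s,a)\in\SA^{\textup{target}}$; since every end component is contained in a unique maximal end component and maximal end components have pairwise disjoint state sets, the MEC $C$ containing $\inft(h)$ then contains $(s,a)$ and is hence a target MEC, i.e.\ $C\in\mathcal{C}^{\textup{target}}(M)$. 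Because the states of $\inft(h)$ are visited (indeed infinitely often), $h$ reaches the state set of $C$, so $h\in\textup{reach}(\mathcal{C}^{\textup{target}}(M))$. Thus $\{h\,|\,\inft(h)\cap\SA^{\textup{target}}\neq\emptyset\}\subseteq\textup{reach}(\mathcal{C}^{\textup{target}}(M))$ modulo a $\mathcal{P}^{\bm{\pi}}$-null set, giving $\mathcal{P}^{\bm{\pi}}(\{h\,|\,\inft(h)\cap\SA^{\textup{target}}\neq\emptyset\})\leq\mathcal{P}^{\bm{\pi}}(\textup{reach}(\mathcal{C}^{\textup{target}}(M)))\leq\mathbb{P}^{\max}_{s_{\init},M}(\textup{reach}(\mathcal{C}^{\textup{target}}(M)))$; taking the supremum over $\bm{\pi}$ finishes this direction.

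For the reverse inequality I would construct a single history-dependent policy attaining the right-hand side. Let $\bm{\pi}^{\textup{r}}$ achieve $\mathbb{P}^{\max}_{s_{\init},M}(\textup{reach}(\mathcal{C}^{\textup{target}}(M)))$, and define a spliced policy that follows $\bm{\pi}^{\textup{r}}$ until the history first hits a state $s$ lying in the state set $\mathcal{X}$ of some target MEC $C=(\mathcal{X},\mathcal{U})$, and thereafter plays the uniform-randomization policy with $\bm{\pi}^{C}(a\,|\,s)=1/|\mathcal{U}_s|$ for $a\in\mathcal{U}_s$, exactly as in line~\ref{alg:insideMEC} of Algorithm~\ref{alg:APD}. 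By Definition~\ref{def:MEC}\ref{itm:nextstates} the process stays in $\mathcal{X}$ forever once it enters $C$, and by Definition~\ref{def:MEC}\ref{itm:strongconnect} the induced finite Markov chain on $\mathcal{X}$ is irreducible and hence positive recurrent; since every action in $\mathcal{U}_s$ is chosen with positive probability at each visit to $s$, every state-action pair of $\mathcal{U}$ — in particular at least one target pair — is visited infinitely often almost surely. Consequently, conditioned on reaching a target MEC (an event of probability $\mathbb{P}^{\max}_{s_{\init},M}(\textup{reach}(\mathcal{C}^{\textup{target}}(M)))$ under the spliced policy, inherited from $\bm{\pi}^{\textup{r}}$ up to the hitting time), we have $\inft(h)\cap\SA^{\textup{target}}\neq\emptyset$ almost surely, which yields the matching lower bound.

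The main obstacle I anticipate is the careful justification of the splicing step: one must verify that the combined policy reaches the target MECs with exactly the optimal probability and that the within-MEC uniform policy visits every pair of $\mathcal{U}$ infinitely often almost surely. The latter is the recurrence argument for the finite irreducible chain induced on $\mathcal{X}$, relying on the strong connectivity of Definition~\ref{def:MEC}\ref{itm:strongconnect} together with the closedness of Definition~\ref{def:MEC}\ref{itm:nextstates}; the disjointness of MEC state sets (which makes ``the MEC containing $\inft(h)$'' well defined) is a standard property of the MEC decomposition and should be stated explicitly to make the first direction rigorous.
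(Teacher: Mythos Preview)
Your proposal is correct, but the paper takes a much shorter route: its entire proof is the single sentence ``The result follows directly from \cite[Theorem 4.2]{LdeA:97}.'' Your two-inequality argument effectively re-derives the content of that cited theorem from the more primitive \cite[Theorem 3.2]{LdeA:97} (the end-component structure of $\inft(h)$, already used in Lemma~\ref{lemma:fnintelyoften}) together with an explicit spliced-policy construction. What you gain is a self-contained proof that does not rely on the reader locating the external result, and your construction for the reverse inequality is exactly the policy Algorithm~\ref{alg:APD} outputs, so your argument also directly justifies that part of the algorithm; what the paper gains is brevity by outsourcing the work to a known theorem.
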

\begin{proof}
The result follows directly from \cite[Theorem 4.2]{LdeA:97}.
\end{proof}

\begin{proof}[Proof of Theorem~\ref{thm:alg}]
Let $\mathcal{H}_{\D}=\{h\in \mathcal{H}^{\I}\,|\, \inft(h)\cap{\ISA^{\p}}\neq\emptyset\}\subset\mathcal{H}^{\I}$.

We first show that if Algorithm~\ref{alg:APD} reports no solution, then APD cannot be achieved.
In this case,  $\mathbb{P}_{s_{\init},M^{\I}}^{\textup{max}}(\textup{reach}(\mathcal{C}^{\I}(M^{\I})))<1$. Then, by Lemma~\ref{lemma:reachabilityandio}, we have that $\mathcal{P}_{\I}^{\bm{\pi}}(\{h\in \mathcal{H}^{\I}\,|\, \inft(h)\cap{\ISA^{\p}}\neq\emptyset\})<1$ for any policy $\bm{\pi}$. Therefore, by Theorem~\ref{thm:main}, no policy that achieves APD exists.

We next show that the policy synthesized by Algorithm~\ref{alg:APD} indeed achieves APD.
Note that the policy $\bm{\pi}^0$ achieves the reachability probability  $\mathbb{P}_{s_{\init},M^{\I}}^{\bm{\pi}^0}(\textup{reach}(\mathcal{C}^{\I}(M^{\I})))=\mathbb{P}_{s_{\init},M^{\I}}^{\textup{max}}(\textup{reach}(\mathcal{C}^{\I}(M^{\I})))=1$ outside of the informative MECs, and under $\bm{\pi}^{C}$ for any informative  MEC $C\in\mathcal{C}^{\I}(M^{\I})$, we have that $\mathcal{P}^{\bm{\pi}^{C}}_{\I}(\mathcal{H}_{\D}\,|\,\textup{reach}(C))=1$. Therefore,  we conclude that
 \begin{multline*}
     \mathcal{P}^{\bm{\pi}}_{\I}(\mathcal{H}_{\D})\\=\sum_{C\in\mathcal{C}^{\I}(M^{\I})}\mathbb{P}_{s_{\init},M^{\I}}^{\bm{\pi}^0}(\textup{reach}(C))\cdot\mathcal{P}^{\bm{\pi}^{C}}_{\I}(\mathcal{H}_{\D}\,|\,\textup{reach}(C))=1.
 \end{multline*}
By Theorem~\ref{thm:main}, APD is achieved.	
\end{proof}

\subsection{Proof of Lemma~\ref{lema:expconvergence}}\label{sec:proofexpconvergence}
Inspired by~\cite{DK:78}, we first present a compact formula to compute the BC for a binary MMDP under a stationary policy.
\begin{lema}[Computation of the BC via matrix multiplication]
Given a binary MMDP $\mathcal{M}=\{M_1,M_2\}$ and a stationary policy $\bm{\pi}$, the BC $B(t,\bm{\pi})$ can be computed by
\begin{equation}\label{eq:BCasmatrixproduct}
B(t,\bm{\pi})=\mathbbb{e}_{s_{\init}}^\top W^t\mathbbb{1}_n,
\end{equation}
where the $(i,j)$-th element $W_{ij}$ of the matrix  $W\in\mathbb{R}^{|\mathcal{S}|\times |\mathcal{S}|}$ is defined by
\begin{equation}\label{eq:matrixQ}
W_{ij}=\sum_{a\in\mathcal{A}_i}\bm{\pi}(a\,|\,i)\sqrt{\delta_1(j\,|\,i,a)\delta_2(j\,|\,i,a)}.
\end{equation}
\end{lema}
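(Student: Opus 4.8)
The plan is to exploit the product structure that a stationary policy imposes on history probabilities, and to package the one-step contributions into the matrix $W$. First I would fix a stationary policy $\bm{\pi}=(\pi,\pi,\cdots)$ and recall that for a history $h_t=(s_0,a_0,\cdots,s_t)$ with $s_0=s_{\init}$, the probability in $M_k$ for $k\in\{1,2\}$ factorizes as $\mathbb{P}_k^{\bm{\pi}}(h_t)=\prod_{\tau=0}^{t-1}\bm{\pi}(a_\tau\,|\,s_\tau)\,\delta_k(s_{\tau+1}\,|\,s_\tau,a_\tau)$. Taking the geometric mean of the two probabilities then yields $\sqrt{\mathbb{P}_1^{\bm{\pi}}(h_t)\mathbb{P}_2^{\bm{\pi}}(h_t)}=\prod_{\tau=0}^{t-1}\bm{\pi}(a_\tau\,|\,s_\tau)\sqrt{\delta_1(s_{\tau+1}\,|\,s_\tau,a_\tau)\,\delta_2(s_{\tau+1}\,|\,s_\tau,a_\tau)}$, where the policy factors survive the square root because they are common to both MDPs. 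This already exposes the entries of $W$: summing a single one-step factor over the action $a_\tau$ taken at $s_\tau$ produces precisely $W_{s_\tau,s_{\tau+1}}$ as defined in~\eqref{eq:matrixQ}.

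Rather than summing over all histories at once, I would carry out the argument by induction on $t$, tracking the terminal state. Introduce the vector $v_t\in\mathbb{R}^{|\mathcal{S}|}$ with entries $v_t(s)=\sum_{h_t\in\mathcal{H}_t:\,h_t(t)=s}\sqrt{\mathbb{P}_1^{\bm{\pi}}(h_t)\mathbb{P}_2^{\bm{\pi}}(h_t)}$, so that $B(t,\bm{\pi})=\sum_{s\in\mathcal{S}}v_t(s)=v_t^\top\mathbbb{1}_n$. The base case $t=0$ is immediate: the only length-zero history is $s_{\init}$, which occurs with probability one in both MDPs, hence $v_0=\mathbbb{e}_{s_{\init}}$. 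For the inductive step, every history of length $t+1$ ending at $s'$ arises from a unique history of length $t$ ending at some $s$ by appending an action $a$ and the state $s'$; using the factorization above and summing over $a$ gives the recursion $v_{t+1}(s')=\sum_{s\in\mathcal{S}}v_t(s)\,W_{s,s'}$, that is, $v_{t+1}^\top=v_t^\top W$.

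Finally, iterating this recursion gives $v_t^\top=v_0^\top W^t=\mathbbb{e}_{s_{\init}}^\top W^t$, and contracting with $\mathbbb{1}_n$ yields $B(t,\bm{\pi})=v_t^\top\mathbbb{1}_n=\mathbbb{e}_{s_{\init}}^\top W^t\mathbbb{1}_n$, which is the claimed formula. I do not anticipate a genuine obstacle: the computation is routine once stationarity is invoked, since that is exactly what makes the one-step matrix $W$ independent of $\tau$ and allows the telescoping product to collapse into the $t$-th power. The only point demanding care is bookkeeping — decomposing the sum over histories as an outer sum over the terminal state together with an inner sum over the appended action (which is absorbed into the entry $W_{s,s'}$), and confirming that the common policy weights pass intact through the square root so that they do not distort the entries of $W$.
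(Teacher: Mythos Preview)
Your proposal is correct and follows essentially the same approach as the paper: both define the quantity $v_t(s)=\sum_{h_t(t)=s}\sqrt{\mathbb{P}_1^{\bm{\pi}}(h_t)\mathbb{P}_2^{\bm{\pi}}(h_t)}$ (the paper writes it as $\mathbbb{e}_{s_{\init}}^\top W^t\mathbbb{e}_{s}$), establish $v_0=\mathbbb{e}_{s_{\init}}$, prove the one-step recursion $v_{t+1}^\top=v_t^\top W$ by decomposing length-$(t+1)$ histories according to the penultimate state and the final action, and then contract with $\mathbbb{1}_n$. The only cosmetic difference is that you front-load the factorization of $\sqrt{\mathbb{P}_1^{\bm{\pi}}(h_t)\mathbb{P}_2^{\bm{\pi}}(h_t)}$ before the induction, whereas the paper derives it implicitly inside the inductive step.
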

\begin{proof}
We first prove by induction that for any $s\in\mathcal{S}$ and $t\geq0$,
\begin{equation}\label{eq:product}
\mathbbb{e}_{s_{\init}}^\top W^t\mathbbb{e}_{s}=\sum_{h_t(0)=s_{\init},h_t(t)=s}\sqrt{\mathbb{P}_1^{\bm{\pi}}(h_t)\mathbb{P}_2^{\bm{\pi}}(h_t)}.
\end{equation}	
When $t=0$, \eqref{eq:product} holds, i.e., both sides of \eqref{eq:product} are one when $s=s_{\init}$ and zero otherwise. Suppose  \eqref{eq:product} holds for $t=\tau$. When $t=\tau +1$, we have
\begin{align*}
&\sum_{h_{\tau+1}(0)=s_0,h_{\tau+1}(\tau+1)=s}\sqrt{\mathbb{P}_1^{\bm{\pi}}(h_{\tau+1})\mathbb{P}_2^{\bm{\pi}}(h_{\tau+1})}\\
&=\sum_{s'\in\mathcal{S}}\sum_{h_{\tau}(0)=s_0,h_{\tau}(\tau)=s'}\sqrt{\mathbb{P}_1^{\bm{\pi}}(h_{\tau})\mathbb{P}_2^{\bm{\pi}}(h_{\tau})}\\
&\qquad\qquad\qquad\quad\cdot\sum_{a\in\mathcal{A}_{s'}}\bm{\pi}(a\,|\,s')\sqrt{\delta_{1}(s\,|\,s',a)\delta_{2}(s\,|\,s',a)}\\
&=\sum_{s'\in\mathcal{S}}\mathbbb{e}_{s_{\init}}^\top W^t\mathbbb{e}_{s'}\cdot W_{s's}\\
&=\mathbbb{e}_{s_{\init}}^\top W^{t+1}\mathbbb{e}_{s},
\end{align*}
where the first equality is due to the fact that the policy is stationary and Markovian, and the second equality follows from the induction hypothesis and the definition of $W$.

The conclusion~\eqref{eq:BCasmatrixproduct} then follows from the observation that
\begin{align*}
B(t,\bm{\pi})&=\sum_{h_t\in \mathcal{H}_t}\sqrt{\mathbb{P}_1^{\bm{\pi}}(h_t)\mathbb{P}_2^{\bm{\pi}}(h_t)}\\
&=\sum_{s\in\mathcal{S}}\sum_{h_t(0)=s_0,h_t(t)=s}\sqrt{\mathbb{P}_1^{\bm{\pi}}(h_t)\mathbb{P}_2^{\bm{\pi}}(h_t)}\\
&=\sum_{s\in\mathcal{S}}\mathbbb{e}_{s_{\init}}^\top W^t\mathbbb{e}_{s}=\mathbbb{e}_{s_{\init}}^\top W^t\mathbbb{1}_n.
\end{align*}
\end{proof}

Now we present the proof of Lemma~\ref{lema:expconvergence}.
\begin{proof}[Proof of Lemma~\ref{lema:expconvergence}]
We first note that the matrix $W$ defined by~\eqref{eq:matrixQ} is non-negative and has row sum less than or equal to one.  By~\cite[Theorem 4.11]{FB:20}, the spectral radius of $W$ is less than or equal to one.

Let the set of eigenvalues of $W$ be $\{\lambda_i\}_{i\in\mathbb{N}_{1}^{|\mathcal{S}|}}$ where $|\lambda_i|\leq1$ for all $i\in\mathbb{N}_{1}^{|\mathcal{S}|}$. Then, by the Jordan decomposition of $W$, we have
 \begin{equation}\label{eq:BCJordan}
\mathbbb{e}_{s_{\init}}^\top W^t\mathbbb{1}_{|\mathcal{S}|}=\sum_{i=1}^{|\mathcal{S}|}\sum_{k=0}^{t-1}c_{ik}t^k\lambda_i^{t-k},
\end{equation}
where $c_{ik}$ are constant coefficients. Since APD is achieved for $\mathcal{M}$ under the policy $\bm{\pi}$, by Lemma~\ref{lema:APDgeneral}, the BC must vanish, i.e., 
\begin{equation*}
	\lim_{t\rightarrow\infty}\mathbbb{e}_{s_{\init}}^\top W^t\mathbbb{1}_{|\mathcal{S}|}=\lim_{t\rightarrow\infty}\sum_{i=1}^{|\mathcal{S}|}\sum_{k=0}^{t-1}c_{ik}t^k\lambda_i^{t-k}=0.
\end{equation*}
By the form~\eqref{eq:BCJordan}, the BC must converge exponentially fast.
\end{proof}

\subsection{Proof of Lemma~\ref{lema:BCgeneral}}\label{appendix:BCgeneral}
Let $R_i=\{z\in\mathbb{R}^n\,|\,q_if_i(z)\geq q_jf_j(z) \textup{ for all }j\in\mathbb{N}_1^N\}$. Then the probability of error is given by
\begin{align*}
	P_{\textup{error}}&=\sum_{i=1}^N\theta_i	\sum_{j\neq i}\int_{R_j}f_i(z)dz\\
	&=\sum_{i=1}^N\frac{\theta_i}{q_i}	\sum_{j\neq i}\int_{R_j}q_if_i(z)dz\\
		&\leq\max_{i}\{\frac{\theta_i}{q_i}\}\cdot\sum_{i=1}^N	\sum_{j\neq i}\int_{R_j}q_if_i(z)dz.
\end{align*}
The upper bound on the probability of error then follows from~\cite[Theorem 6]{KM:71}.


Our proof of the lower bound is inspired by that for the binary case in~\cite[Appendix A]{TTK-LAS:67}. 
By the Cauchy-Schwarz inequality, for any measurable set $R$ in the Borel $\sigma$-algebra on $\mathbb{R}^n$ and $i,j\in\mathbb{N}_{1}^N$, we have
\begin{align}\label{eq:CS}
	\begin{split}
	\int_R \sqrt{f_i(z)f_j(z)}dz&\leq\sqrt{\int_R f_i(z)dz\int_R f_j(z)dz}\\
&\leq \sqrt{\int_R f_k(z)dz}~~~\textup{for any }k\in\{i,j\}.
	\end{split}
\end{align}
Therefore, we have
\begin{align}\label{eq:BijCS}
	\begin{split}
&\quad\min\{\theta_i,\theta_j\}	B_{ij}^2\\
&=\min\{\theta_i,\theta_j\}(\int_R \sqrt{f_i(z)f_j(z)}dz+\int_{\overline{R}} \sqrt{f_i(z)f_j(z)}dz)^2\\
&\leq\min\{\theta_i,\theta_j\} ( \sqrt{\int_R f_i(z)dz}+ \sqrt{\int_{\overline{R}} f_j(z)dz})^2\\
&\leq( \sqrt{\theta_i\int_R f_i(z)dz}+ \sqrt{\theta_j\int_{\overline{R}} f_j(z)dz})^2\\	
&=\theta_i\int_R f_i(z)dz+\theta_j\int_{\overline{R}} f_j(z)dz\\
&\quad+2\sqrt{\theta_i\theta_j\int_R f_i(z)dz\int_{\overline{R}} f_j(z)dz}\\
&\leq 2(\theta_i\int_R f_i(z)dz+\theta_j\int_{\overline{R}} f_j(z)dz),
	\end{split}
\end{align}
where the third line follows from~\eqref{eq:CS}. Fix any $k\in\mathbb{N}_{1}^N$, we then bound the probability of error from below as follows,
\begin{align*}
	P_{\textup{error}}
	&=\sum_{i=1}^N\theta_i	\sum_{j\neq i}\int_{R_j}f_i(z)dz\\
	&=\sum_{i\neq k}	\big(\theta_i\sum_{j\neq i}\int_{R_j}f_i(z)dz+\theta_k\int_{R_i}f_k(z)dz\big)\\
	&\geq \sum_{i\neq k}\frac{1}{2}\min\{\theta_i,\theta_k\}B_{ik}^2,
\end{align*}
where the last inequality follows from~\eqref{eq:BijCS}. Thus, we have the following lower bound for the probability of error,
\begin{equation*}
 P_{\textup{error}}\geq	\frac{1}{2}\max_{k\in\mathbb{N}_{1}^N}\{\sum_{i\neq k}\min\{\theta_i,\theta_k\}B_{ik}^2\}.
\end{equation*}
We note that the lower bound derived in~\cite[Section 2]{SK:76} also has the property that it is zero if and only if the pairwise BCs are zero and thus serves our purpose. 

\subsection{Proof of Theorem~\ref{thm:alggeneral}}\label{sec:proofalggeneral}

We first present a lemma that connects the MECs in the transition system $T$ with those in the informative MDPs.
\begin{lema}[MECs of the transition system and the informative MDPs]\label{lema:MECinTandMI}
Given an MMDP $\mathcal{M}=\{M_i\}_{i\in\mathcal{N}}$ and the transition system $T$ built by Algorithm~\ref{alg:APDgeneral}, if $C\in\mathcal{C}(T)$ is an MEC in $T$ and $C\notin\{(\{\bot_1^{\g}\},\{a^{\bot_1^{\g}}\}),(\{\bot_0^{\g}\},\{a^{\bot_0^{\g}}\})\}$, then $C$ is also an MEC in the informative MDP $M^{\I}_{ij}$ for any pair of MDPs $M_i,M_j\in\mathcal{M}$.
\end{lema}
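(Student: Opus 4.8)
The plan is to verify the two defining properties of a maximal end component for $C=(\mathcal{X},\mathcal{U})$ inside $M^{\I}_{ij}$ in turn: first that $C$ is an end component of $M^{\I}_{ij}$, and then that it is maximal there. I would begin by locating the states of $C$. Since the only outgoing transitions of $\bot_0^{\g}$ and $\bot_1^{\g}$ are self-loops, any end component containing one of these states must be the corresponding singleton; as $C$ is neither of the two excluded MECs, it contains no terminal state, so $\mathcal{X}\subset\mathcal{S}$.

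The heart of the end-component direction is a common-support observation. Fix $s\in\mathcal{X}$ and $u\in\mathcal{U}_s$. Condition~\ref{itm:nextstates} of Definition~\ref{def:MEC}, applied in $T$, forces every $T$-successor of $(s,u)$ to lie in $\mathcal{X}\subset\mathcal{S}$, so none of them is a terminal state. By the construction of $T$ in the BFS phase of Algorithm~\ref{alg:APDgeneral}, a transition $(s,u,s')$ is recorded as a within-$\mathcal{S}$ transition precisely when $\delta_l(s'\,|\,s,u)>0$ for every $l\in\mathcal{N}$, while every other outgoing transition of $(s,u)$ is redirected to a terminal state. Hence $(s,u)$ must have one and the same support $\Supp(\delta_l(\cdot\,|\,s,u))$ for all $l\in\mathcal{N}$, and this common support lies in $\mathcal{X}$. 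Because the supports of $\delta_i$ and $\delta_j$ at $(s,u)$ then coincide, the pairwise preprocessing of $\{M_i,M_j\}$ sends no probability mass to its terminal states $\bot_1,\bot_2$, so $\Supp(\delta^{\p}_i(\cdot\,|\,s,u))=\Supp(\delta^{\p}_j(\cdot\,|\,s,u))$ equals the common support; since $\delta^{\I}_{ij}=\gamma\delta^{\p}_i+(1-\gamma)\delta^{\p}_j$ with $\gamma\in(0,1)$ has support equal to the union $\Supp(\delta^{\p}_i(\cdot\,|\,s,u))\cup\Supp(\delta^{\p}_j(\cdot\,|\,s,u))$, it follows that $\Supp(\delta^{\I}_{ij}(\cdot\,|\,s,u))\subset\mathcal{X}$. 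Thus the within-$\mathcal{X}$ successor structure of $M^{\I}_{ij}$ coincides with that of $T$, conditions (i)--(iv) of Definition~\ref{def:MEC} transfer verbatim, and $C$ is an end component of $M^{\I}_{ij}$.

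For maximality I would argue by contradiction, assuming an end component $\tilde{C}=(\tilde{\mathcal{X}},\tilde{\mathcal{U}})$ of $M^{\I}_{ij}$ with $C\subsetneq\tilde{C}$. Strong connectivity, condition~\ref{itm:strongconnect} of Definition~\ref{def:MEC}, forbids $\tilde{\mathcal{X}}$ from containing the absorbing terminals $\bot_1,\bot_2$, so every action used by $\tilde{C}$ has coinciding supports for $M_i$ and $M_j$. The goal is to show that the extra states and actions of $\tilde{C}$ correspond to within-$\mathcal{S}$ transitions of $T$, producing an end component of $T$ strictly larger than $C$ and contradicting the maximality of $C$ in $T$. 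I expect this to be the main obstacle: the delicate point is to upgrade ``the supports coincide for the pair $\{M_i,M_j\}$'' to ``the supports coincide for every MDP in $\mathcal{M}$,'' which is exactly the condition under which the BFS retains the corresponding transitions inside $\mathcal{S}$ rather than redirecting them to a terminal. Establishing this upgrade — thereby ruling out any extension built from transitions common to the pair but not to all of $\mathcal{M}$ — is where the real work lies; once it is in place, the remaining bookkeeping (that the lifted state-action set is again closed under its transitions and strongly connected, hence an end component of $T$) is routine.
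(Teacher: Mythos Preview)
Your end-component half is correct and is exactly what the paper does: it records that for $s\in\mathcal{X}$ and $u\in\mathcal{U}_s$, a transition $(s,u,s')$ is possible in one $M_l$ iff it is possible in every $M_l$ (and that $(s,u)$ has no $T$-successor among the terminals), so the successor structure at every $(s,u)\in C$ is identical in $T$ and in $M^{\I}_{ij}$, and $C$ is an end component of $M^{\I}_{ij}$. Your treatment is in fact more careful than the paper's two-sentence argument.

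Your worry about maximality is well placed, but the ``upgrade'' you isolate cannot be carried through in general: a transition common to $M_i$ and $M_j$ need not be common to all of $\mathcal{M}$, so a pairwise end component extending $C$ need not correspond to anything in $T$. Concretely, take $|\mathcal{N}|=3$, states $s_1,s_2,s_3$, actions $a,a'$ at $s_1$, $b$ at $s_2$, $c$ at $s_3$; let $\delta_l(s_2\,|\,s_1,a)=\delta_l(s_1\,|\,s_2,b)=1$ for all $l$, let $\delta_1(\cdot\,|\,s_1,a')=\delta_2(\cdot\,|\,s_1,a')$ put all mass on $s_3$ while $\delta_3(\cdot\,|\,s_1,a')$ puts it on $s_2$, and let $(s_3,c)$ have support $\{s_1,s_3\}$ in all three MDPs but $\delta_1(\cdot\,|\,s_3,c)\neq\delta_2(\cdot\,|\,s_3,c)$. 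Then $C=(\{s_1,s_2\},\{a,b\})$ is an MEC of $T$ (both $T$-successors of $(s_1,a')$ are redirected to terminals), yet in $M^{\I}_{12}$ the strictly larger $(\{s_1,s_2,s_3\},\{a,a',b,c\})$ is an end component, so $C$ is not maximal there. The paper's own proof does not address maximality either; it stops at the common-support observation and simply asserts the MEC conclusion. For the only place the lemma is used, the proof of Theorem~\ref{thm:alggeneral}, what is actually needed is that $C$ is an end component of $M^{\I}_{ij}$ containing a state-action pair in $\ISA_{ij}$---both of which your argument already delivers.
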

\begin{proof}
Since $C=(\mathcal{X},\mathcal{U})\in\mathcal{C}(T)$ is an MEC in $T$ and $C\notin\{(\{\bot_1^{\g}\},\{a^{\bot_1^{\g}}\}),(\{\bot_0^{\g}\},\{a^{\bot_0^{\g}}\})\}$, for any $s,s'\in\mathcal{X}$ and $u\in\mathcal{U}_s$, we have that $\delta_{i'}(s'\,|\,s,u)>0$ for some $i'\in\mathcal
N$ implies that $\delta_{i}(s'\,|\,s,u)>0$ for all $i\in\mathcal{N}$. Moreover, we also have $\delta_{i}(\bot_j^{\g}\,|\,s,u)=0$ for $j\in\{0,1\}$ and $i\in\mathcal{N}$. Therefore, we conclude that $C$ is also an MEC in the informative MDP $M^{\I}_{ij}$ for any pair of MDPs $M_i$ and $M_j$.
\end{proof}
\begin{proof}[Proof of Theorem~\ref{thm:alggeneral}]

We prove the finite termination and correctness of Algorithm~\ref{alg:APDgeneral} by induction.

\subsubsection{Finite termination}
When the input MMDP $\{M_i\}_{i\in\mathcal{N}}$ is binary, i.e., $|\mathcal{N}|=2$, Algorithm~\ref{alg:APDgeneral} calls Algorithm~\ref{alg:APD}, and by Theorem~\ref{thm:alg}, we conclude that Algorithm~\ref{alg:APDgeneral} terminates in finite time.  Suppose Algorithm~\ref{alg:APDgeneral} terminates in finite time when the input MMDP $\{M_i\}_{i\in\mathcal{N}}$ consists of $|\mathcal{N}|\leq K$ MDPs. For an input MMDP $\{M_i\}_{i\in\mathcal{N}}$ with $|\mathcal{N}|=K+1$ MDPs, if Algorithm~\ref{alg:APDgeneral} does not call itself, then we are in the situation discussed in Section~\ref{sec:basecase} and the modified Algorithm~\ref{alg:APD} terminates in finite time. Otherwise, by the induction hypothesis, all the recursive calls terminate in finite time and there are finitely many of them. Thus, we conclude that 
Algorithm~\ref{alg:APDgeneral} terminates in finite time when the input MMDP consists of $|\mathcal{N}|=K+1$ MDPs.

\subsubsection{Correctness} When the input MMDP $\{M_i\}_{i\in\mathcal{N}}$ is binary, i.e., $|\mathcal{N}|=2$, Algorithm~\ref{alg:APDgeneral} calls Algorithm~\ref{alg:APD}, and by Theorem~\ref{thm:alg}, we conclude that Algorithm~\ref{alg:APDgeneral} is correct.
	
Suppose Algorithm~\ref{alg:APDgeneral} is correct when the input MMDP $\{M_i\}_{i\in\mathcal{N}}$ consists of $|\mathcal{N}|\leq K$ MDPs. 
			
Let $\{M_i\}_{i\in\mathcal{N}}$ be an input MMDP with $|\mathcal{N}|=K+1$ MDPs. If Algorithm~\ref{alg:APDgeneral} dose not call itself, then by Lemma~\ref{lema:basecase}, Algorithm~\ref{alg:APDgeneral} is correct. Otherwise, we show that a policy that achieves APD for $\{M_i\}_{i\in\mathcal{N}}$ exists if and only if $s_{\init}\in\mathcal{R}^{\max}$, where $\mathcal{R}^{\max}$ is defined in line~\ref{alg2:Rmax} of Algorithm~\ref{alg:APDgeneral}.

Suppose $s_{\init}\in\mathcal{R}^{\max}$. Note that the set of informative MECs $\mathcal{C}^{\I}(T)$ of the transition system $T$ consists of two types of states: i) $A_1=\{\bot^{\g}_1\}$; ii) $A_2=\{s\in\mathcal{S}\,|\,s\in\mathcal{C}^{\I}(T)\}$. Since $s_{\init}\in\mathcal{R}^{\max}$, following the policy $\bm{\pi}_{\mathcal{N}}^0$ guarantees that the union of $A_1$ and $A_2$ is reached with probability $1$ in $T$. Let $M_i, M_j\in\{M_i\}_{i\in\mathcal{N}}$ be an arbitrary pair of MDPs. Then, By Lemma~\ref{lema:MECinTandMI} and the definition of $\mathcal{C}^{\I}(T)$ in~\eqref{eq:informativeMECbase}, we know that the set of states $A_2$ also constitute informative MECs in $M^{\I}_{ij}$. On the other hand, reaching $A_1$ in $T$ could correspond to the following situations in $M^{\I}_{ij}$:
\begin{enumerate}[label=(\roman*)]
	\item if $(s,a,\bot^{\g}_1)$ comes from the transition $(s,a,s')$ during the BFS where  $\delta_{i}(s'\,|\,s,a)\delta_{j}(s'\,|\,s,a)=0$, then $s'$ is reachable from $(s,a)$ in at most one of $M_i$ and $M_j$. In this case, the transition $(s,a,s')$ either does not exist in $M^{\I}_{ij}$ or is replaced by $(s,a,\bot_k)$ for $k\in\{1,2\}$ in $M^{\I}_{ij}$. 
	\item if $(s,a,\bot^{\g}_1)$ comes from the transition $(s,a,s')$ during the BFS where  $\delta_{i}(s'\,|\,s,a)\delta_{j}(s'\,|\,s,a)>0$, then, by the induction hypothesis, $\bot^{\g}_1$ indicates that there exists a policy such that APD is achieved for the set of MDPs consisting of $M_i$ and $M_j$.
\end{enumerate}
In all of the above cases, we have that APD is achieved for the pair $M_i$ and $M_j$ in $\{M_i\}_{i\in\mathcal{N}}$. Then, by Lemma~\ref{lema:BCgeneral}, APD is achieved for $\{M_i\}_{i\in\mathcal{N}}$.

Suppose $s_{\init}\notin\mathcal{R}^{\max}$. Then, following any policy in $T$ from $s_{\init}$ results in a strictly positive probability of reaching the union of the states in $A_1'$ and $A_2'$ where $A_1'=\{\bot_0^{\g}\}$ and $A_2'=\{s\in\mathcal{S}\,|\,s\in\mathcal{C}(T)\setminus\mathcal{C}^{\I}(T)\}$. We consider two scenarios:
\begin{enumerate}[label=(\roman*)]
\item suppose following any policy in $T$, the probability of reaching an MEC $C$ composed of states in $A_2'$ from $s_{\init}$  is strictly positive. Consider the pair of MDPs $M_i$ and $M_j$ such that $C$ is not an informative MDP in $M^{\I}_{ij}$ (such a pair must exist, otherwise $C\in\mathcal{C}^{\I}(T)$). Then, in the informative MDP $M_{ij}^{\I}$, the probability of reaching the set of informative MDPs must be strictly less than $1$, which, by Theorem~\ref{thm:alg}, implies that there does not exist a policy that achieves APD for $M_i$ and $M_j$; 
\item suppose following any policy in $T$, the probability of reaching $A_1'$ from $s_{\init}$ is strictly positive. If $(s,a,\bot_{0}^{\g})$ replaces the transition $(s,a,s')$ as the result of the recursive call $\texttt{APD}(\{M_i\}_{i\in\mathcal{N}'},s',\bm{\Pi})$, then by the induction hypothesis, there does not exist policies under which APD is achieved for $\{M_i\}_{i\in\mathcal{N}'}$. In other words, there exists a pair of $M_i$ and $M_j$ for $i,j\in\mathcal{N}'$ such that $B_{ij}(\bm{\pi})>0$ for any $\bm{\pi}$.
\end{enumerate}
In all above cases, there does not exist a policy that achieves APD for $\{M_i\}_{i\in\mathcal{N}}$.

\end{proof}

\bibliographystyle{IEEEtran}
\bibliography{alias,mybib}

\end{document}